\theoremstyle{plain}
\newtheorem{thm}{Theorem}
\newtheorem{cor}{Corollary}
\newtheorem{lemma}{Lemma}
\newtheorem{prop}{Proposition}
\theoremstyle{definition}
\theoremstyle{remark}
\newcommand{\BC}{{\mathbb{C}}}
\newcommand{\BL}{{\mathbb{L}}}
\newcommand{\BQ}{{\mathbb{Q}}}
\newcommand{\BR}{{\mathbb{R}}}
\newcommand{\BZ}{{\mathbb{Z}}}
\newcommand{\CA}{{\mathcal A}}
\newcommand{\CE}{{\mathcal E}}
\newcommand{\CF}{{\mathcal F}}
\newcommand{\CH}{{\mathcal H}}
\newcommand{\CK}{{\mathcal K}}
\newcommand{\CL}{{\mathcal L}}
\newcommand{\CM}{{\mathcal M}}
\newcommand{\CO}{{\mathcal O}}
\newcommand{\CP}{{\mathcal P}}
\newcommand{\CT}{{\mathcal T}}
\DeclareMathOperator{\Hilb}{Hilb}
\DeclareMathOperator{\id}{id}
\newcommand{\Spec}{\mathop{\rm Spec}\nolimits}
\newcommand{\Pic}{\mathop{\rm Pic}\nolimits}
\newcommand{\p}{\mathbb{P}}
\newcommand{\pt}{{\mathsf{p}}}
\newcommand{\ch}{{\mathrm{ch}}}
\newcommand{\tr}{\mathrm{tr}}
\newcommand{\sr}{\mathsf{sr}}
\newcommand{\Ered}{E_{\text{red}}}
\newcommand{\Chow}{\mathrm{Chow}}
\newcommand{\Coh}{\mathrm{Coh}}
\newcommand{\Ex}{{\curly E\!x}}
\DeclareFontFamily{OT1}{rsfs}{}
\DeclareFontShape{OT1}{rsfs}{n}{it}{<-> rsfs10}{}
\DeclareMathAlphabet{\curly}{OT1}{rsfs}{n}{it}
\renewcommand\hom{\curly H\!om}
\newcommand\ext{\curly Ext}
\newcommand\Ext{\operatorname{Ext}}
\newcommand\Hom{\operatorname{Hom}}
\newcommand\At{\operatorname{At}}
\newcommand\II{\mathbb I}
\newcommand*\dd{\mathop{}\!\mathrm{d}}
\newcommand{\Cone}{\mathop{\rm Cone}\nolimits}
\begin{document}
\baselineskip=14.5pt
\author{Georg Oberdieck}
\title{On reduced stable pair invariants}
\address {MIT, Department of Mathematics}
\email{georgo@mit.edu}
%\date{June 2017}
\date{\today}
\maketitle

\begin{abstract}
Let $X = S \times E$ be the product of a K3 surface $S$ and an elliptic curve $E$.
Reduced stable pair invariants of $X$ can be defined
via (1) cutting down the reduced virtual class with incidence conditions or
(2) the Behrend function weighted Euler characteristic
of the quotient of the moduli space by the translation action of $E$.
We show that (2) arises naturally as the degree of a virtual class,
and that the invariants (1) and (2) agree.
This has applications to deformation invariance, rationality and a DT/PT correspondence for reduced invariants of $S \times E$.
\end{abstract}

\setcounter{tocdepth}{1} 
\tableofcontents
\setcounter{section}{-1}

\section{Introduction}
\subsection{Pandharipande--Thomas theory}
Let $X$ be a smooth projective threefold
which is Calabi--Yau, i.e. has canonical class $K_X=0$.
The moduli space $P_n(X,\beta)$ parametrizing stable pairs\footnote{
A stable pair $(F,s)$ on $X$ is a pure $1$-dimensional sheaf $F$
on $X$ together with a section $s \in H^0(X,F)$ with $0$-dimensional co-kernel \cite{PT1}.}
$(F,s)$ on $X$ of numerical type
\[
\chi(F)=n\in \mathbb{Z},
\quad [C]= \beta \in H_2(X,\mathbb{Z})
\]
carries a $0$-dimensional virtual
class $[ P_n(X, \beta) ]^{\text{vir}}$
obtained from the perfect obstruction theory of stable pairs \cite{PT1, HL}.
By a result of Behrend \cite{B} its degree can be computed as the topological Euler characteristic of the moduli space
weighted by the Behrend constructible function $\nu : P_n(X, \beta) \to \BZ$,
\begin{equation} \label{dfgsdgsdfF}
\int_{[ P_n(X, \beta)]^{\text{vir}}} 1\  = \ \sum_{k \in \BZ} k \cdot e\left( \nu^{-1}(k) \right).
\end{equation}
The value of \eqref{dfgsdgsdfF}
is the \emph{Pandharipande--Thomas (PT) invariant} $\mathsf{N}_{n,\beta}$ and
encodes information about the number and type of algebraic curves on $X$.

Both sides of \eqref{dfgsdgsdfF} are useful in proving properties of PT invariants.
Via the left hand side (the virtual class side) the $\mathsf{N}_{n,\beta}$
are shown to be invariant under deformations of $X$.
It is also used in the proof of the correspondence between Pandharipande--Thomas and Gromov--Witten theory \cite{PaPix2}.
The right hand side (the Euler characterstic side)
makes PT invariants accessible to cut-and-paste techniques and wall-crossing in the motivic Hall algebra.
This was used by Bridgeland and Toda to prove a rationality result and the correspondence to Donaldson--Thomas (DT) theory \cite{Br1, T16}.

Let $S$ be a nonsingular projective $K3$ surface, and let
$E$ be a nonsingular elliptic curve. Consider the Calabi-Yau threefold
$$X=S \times E \,.$$
Let $\beta\in \text{Pic}(S) \subset H_2(S,\mathbb{Z})$ be a non-zero curve class
%which is positive with respect to any ample polarization, 
and let $d$ be a non-negative integer.
The pair $(\beta,d)$
determines a class in
$H_2(X,\mathbb{Z})$ by
\[ (\beta,d) = \iota_{S \ast}( \beta ) + \iota_{E \ast}(d [E] ) \]
where $\iota_S : S \hookrightarrow X$ and $\iota_E : E \hookrightarrow X$
are inclusions of fibers of the projections to $E$ and $S$ respectively.
%Let also $n \in \BZ$ be an integer.
%
We are interested in the Pandharipande--Thomas theory of $X$.
However, since $S$ is holomorphic symplectic, the virtual fundamental class vanishes:
\[ [ P_n(X, (\beta,d)) ]^{\text{vir}} = 0. \]
The left hand side of \eqref{dfgsdgsdfF} is zero.
Similarly, the elliptic curve $E$ acts on $X$ by translation in the second factor
and hence on the moduli space $P_n(X,(\beta,d))$ with $1$-dimensional orbits.
Since the Euler characteristic of $E$ vanishes
%and the Behrend function is invariant under the $E$-action,
we conclude also the right hand side of \eqref{dfgsdgsdfF} is zero.
The Pandharipande--Thomas theory of $X$ is trivial\footnote{We ignore here the case $\beta = 0$}.
%However, physics predicts a non-trivial counting theory of $X$.

A non-trivial \emph{reduced} Pandharipande--Thomas theory of $X$ case can be defined in two different ways.

\vspace{5pt}
\noindent (i) Reducing the perfect-obstruction theory by the semi-regularity map
 yields a reduced $1$-dimension virtual cycle
\[ [ P_n(X,(\beta,d)) ]^{\text{red}}. \]
%of dimension $1$.
Let $\omega \in H^2(E, \BZ)$ be the class of a point and let
$\beta^{\vee} \in H^2(S, \BQ)$ be a class satisfying $\langle \beta, \beta^{\vee} \rangle = 1$.
\emph{Incidence Pandharipande--Thomas invariants} are defined by
\[
\widetilde{\mathsf{N}}^{X}_{n, (\beta,d)} = 
\int_{[P_{n}(X,(\beta,d))]^{\text{red}}} \tau_0( \pi_1^{\ast}(\beta^{\vee}) \cup \pi_2^{\ast}( \omega ) ) \,,
\]
where the insertion operator $\tau_0( \cdot )$ is defined in \cite{PT1},
and we let $\pi_i$ denote the projection from $S \times E$ to the $i$th factor.

The invariants $\widetilde{\mathsf{N}}^{X}_{n, (\beta,d)}$ are invariant under deformations of $X$ for which $(\beta,d)$ remains of Hodge type $(2,2)$.
If $\beta$ is primitive, a correspondence to the reduced Gromov--Witten theory of $X$ was established in \cite{K3xE}.

\vspace{5pt}
\noindent (ii) The $E$-action on the moduli space $P_n(X, (\beta,d))$ by translation
has finite stabilizer. The quotient stack
\[ P_n(X, (\beta,d)) \, / \, E \]
is therefore Deligne--Mumford.
Following a proposal of J.~Bryan \cite{Bryan-K3xE}
we define \emph{quotient Pandharipande--Thomas invariants} of $X$
as the topological Euler characteristic $e( \cdot )$ of the quotient (taken in the orbifold sense)
weighted by the Behrend function $\nu : P_n(X, (\beta,d)) / E \to \BZ$,
\begin{align*}
\mathsf{N}_{n,(\beta,d)}^{X} & \, = \,
\int_{P_n(X, (\beta,d)) / E} \nu \, \dd{e} \\
& = \, \sum _{k\in \BZ }k\cdot e\big(\nu ^{-1} (k) \big) \,.
\end{align*}

\vspace{7pt}
The main result of the paper is the following comparision
of the invariants defined in (i) and (ii) above.

\begin{thm} \label{4fijdsdf} The quotient and incidence Pandharipande--Thomas invariants of $X = S \times E$ agree:
\[
\mathsf{N}_{n, (\beta,d)}^{X} = \widetilde{\mathsf{N}}^{X}_{n, (\beta,d)}.
 \]
\end{thm}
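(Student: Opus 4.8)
The plan is to realize the quotient invariant (ii) as the degree of a genuine virtual class on the Deligne--Mumford quotient $\overline{P} := P_n(X,(\beta,d))/E$, and then to identify that degree with the incidence integral (i). Throughout write $P := P_n(X,(\beta,d))$ and let $q : P \to \overline{P}$ denote the quotient map.

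First I would construct a perfect obstruction theory of virtual dimension $0$ on $\overline{P}$. The translation action of $E$ on $X = S \times E$ fixes the $S$-factor, so it preserves the symplectic reduction coming from the semi-regularity map; the reduced obstruction theory on $P$ is therefore $E$-equivariant. Since the action has finite stabilizers, $q$ is smooth of relative dimension $1$ with trivial relative cotangent complex $L_{P/\overline{P}} \cong \CO_P$, the trivial line being spanned by the vector field generating the $E$-action. Using the exact triangle $q^{\ast} L_{\overline{P}} \to L_P \to L_{P/\overline{P}}$ I would descend the reduced theory on $P$ to a perfect obstruction theory on $\overline{P}$ by quotienting out this orbit tangent line. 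The essential point, which uses the product structure $X = S \times E$ crucially, is a dimension count: the full theory on the Calabi--Yau threefold is symmetric, the semi-regularity reduction removes one \emph{obstruction} (coming from $S$) and breaks self-duality, while passing to the quotient removes one \emph{deformation} (the $E$-orbit direction); the two operations compensate, restoring $\dim(\text{def}) = \dim(\text{obs})$. Hence $[\overline{P}]^{\text{vir}}$ is cut out by a \emph{symmetric} obstruction theory, and Behrend's theorem for Deligne--Mumford stacks gives
\[
\int_{[\overline{P}]^{\text{vir}}} 1 = \int_{\overline{P}} \nu\,\dd e = \mathsf{N}^{X}_{n,(\beta,d)},
\]
which is the assertion that (ii) arises naturally as the degree of a virtual class.

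Second, I would compare the two classes through $q$. Because the reduced class is $E$-invariant and the only new direction introduced by the descent is the orbit tangent line, the smooth pullback satisfies $[P]^{\text{red}} = q^{!}[\overline{P}]^{\text{vir}}$. For any cohomology class $c$ on $P$ the projection formula then yields $\int_{[P]^{\text{red}}} c = \int_{[\overline{P}]^{\text{vir}}} q_{\ast} c$, so it remains to show that the incidence insertion pushes forward to the fundamental class,
\[
q_{\ast}\, \tau_0\!\big(\pi_1^{\ast}(\beta^\vee) \cup \pi_2^{\ast}(\omega)\big) = 1 .
\]
This is a computation with the universal complex $\II$: the point class $\pi_2^{\ast}(\omega)$ on $E$ meets each $E$-orbit transversally and integrates out the fibre of $q$, while $\pi_1^{\ast}(\beta^\vee)$ supplies the normalising factor $\langle \beta, \beta^\vee \rangle = 1$ from the $S$-component of the support of the universal pair. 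Combining the two steps gives $\widetilde{\mathsf{N}}^{X}_{n,(\beta,d)} = \int_{[P]^{\text{red}}} \tau_0(\cdots) = \int_{[\overline{P}]^{\text{vir}}} 1 = \mathsf{N}^{X}_{n,(\beta,d)}$.

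The main obstacle I expect is the first step: descending a reduced obstruction theory --- which is not presented globally by a single complex mapping to $L_P$ in the naive way, but is produced by the semi-regularity cosection --- to an honest symmetric perfect obstruction theory on the stack quotient $\overline{P}$, compatibly with the $E$-equivariant structure, and proving the class identity $[P]^{\text{red}} = q^{!}[\overline{P}]^{\text{vir}}$. Once the descent and the self-duality count are in place, the insertion pushforward is a concrete Chern-class calculation on $\II$, and the final appeal to Behrend's theorem is standard for Deligne--Mumford stacks carrying a symmetric obstruction theory.
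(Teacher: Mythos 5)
Your overall architecture --- realize the quotient invariant as the degree of a virtual class on $P/E$ cut out by a symmetric obstruction theory, apply Behrend's theorem, pull back along the quotient map to compare with $[P]^{\text{red}}$, and then integrate out the insertion using $\pi_2^{\ast}(\omega)$ against the $E$-orbits and $\langle\beta,\beta^{\vee}\rangle=1$ --- is exactly the architecture of the paper, and your second step (the insertion pushforward) matches the paper's computation essentially verbatim. The gap is in your first step, and it is not a technicality you can defer: constructing a symmetric perfect obstruction theory \emph{directly on the stack quotient} $P/E$ is precisely what the paper does \emph{not} do, and is explicitly listed there as an open problem. The paper circumvents it by first building an isotrivial fibration $p_{\mathsf{c}}\colon P\to E$ (via a Fourier--Mukai transform twisted by a line bundle from the K3, which is where $\beta\neq 0$ enters), taking the fiber $\CK=p_{\mathsf{c}}^{-1}(0_E)$, which is a finite \'etale cover of $P/E$ by an honest projective \emph{scheme}, and restricting the relevant complex to $\CK$. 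The virtual class on $P/E$ is then \emph{defined} as $\frac{1}{|G|}\pi_{\ast}[\CK]^{\text{vir}}$, and its independence of the choice of fibration is checked via Siebert's formula and Fulton Chern classes. Without some such device you have no scheme (or stack with a constructed obstruction theory) on which to run Behrend's theorem.

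A second, related gap is your claim that removing one obstruction (semi-regularity) and one deformation (the orbit direction) ``restores self-duality.'' This is the right heuristic, but it is genuinely nontrivial to make rigorous: the descended complex is defined as an iterated cone, $G^{\bullet}=\Cone\bigl(E^{\bullet}_{\text{red}}\xrightarrow{\partial^{\vee}} H^0(T_E)^{\vee}\otimes\CO_P\bigr)[-1]$, and cones of morphisms of triangles are not unique in a triangulated category, so ``the two operations compensate'' does not by itself produce an isomorphism $\lambda\colon G^{\bullet}\to (G^{\bullet})^{\vee}[1]$ with $\lambda^{\vee}[1]=\lambda$. The paper devotes a careful diagram-chase (using that the relevant $\Hom(\CO_P[2],-)$ and $\Hom(-,\CO_P[-1])$ groups vanish, plus the compatibility of the semi-regularity map with the orbit vector field under Serre duality) to establish exactly this symmetry. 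So your proposal identifies the correct statements to prove but is missing the two ideas that make them provable: the detour through the Kummer-type fiber $\CK$, and the explicit cone-level argument for self-duality of the doubly-reduced complex.
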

\vspace{7pt}

We call $\mathsf{N}_{n, (\beta,d)}^{X}$ the reducd Pandharipande--Thomas
or stable pair invariants of $X$.
Theorem~\ref{4fijdsdf} is the following analog of \eqref{dfgsdgsdfF} for the reduced theory:
\[
\int_{[P_{n}(X,(\beta,d))]^{\text{red}}} \tau_0( \pi_1^{\ast}(\beta^{\vee}) \cup \pi_2^{\ast}( \omega ) )
\ = \ 
\sum _{k\in \BZ }k\cdot e\big(\nu ^{-1} (k) \big),
\]
where $\nu : P_n(X, (\beta,d)) / E \to \BZ$ is the Behrend function.

The following was first conjectured by J.~Bryan in \cite{Bryan-K3xE}.
\begin{cor} \label{Corollary_Deformation_invariance}
The invariants $\mathsf{N}_{n, (\beta,d)}^{X}$ are invariant under deformations
of $X$ for which $(\beta,d)$ remains of Hodge type $(2,2)$.
\end{cor}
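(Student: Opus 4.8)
The plan is to deduce the corollary directly from Theorem~\ref{4fijdsdf} together with the deformation invariance of the incidence invariants $\widetilde{\mathsf{N}}^{X}_{n,(\beta,d)}$ recalled in the introduction. The only point requiring care at the outset is that the quotient invariant $\mathsf{N}^{X}_{n,(\beta,d)}$ is defined solely for products $X = S \times E$, since its construction uses the translation action of the elliptic factor. Accordingly I would read the statement as follows: given a smooth projective family $\pi : \mathcal{X} \to B$ over a connected base whose fibres over two points $0,1 \in B$ are products $\mathcal{X}_0 = S_0 \times E_0$ and $\mathcal{X}_1 = S_1 \times E_1$, and for which $(\beta,d)$ stays of Hodge type $(2,2)$ in every fibre, the goal is to prove $\mathsf{N}^{\mathcal{X}_0}_{n,(\beta,d)} = \mathsf{N}^{\mathcal{X}_1}_{n,(\beta,d)}$.

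First I would apply Theorem~\ref{4fijdsdf} at each of the two product fibres, obtaining
\[
\mathsf{N}^{\mathcal{X}_0}_{n,(\beta,d)} = \widetilde{\mathsf{N}}^{\mathcal{X}_0}_{n,(\beta,d)}, \qquad \mathsf{N}^{\mathcal{X}_1}_{n,(\beta,d)} = \widetilde{\mathsf{N}}^{\mathcal{X}_1}_{n,(\beta,d)}.
\]
This replaces the Euler-characteristic (quotient) invariants, which are attached only to the product fibres, by the incidence invariants, which are defined for \emph{every} fibre $\mathcal{X}_t$ by integrating the cohomological insertion $\tau_0(\gamma)$ against the reduced virtual class $[P_n(\mathcal{X}_t,(\beta,d))]^{\text{red}}$. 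Here $\gamma \in H^4(\mathcal{X}_t,\QQ)$ is the flat section of the local system $R^4\pi_{\ast}\QQ$ that restricts on each product fibre to $\pi_1^{\ast}(\beta^{\vee}) \cup \pi_2^{\ast}(\omega)$.

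It then remains to invoke the deformation invariance of the incidence invariants. The reduced perfect obstruction theory is constructed relative to the base $B$: wherever $(\beta,d)$ remains of Hodge type $(2,2)$ the semiregularity reduction persists, and one obtains a $B$-relative reduced virtual class whose restrictions to the fibres give the fibrewise reduced classes. Since $\gamma$ is a flat section of $R^4\pi_{\ast}\QQ$ and $\tau_0(\cdot)$ is a purely cohomological operation, pairing it against this family of virtual classes yields a locally constant — hence globally constant — function on $B$. Therefore $\widetilde{\mathsf{N}}^{\mathcal{X}_0}_{n,(\beta,d)} = \widetilde{\mathsf{N}}^{\mathcal{X}_1}_{n,(\beta,d)}$, and combining this with the two displayed identities gives $\mathsf{N}^{\mathcal{X}_0}_{n,(\beta,d)} = \mathsf{N}^{\mathcal{X}_1}_{n,(\beta,d)}$.

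The only genuine subtlety, and the step I would treat most carefully, is the persistence of the reduced obstruction theory across the family: its very existence depends on the semiregularity map, whose reduction is available precisely because $(\beta,d)$ stays algebraic, exactly as in the reduced Gromov--Witten theory of K3 surfaces. Everything else is formal — the passage through Theorem~\ref{4fijdsdf} at the two product endpoints, together with the observation that a flat cohomology insertion pairs constantly with a deformation-invariant virtual class.
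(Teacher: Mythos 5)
Your proposal is correct and follows essentially the same route as the paper, whose proof is the one-line observation that the claim follows from Theorem~\ref{4fijdsdf} (equivalently Theorem~\ref{Theorem_Comparision_to_reduced_class}) together with the deformation invariance of the incidence invariants $\widetilde{\mathsf{N}}^{X}_{n,(\beta,d)}$. Your additional care about interpreting the statement as comparing two product fibres of a family, and about the persistence of the semiregularity reduction, is a reasonable expansion of exactly what the paper takes for granted.
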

\begin{proof}
By Theorem \ref{Theorem_Comparision_to_reduced_class} and the deformation invariance of $\widetilde{\mathsf{N}}^{X}_{n, (\beta,d)}$.
\end{proof}

Below we discuss further applications of Theorem~\ref{4fijdsdf} to properties of reduced PT invariants.
The results provide the foundations for a study of the reduced Pandharipande--Thomas theory of $S \times E$ in the future.

It is a difficult problem to explicitly determine PT invariants of Calabi--Yau threefolds.
For example, there is no strict\footnote{A strict Calabi--Yau threefold $X$ also satisfies the condition $H^1(X,\CO_X)=0$.
The PT invariants in the non-strict case often vanish.} Calabi--Yau threefold
for which explicit formulas for PT invariants are conjectured in all curve classes.
%any Calabi--Yau threefold with $H^1(X,\CO_X) = 0$.
In contrast, the reduced PT theory of the product $X = S \times E$ provides an interesting non-trivial example
which is much more accessible\footnote{The reduced theory of an abelian threefold
developed by Gulbrandsen \cite{Gul} is also very interesting.
A conjecture for the invariants in all curve classes can be found in \cite{BOPY}.}.
A complete evaluation of the reduced PT invariants
was conjectured in \cite{K3xE} motivated by physics \cite{KKV} and the calculations \cite{HilbK3}.
If the class on the K3 surface is taken to be primitive the answer is particularly beautiful.
Let $\beta_h \in H_2(S, \BZ)$ be primitive with self-intersection $\langle \beta_h, \beta_h \rangle = 2h-2$.
Then in \cite{K3xE} the following conjecture was made:
\begin{equation} \sum_{d = 0}^{\infty} \sum_{h = 0}^{\infty} \sum_{n \in \BZ} \mathsf{N}_{n, (\beta_h,d)}^{X} y^n q^{h-1} \widetilde{q}^{d-1} = \frac{1}{\chi_{10}(y, q, \widetilde{q})} \label{CONJ} \end{equation}
where $\chi_{10}$ is the Igusa cusp form, a Siegel modular form.
In \cite{K3xP1} we give a proof of \eqref{CONJ} in cases $d \in \{ 1,2 \}$
using an application of Theorem~\ref{4fijdsdf} at a crucial step.
This provided the original motivation for the paper\footnote{
Since the initial draft of the paper a proof of \eqref{CONJ} has appeared in \cite{OS1, OPix1}.
}.

\subsection{Virtual classes}
We describe the results of the paper in more detail.
In Section \ref{Section_Isotrivial_fibrations}, drawing upon ideas of Gulbrandsen \cite{Gul},
we construct isotrivial fibrations
\begin{equation} \label{ISOFIB} p : P_n(X, (\beta,d)) \to E \end{equation}
which are $E$-equivariant with respect to the translation action on the source
and a non-zero multiple of the addition action on the target.
Let $p$ be such a fibration and let $\CK$ be the fiber of $p$ over the zero $0_E \in E$.
We have an isomorphism of quotient stacks
\[ P_n(X, (\beta,d)) / E\ =\ \CK / G \]
where $G$ is a finite group.
In Section \ref{Section_Symmetric_Obstruction_Theory} we construct a symmetric perfect obstruction theory on $\CK$.
Let $[ \CK ]^{\text{vir}}$ be the associated virtual class on $\CK$ and let $\pi : \CK \to P_n(X, (\beta,d))/E$ be the projection.
We define a virtual class on the quotient by
\begin{equation} [ P_n(X, (\beta,d)) / E ]^{\text{vir}} \, := \, \frac{1}{|G|}\, \pi_{\ast} [\, \CK \, ]^{\text{vir}}. \label{134134} \end{equation}
Using a description of the virtual class in terms of the Fulton Chern class,
we show the class \eqref{134134} is independent of the choice of fibration $p$.

Since $\CK$ carries a symmetric obstruction theory and $\pi$ is etale,
Behrend's result \cite{B} implies the degree of $[ P_n(X, (\beta,d)) / E ]^{\text{vir}}$
is the quotient stable pair invariant of $X$ defined in (ii) above,
\[ \mathsf{N}_{n, (\beta,d)}^{X} = \int_{[ P_n(X, (\beta,d)) / E ]^{\text{vir}}} 1. \]

We prove the following comparision result which implies Theorem~\ref{4fijdsdf}. 

\begin{thm} \label{Theorem_Comparision_to_reduced_class} Let $P = P_{n}(X,(\beta,d))$
and let $\pi : P \to P/E$ be the projection. Then
\[ \pi^{\ast} [ P/E ]^{\text{vir}} \, =\, [\, P \, ]^{\text{red}}. \]
\end{thm}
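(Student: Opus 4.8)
The plan is to pull everything back to the finite \'etale cover of $P$ furnished by the isotrivial structure of Section~\ref{Section_Isotrivial_fibrations}, where both classes become honest products, and then to descend. First I would make the geometry explicit. If $p : P \to E$ is the chosen fibration, $E$-equivariant for translation on $P$ and $m$ times addition on $E$, then $G = E[m]$ acts on $\CK = p^{-1}(0_E)$, and the map
\[ q : \CK \times E \to P, \qquad (k,e) \mapsto e \cdot k, \]
is a $G$-torsor, with $g \in G$ acting by $(k,e) \mapsto (g\cdot k,\, e - g)$. Quotienting the second factor by the $E$-translation recovers $P/E = \CK/G$, and writing $\pi_{\CK} : \CK \to \CK/G = P/E$ for the map denoted $\pi$ in \eqref{134134}, one obtains a Cartesian square
\[
\begin{tikzcd}
\CK \times E \arrow[r, "\pr_1"] \arrow[d, "q"'] & \CK \arrow[d, "\pi_{\CK}"] \\
P \arrow[r, "\pi"'] & P/E
\end{tikzcd}
\]
in which $q$ and $\pi_{\CK}$ are finite \'etale of degree $|G|$ and the horizontals are smooth (orbifold) $E$-fibrations.

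Second, I would use this square to reduce the theorem to a statement on the cover. Since $\pi_{\CK}$ is \'etale and $[\CK]^{\text{vir}}$ is $G$-invariant (by the $G$-equivariance of the symmetric obstruction theory of Section~\ref{Section_Symmetric_Obstruction_Theory}), the projection formula gives $\pi_{\CK}^{\ast}[P/E]^{\text{vir}} = \tfrac{1}{|G|}\,\pi_{\CK}^{\ast}\pi_{\CK\ast}[\CK]^{\text{vir}} = [\CK]^{\text{vir}}$; base change along the square then yields $q^{\ast}\pi^{\ast}[P/E]^{\text{vir}} = \pr_1^{\ast}\pi_{\CK}^{\ast}[P/E]^{\text{vir}} = \pr_1^{\ast}[\CK]^{\text{vir}}$. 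Because $q$ is a finite \'etale $G$-torsor, $q_{\ast}q^{\ast} = |G|\cdot\mathrm{id}$, so $q^{\ast}$ is injective with $\QQ$-coefficients; hence it suffices to prove the single identity
\[ q^{\ast}[\,P\,]^{\text{red}} \;=\; \pr_1^{\ast}[\CK]^{\text{vir}} \]
on $\CK \times E$.

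Third --- and this is the heart of the matter --- I would prove this identity by comparing obstruction theories on $\CK \times E$. On the right, $\pr_1$ is smooth and $E$ is a smooth curve, so $\pr_1^{\ast}[\CK]^{\text{vir}}$ is the virtual class of the product obstruction theory $\pr_1^{\ast}\CE_{\CK}$, i.e. the exterior product $[\CK]^{\text{vir}} \times [E]$. On the left, $q$ is \'etale, so $q^{\ast}[P]^{\text{red}}$ is the virtual class of the pulled-back reduced obstruction theory $q^{\ast}\CE^{\text{red}}$. The identity therefore follows once one shows that, under $q$, the reduced obstruction theory of $P$ pulls back to the product of the symmetric theory of $\CK$ with the (trivial, since $E$ is smooth) theory of the $E$-factor,
\[ q^{\ast}\CE^{\text{red}} \;\simeq\; \pr_1^{\ast}\CE_{\CK}, \]
compatibly with the maps to the cotangent complex. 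As the symmetric theory on $\CK$ was manufactured in Section~\ref{Section_Symmetric_Obstruction_Theory} from the relative reduced theory along $p$, this comparison is essentially built in: I would check that the translation action trivializes the deformations in the $E$-direction, splitting off a trivial line matching $T_E$, and that the complementary summand is exactly $\pr_1^{\ast}\CE_{\CK}$.

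The main obstacle is precisely this last step: controlling the reduction at the level of complexes rather than merely of ranks. One must verify that the semi-regularity reduction performed globally on $P$ is compatible with the product decomposition $q$ --- equivalently, that reducing and then restricting to the fiber $\CK$ reproduces the symmetric theory, with the surviving deformation direction canonically identified with the tangent line to $E$ arising from translation --- and that the comparison isomorphism intertwines the two obstruction-theory maps into the (truncated) cotangent complex, not just the underlying $K$-theory classes. Once that compatibility of the three-term tangent--obstruction sequences is established, everything else is the formal functoriality of virtual classes under smooth pullback, finite \'etale descent, and base change in the Cartesian square above.
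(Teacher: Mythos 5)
Your first two steps --- setting up the Cartesian square relating $E\times\CK\to P$ and $\CK\to P/E$, pulling back, and reducing to a single identity on the cover --- match the skeleton of the paper's proof. But the step you yourself flag as ``the heart of the matter'' is exactly where your argument stops, and the route you propose for it is both harder than necessary and not obviously available. You want an isomorphism of perfect obstruction theories $q^{\ast}E^{\bullet}_{\text{red}}\simeq \pr_1^{\ast}(L\iota^{\ast}G^{\bullet})\oplus(\text{trivial factor})$ compatible with the maps to $\BL_{\CK\times E}$. By construction (diagram \eqref{Gdef2}) one only has an exact triangle $G^{\bullet}\to E^{\bullet}_{\text{red}}\to H^0(T_E)^{\vee}\otimes\CO_P$, and splitting this triangle off canonically after pullback to $\CK\times E$, \emph{as a map of obstruction theories over the cotangent complex}, is a nontrivial claim that you assert rather than prove. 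Without it your third step is a restatement of the theorem on the cover, not a proof.

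The paper avoids this entirely by working one level coarser. Via Siebert's formula, $[P]^{\text{red}}=\{s((E^{\bullet})^{\vee})\cap c_F(P)\}_1$ and $[\CK]^{\text{vir}}=\{s((L\iota^{\ast}G^{\bullet})^{\vee})\cap c_F(\CK)\}_0$: the virtual class depends only on the $K$-theory class of the obstruction theory (through its total Segre class) and on the Fulton Chern class of the underlying scheme, not on the obstruction-theory morphism itself. Since $E^{\bullet}_{\text{red}}$ and $G^{\bullet}$ differ from $E^{\bullet}$ only by trivial line bundles, their Segre classes agree, and $E^{\bullet}=\pi^{\ast}\CH^{\bullet}$ descends to $P/E$ by flat base change; combined with the \'etale invariance of $c_F$ (Appendix \ref{Appendix_Fulton_Chern_Class}) and Proposition \ref{Proposition_formula_for_vir_class}, the identity on the cover follows by a push--pull computation with no need to split any triangle. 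If you want to complete your argument, either carry out the obstruction-theory splitting honestly (including the compatibility with $\BL_{\CK\times E}$), or replace that step by the Siebert/Fulton--Chern-class comparison, which is what actually makes the proof go through.
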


\vspace{4pt}
\noindent \textbf{Remark.} In \cite{Gul} Gulbrandsen constructs isotrivial fibrations on the moduli space of stable pairs
on abelian threefolds if $n \neq 0$. Our construction here works for all Euler characteristics (assuming only $\beta \neq 0$)
thanks to extra line bundles on $X$ coming from the K3 surface.
The construction of the symmetric obstruction theory on the fiber $\CK$ does not follow Gulbrandsen's
argument.
Instead we rely upon the existence of the reduced perfect obstruction theory on the moduli space (proven by Kool-Thomas \cite{KT}).

\vspace{4pt}
\noindent \textbf{Remark.}
It would be interesting to extend our results in two directions:
\begin{enumerate}
 \item[(i)] Construct a symmetric perfect obstruction theory on $P_n(X, (\beta,d))/E$ directly,
 avoiding the detour via the Kummer fiber $\CK$.
 \item[(ii)] The moduli space $P_n(X, (\beta,d))$ is the truncation of a derived scheme
 with $(-1)$-shifted symplectic structure \cite{PTVV}. Can we lift the morphism $p : P_n(X, (\beta,d)) \to E$
 to a morphism of derived schemes such that the fiber naturally inherits the structure of a derived scheme
 with $(-1)$-shifted symplectic form? %Does the quotient $P_n(X, (\beta,d))/E$ carry a derived stack structure?
\end{enumerate}

% \subsection{The reduced virtual class}
% Let $\text{Ob}$ be the obstruction sheaf of the stable pairs obstruction theory on $P_n(X, (\beta,d))$.
% The semiregularity map
% \begin{equation} \text{Ob} \twoheadrightarrow H^{1,3}(X) \otimes \CO_P \label{semiregularity_map} \end{equation}
% measures the obstruction for %\footnote{We identify here $(\beta,d) \in H_2(X, \BZ)$ with its Poincare dual in $H^4(X, \BQ)$}
% $(\beta,d) \in H^{2,2}(X, \BQ)$ to remain of Hodge type $(2,2)$ under deformations of $X$.
% The reduction of the obstruction theory by \eqref{semiregularity_map} yields
% a $1$-dimensional reduced virtual cycle
% \[ [ P_n(X,(\beta,d)) ]^{\text{red}}, \]
% invariant under deformations for which $(\beta,d)$ remains of type $(2,2)$.
% The following implies Theorem~\ref{4fijdsdf}. 
% 
% \begin{thm} \label{Theorem_Comparision_to_reduced_class} Let $P = P_{n}(X,(\beta,d))$
% and let $\pi : P \to P/E$ be the projection. Then
% \[ \pi^{\ast} [ P/E ]^{\text{vir}} \, =\, [\, P \, ]^{\text{red}}. \]
% \end{thm}
% \vspace{6pt}

\subsection{Applications} \label{Subsection_applications}
Let $\Hilb_{X}((\beta,d), n)$ be the Hilbert scheme parametrizing $1$-dimensional subschemes
$Z \subset X$ with class $[Z] = (\beta,d)$ and Euler characteristic $\chi(\CO_Z) = n$.
Reduced Donaldson-Thomas of $X$ are defined by the Behrend function weighted Euler characteristic
\[ \mathsf{DT}_{n, (\beta,d)} = \int_{\Hilb_{X}((\beta,d), n) / E} \nu \dd{e}  \]
where the quotient is taken with respect to the translation action of $E$.
As above one can show that $\mathsf{DT}_{n, (\beta,d)}$
equals invariants defined by integrating the reduced virtual class of the moduli space against insertions.

Define generating series
\[
\mathsf{DT}_{(\beta,d)}(q) = \sum_n \mathsf{DT}_{n, (\beta,d)} q^n
\quad \text{ and } \quad
\mathsf{PT}_{(\beta,d)}(q) = \sum_n \mathsf{N}_{n, (\beta,d)}^{X} q^n 
\]
of reduced Donaldson-Thomas and stable pair invariants of $X$
in class $(\beta,d)$ respectively.
By a modification of an argument by Bridgeland \cite{Br1} we obtain the following.

\begin{thm} \label{Theorem_applications} Let $\beta \in H_2(S,\BZ)$ be a non-zero curve class.
\begin{enumerate}
 \item[(i)] $\mathsf{DT}_{(\beta,d)}(q) = \mathsf{PT}_{(\beta,d)}(q)$
 \item[(ii)] The series $\mathsf{PT}_{(\beta,d)}(q)$ is the Laurent expansion of a rational function in $q$,
 which is invariant under the transformation $q \mapsto q^{-1}$.
 \item[(iii)] Let $\beta \in H_2(S, \BZ)$ be a primitive class. Then there exist $N \geq 0$ and $\mathsf{n}_{g, (\beta,d)} \in \BZ$ such that
 \[ \mathsf{PT}_{(\beta,d)}(q) = \sum_{g=0}^{N} \mathsf{n}_{g, (\beta,d)} (q^{1/2} + q^{-1/2})^{2g-2}. \]
\end{enumerate}
\end{thm}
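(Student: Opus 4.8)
The plan is to run Bridgeland's motivic Hall algebra proof of the DT/PT correspondence and rationality \cite{Br1} (see also \cite{T16}), modified so that the integration map records Behrend-weighted Euler characteristics of the $E$-quotients rather than of the moduli stacks themselves. The geometric input is an identity in the motivic Hall algebra of compactly supported coherent sheaves on $X$, relating the generating element of stable pairs, the generating element of one-dimensional subschemes, and the element of zero-dimensional sheaves. This identity is proved by torsion-pair and support manipulations exactly as in the Calabi--Yau case, and makes no reference to any numerical invariant; I would import it verbatim, as it is insensitive to the holomorphic-symplectic geometry of $X$.

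The new ingredient is the integration map. The ordinary Behrend-weighted Euler characteristics of the moduli stacks vanish because $E$ acts with one-dimensional orbits, so in curve class $(\beta,d)$ with $\beta \neq 0$ I would instead assign to a moduli stack $\mathcal{M}$ the quantity $e(\mathcal{M}/E,\nu)\,z^{(\beta,d,n)}$, the Behrend-weighted orbifold Euler characteristic of the quotient by translation. Since $\beta \neq 0$ forces finite stabilizers, the quotient is Deligne--Mumford and the Behrend function descends along the \'etale projection $\mathcal{M} \to \mathcal{M}/E$, so this is well defined. The essential structural claim is that the reduced map is compatible with the Hall product when one factor is a zero-dimensional sheaf: adding a length-$k$ zero-dimensional sheaf $Q$ to a pair $F$ of class $(\beta,d)$ produces an object whose $E$-translation orbit is already rigidified by $F$, so that in the quotient the support of $Q$ ranges over all of $X$ rather than over $X/E$. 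Consequently the reduced invariants of class $(\beta,d)$ form a module over the \emph{ordinary} degree-zero integration, and the zero-dimensional factor is the full series $M(-q)^{e(X)}$, where $M$ is the MacMahon function. I expect the verification of this module compatibility --- in particular the additivity of the finite stabilizers in short exact sequences and the fiberwise factorization of the descended Behrend function over $E$ --- to be the technical heart of the argument.

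Granting this, I would apply the reduced map to the imported Hall algebra identity. The stable-pair and one-dimensional-subscheme elements map to $\mathsf{PT}_{(\beta,d)}(q)$ and $\mathsf{DT}_{(\beta,d)}(q)$, while the zero-dimensional factor maps to $M(-q)^{e(X)}$. Since $e(X) = e(S)\,e(E) = 24 \cdot 0 = 0$, this factor is identically $1$ and the identity collapses to $\mathsf{DT}_{(\beta,d)}(q) = \mathsf{PT}_{(\beta,d)}(q)$, proving (i). For (ii) I would carry through Bridgeland's structural argument: the stable-pair element lies in a subalgebra on which the reduced map produces, term by term, the Laurent expansion of a rational function of $q$, and the functional equation $q \mapsto q^{-1}$ arises from the derived dualizing involution on stable pairs, which commutes with the translation action and hence descends to the $E$-quotient.

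For (iii) I would combine (ii) with a bound on the pole structure for primitive $\beta$. A rational function of $q$ invariant under $q \mapsto q^{-1}$ lies in the $\BZ$-span of $\{(q^{1/2}+q^{-1/2})^{2g-2}\}_{g \geq 0}$ precisely when its only pole is at most a double pole at $q = -1$; I would establish this bound either by transporting the rational function through the reduced Gromov--Witten/stable-pairs correspondence of \cite{K3xE}, where the contributing genera are bounded, or by a direct dimension estimate on the reduced stable-pair moduli spaces in primitive classes. Integrality of the coefficients $\mathsf{n}_{g,(\beta,d)}$ is then automatic, since the invariants $\mathsf{N}^{X}_{n,(\beta,d)}$ are integers and the change of basis between $\{q^{n}\}$ and $\{(q^{1/2}+q^{-1/2})^{2g-2}\}$ is unitriangular over $\BZ$.
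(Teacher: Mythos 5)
Your proposal runs aground on exactly the point you yourself flag as ``the technical heart'': the compatibility of an $E$-quotient integration map with the Hall product. Defining $\Upsilon^{\mathrm{red}}([\mathcal{M}\to\CA]) = e(\mathcal{M}/E,\nu)\,q^n v^{\beta}$ and showing it respects Bridgeland's identity is not a verification one can defer --- it is the entire difficulty, and for part (ii) you need much more than a module structure over the degree-zero part: Bridgeland's rationality argument runs through Joyce's no-poles theorem for the virtual-indecomposable elements $\epsilon_{\alpha}$ and requires the integration map to be a \emph{Poisson algebra homomorphism} on the full semiclassical subalgebra. No such homomorphism on $E$-quotients is constructed in your proposal, and constructing one is precisely what the paper is organized to avoid. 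The paper instead localizes at a fixed cycle $[C]\in\Chow(X)$: it builds the Hall algebra $H(\CA_C)$ of sheaves supported on $C$, shows $\iota^{\ast},\iota_{\ast}$ are algebra homomorphisms, proves the DT/PT correspondence, the Toda equation and rationality \emph{$C$-locally} (Theorem \ref{Theorem_Clocal}, where no quotient by $E$ ever appears because the curve is pinned down), and only at the very end integrates the translation-invariant constructible function $[C]\mapsto \mathsf{P}_{n,C}$ over $\Chow_{(\beta,d)}(X)/E$. Two further omissions: since $H^1(X,\CO_X)\neq 0$ for $X=S\times E$, the Behrend function identities of Joyce--Song do not apply off the shelf and must be extended via the shifted-symplectic/Darboux results of \cite{PTVV, BBBBJ, T16}, which the paper addresses explicitly; and your observation that the degree-zero factor is $M(-q)^{e(X)}=1$ is correct but only becomes usable once the rest of the identity has been given numerical meaning.

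For part (iii) the substance is the pole bound (a unique pole at $q=-1$ of order at most $2$), and neither of your two suggested routes establishes it. The paper's route is different and notably self-referential: it first invokes Corollary \ref{Corollary_Deformation_invariance} (hence Theorem \ref{Theorem_Comparision_to_reduced_class}) to deform to the case of \emph{irreducible} $\beta$, so that any Cohen--Macaulay curve in class $(\beta,d)$ decomposes as $C_0+\sum_i d_i C_i$ with $C_i$ elliptic fibers; then Lemma \ref{Lemma_Relations_NN} kills the generalized DT contributions of the genus-one components and the multiplicity-one statement for $C_0$ produces exactly the factor $q/(1-q)^2$ in the $C$-local Toda equation, which after the sign change is the double pole at $q=-1$. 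Your fallback via the GW/PT correspondence would itself require a genus-finiteness input on the Gromov--Witten side that you do not supply, and a ``direct dimension estimate'' on the stable-pair moduli does not obviously control poles of the generating function. The closing integrality argument via unitriangularity of the basis change is fine and matches the paper's induction on $g$.
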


Part (i) is the analog of the usual DT/PT correspondence
in the reduced case.
Here there is no contribution from degree $0$
Donaldson-Thomas invariants since $X$ has topological Euler characteristic zero.
Parts (ii-iii) are corollaries of a modification of Toda's equation (Section \ref{Section_Application}).
Part (iii) is a regularity result and used in \cite{K3xP1} at a crucial step.

\subsection{Acknowledgements}
I would like to thank both Davesh Maulik and Yukinobu Toda for
interesting discussions and advice on technical details,
and the latter for giving an inspiring lecture series at MIT in the spring of 2016.
I am also very grateful to Jim Bryan, Michael McBreen, Rahul Pandharipande,
Junliang Shen, and Qizheng Yin for useful discussions.

\section{Preliminaries}
Let $P = P_n(X, (\beta,d))$ denote the moduli space of stable pairs on the threefold $X = S \times E$,
and let $\II = [ \CO_{X \times P} \to \mathbb{F} ]$
be the universal stable pair on $X \times P$,
considered as a complex of coherent sheaves with $\CO_{X \times P}$ in degree $0$ and $\mathbb{F}$ in degree $1$.
The complex $\II$ naturally corresponds to an object in the bounded derived category $D^b(X \times P)$ of coherent sheaves on $X$.
%If necessary we will make clear when maps of $\II$ are considered as maps of complexes or of objects in the derived category.
%By construction \cite{PT1}, $\II$ is quasiisomorphic to a bounded complex of locally free sheaves; it is perfect.

\subsection{The elliptic curve action} \label{Section_Elliptic_Curve_Action}
Consider the action of $E$ on $X = S \times E$ by translation in the second factor
\[ m_X : E \times X \to X, \ \ (x, (s,e)) \mapsto (s, e + x) \,.  \]
Translation by an element $x \in E$ is denoted by
\[ t_x = m_X(x,\, \cdot\, ) : X \to X, (s,e) \mapsto (s, e+x) \,. \]

Let $\iota : E \to E, x \mapsto -x$ be the inverse map.
The pullback $\psi^{\ast}(\II)$ of the universal stable pair $\II$ by the composition
\[ \psi:
E \times X \times P \xrightarrow{\iota \times \id_{X \times P}} E \times X \times P
 \xrightarrow{m_X \times \id_P}  X \times P
\]
defines a family of stable pairs on $X$ over $E \times P$,
and hence, by the universal property of $P$, corresponds to a map
\[ m_{P} \colon E \times P \to P \]
such that we have the isomorphism of complexes\footnote{Here \eqref{34134} is an actual map of complexes, not only a morphism in the derived category.}
\footnote{We let $m_P$ denote here also the map
$E \times X \times P \to X \times P$ that acts on $E \times P$ as the group action $m_P$ and by the identity on $X$.}
\begin{equation} \psi^{\ast}(\II) \cong m_P^{\ast}(\II) \label{34134} \end{equation}
% where
% \[ \widetilde{m}_P : E \times X \times P
% \xrightarrow{\sigma_{12} \times \id_P} X \times E \times P \xrightarrow{m_P} X \times P \]
% with $\sigma_{12}$ permuting the two factors.
Since $m_X$ is a group action, $m_P$ defines a group action on $P$.
Points $x \in E$ act on elements $I \in P$ by
\[ I + x := m_P( x, I ) = t_{-x}^{\ast}( I ) = t_{x \ast}(I). \]

Consider the diagonal action
\[ m_{X \times P} :
E \times X \times P
\to
X \times P,\ \ (e,x,I) \mapsto (x+e, I+e).
\]
From \eqref{34134} we obtain the isomorphism of complexes $m_{X \times P}^{\ast}( \II ) = \pi_E^{\ast}(\II)$
% (Use the composition
% \begin{multline*}
% m_{X \times P} :
% E \times X \times P
% \xrightarrow{(\pi_E, m_X \circ (\pi_E, \pi_X), \pi_P)}
% E \times X \times P
% \xrightarrow{( \pi_X, m_{P} \circ (\pi_E, \pi_P))} X \times P,\\ \ \  (e,x,I) \mapsto (x+e, I+e),
% \end{multline*}
% where we let $\pi_X, \pi_E, \pi_P$ be the projections from $E \times X \times P$ to the respective factors.)
which satisfies the cocycle conditions
for a descent datum with respect to the quotient map
\[ \rho : X \times P \to (X \times P)/E \]
where the quotient is taken with respect to the diagonal action $m_{X \times P}$.
Hence $\II$ descends along $\rho$:
there exist a complex $\overline{\II}$ on $(P \times X)/E$ such that $\rho^{\ast}(\overline{\II}) \cong \II$.

\subsection{The semiregularity map}
We follow \cite[Section 3]{MPT} and \cite{KT}.
Let
\[ X \xleftarrow{\pi_X} X \times P \xrightarrow{\pi_P} P \]
be the projection maps, and let
\[ \At(\II) \in \Ext^1_{X \times P}( \II,\, \II \otimes \BL_{X \times P} ) \]
be the Atiyah class \cite{HT}.

Let $\At_P(\II)$ be the image of $\At(\II)$ under the composition
\begin{equation*}
\begin{aligned}
\Ext_{X \times P}^1( \II, \II \otimes \BL_{X \times P})
& \overset{(i)}{\to} \Ext_{X \times P}^1( R \hom(\II, \II), \pi_{P}^{\ast} \BL_{P}) \\
& \overset{(ii)}{\to} \Ext_{X \times P}^1( R \hom(\II, \II)_0, \pi_{P}^{\ast} \BL_{P}) \\
& \overset{(iii)}{\cong} \Hom_{P}( R \pi_{\ast} (R \hom(\II, \II)_0 \otimes \omega_P)[2], \BL_P ),
\end{aligned}
\end{equation*}
where (i) arises from the projection
$\BL_{X \times P} \cong \pi_X^{\ast}(\Omega_X) \oplus \pi_P^{\ast}( \BL_P ) \to \pi_P^{\ast} \BL_P$,
(ii) is the restriction to the traceless part $( \ldots  )_0$,
and (iii) is an application of relative Verdier duality.
The morphism $\At_P(\II)$ is the perfect obstruction theory for stable pairs
\[ E^{\bullet} := R \pi_{P \ast}\big( R \hom (\II, \II)_0 \otimes \omega_{\pi_P})[2] \xrightarrow{\At_P(\II)} \BL_P. \]

Let $\At_X(\II)$ be the projection of $\At(\II)$ under
\[
\Ext_{X \times P}^1( \II, \II \otimes \BL_{X \times P}) \to
\Ext_{X \times P}^1( R\hom(\II, \II), \pi_X^{\ast}(\Omega_X) ) \,.
\]
Consider the composition
\begin{equation*}
\begin{aligned}
(E^{\bullet})^{\vee}
%\cong R \pi_{P \ast} R\hom( \II, \II)_0[1]
\xhookrightarrow{\quad} &\  R \pi_{P \ast} R\hom( \II, \II)[1] \\
\xrightarrow{\cup \At_X(\II)}  & \
R \pi_{P \ast} R\hom( \II, \II \otimes \pi_X^{\ast}(\Omega_X))[2]  \\
\xrightarrow{\ \ \text{tr} \ \ } &\ R \pi_{P \ast}( \pi_X^{\ast}(\Omega_X) )[2] \\
= & \ R \Gamma(X, \Omega_X)[2] \otimes \CO_P.
\end{aligned}
\end{equation*}
Taking $h^1$ we obtain the \emph{semiregularity map}
\begin{equation} \label{SRMAP} \sr \colon (E^{\bullet})^{\vee} \to h^1( (E^{\bullet})^{\vee})[-1] \to H^{1,3}(X) \otimes \CO_P[-1] \end{equation}
which is surjective by \cite[Proposition 11]{MPT}.
Let $E_{\text{red}}^{\bullet}$ be the cone of the morphism $H^{1,3}(X)^{\vee} \otimes \CO_P[1] \to E^{\bullet}$
dual to $\sr$.
We have the diagram
\[
\begin{tikzcd}
E^{\bullet} \ar{d} \ar{r} & E^{\bullet}_{\text{red}} \ar[dotted]{dl}{\exists \text{?}} \\
\BL_P
\end{tikzcd}
\]
and the problem is to find a map $E^{\bullet}_{\text{red}} \to \BL_P$
which is a perfect obstruction theory \cite[Appendix A]{MPT} for $P$.
By arguments of Kool and Thomas \cite{KT} we have the following.

\begin{prop}[\cite{KT}] \label{Proposition_Reduced_Perfect_Obstruction_Theory}
There exist a morphism $E^{\bullet}_{\text{red}} \to \BL_P$ which
is a perfect obstruction theory for $P$.
\end{prop}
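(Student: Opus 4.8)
The goal is to produce the dotted arrow, that is, to lift the obstruction theory $\At_P(\II)\colon E^{\bullet}\to\BL_P$ along the canonical map $E^{\bullet}\to E^{\bullet}_{\text{red}}$. Write $V=H^{1,3}(X)^{\vee}\otimes\CO_P$, so that by construction $E^{\bullet}_{\text{red}}$ fits into the distinguished triangle
\[ V[1]\xrightarrow{\ j\ }E^{\bullet}\longrightarrow E^{\bullet}_{\text{red}}\longrightarrow V[2], \]
with $j=\sr^{\vee}$. Applying the cohomological functor $\Hom(-,\BL_P)$ yields the exact sequence
\[ \Hom(E^{\bullet}_{\text{red}},\BL_P)\to\Hom(E^{\bullet},\BL_P)\xrightarrow{\,\circ\, j\,}\Hom(V[1],\BL_P), \]
so a lift of $\At_P(\II)$ exists precisely when the composite $\At_P(\II)\circ j\in\Hom(V[1],\BL_P)$ vanishes. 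Establishing this vanishing is the heart of the argument.

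To analyze the composite I would first exploit that, by \eqref{SRMAP}, the map $\sr$ factors through the top cohomology $h^1((E^{\bullet})^{\vee})[-1]$. Dualizing, $j$ factors through the canonical truncation $\tau_{\leq -1}E^{\bullet}=h^{-1}(E^{\bullet})[1]\to E^{\bullet}$, and it identifies $V$ with a subbundle of $h^{-1}(E^{\bullet})$ (a subbundle because $\sr$ is surjective by \cite[Proposition 11]{MPT}). Since $V[1]$ is concentrated in degree $-1$ and $\BL_P$ in degrees $\leq 0$, the class $\At_P(\II)\circ j$ is controlled by the restriction of $h^{-1}(\At_P(\II))\colon h^{-1}(E^{\bullet})\to h^{-1}(\BL_P)$ to $V$. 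The problem thus reduces to showing that the semiregularity subbundle $V$ lies in the kernel of $h^{-1}(\At_P(\II))$.

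This last vanishing is the main obstacle, and I would prove it by tracing both maps back to the Atiyah class $\At(\II)\in\Ext^1(\II,\II\otimes\BL_{X\times P})$. Under the splitting $\BL_{X\times P}\cong\pi_X^{\ast}\Omega_X\oplus\pi_P^{\ast}\BL_P$, the obstruction-theory map $\At_P(\II)$ records the $\pi_P^{\ast}\BL_P$-component, whereas $\sr$, hence $j$ and the subbundle $V$, is built from the $\pi_X^{\ast}\Omega_X$-component via cup product and the trace. Consequently $\At_P(\II)\circ j$ is a cross term mixing the two orthogonal components of $\At(\II)$ together with a double trace, and the claim is that it vanishes. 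I would make this precise by expressing it through the Yoneda/trace pairing and invoking the cyclic symmetry of the trace together with the compatibility of the Atiyah class with the product decomposition of $X\times P$, exactly as in \cite[Section 3]{MPT} and the reduction arguments of \cite{KT} (equivalently, one may construct a reduced Atiyah class at the chain level directly). Geometrically the statement reflects that the semiregularity obstructions are transcendental and do not enter the scheme structure of $P$ once $(\beta,d)$ is fixed of Hodge type $(2,2)$; I expect the genuine work to lie here.

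Granting the vanishing, a lift $E^{\bullet}_{\text{red}}\to\BL_P$ exists, and it remains to verify it is a perfect obstruction theory. Surjectivity of $\sr$ exhibits $V$ as a subbundle of $h^{-1}(E^{\bullet})$, so the cone is quasi-isomorphic to a two-term complex of vector bundles, $E^{\bullet}_{\text{red}}\cong[\,E^{-1}/V\to E^0\,]$, and is perfect in $[-1,0]$. The long exact cohomology sequence of the triangle shows that $E^{\bullet}\to E^{\bullet}_{\text{red}}$ is an isomorphism on $h^0$ and surjective on $h^{-1}$; since $\At_P(\II)$ is already a perfect obstruction theory and the lift is compatible with it, $h^0$ of the lift is an isomorphism and $h^{-1}$ is surjective, which is exactly the condition that $E^{\bullet}_{\text{red}}\to\BL_P$ be a perfect obstruction theory for $P$.
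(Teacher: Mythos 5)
Your reduction is set up correctly: a lift exists iff the composite $\At_P(\II)\circ\sr^{\vee}\in\Hom(H^{1,3}(X)^{\vee}\otimes\CO_P[1],\BL_P)$ vanishes, and you rightly flag this vanishing as the heart of the matter. The gap is that your proposed proof of it does not go through. The cyclic symmetry of the trace and the compatibility of the Atiyah class with the splitting $\BL_{X\times P}\cong\pi_X^{\ast}\Omega_X\oplus\pi_P^{\ast}\BL_P$ yield relations between $E^{\bullet}$ and its Serre dual $(E^{\bullet})^{\vee}[1]$ --- this is exactly the content of Lemmas \ref{Lemma_Comparision_Tangent_Vectors} and \ref{Lemma_Diagram_commutes} in the paper, which identify $\theta\circ\sr^{\vee}$ with the vector field $\partial$ generated by the $E$-action. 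But no such manipulation produces a statement about the composite with the map to $\BL_P$: the target $\BL_P$ is not self-dual and does not participate in the trace pairing, so knowing that $\sr^{\vee}$ is ``dual'' to the $E$-direction tells you nothing about whether the image of $\sr^{\vee}$ dies in $\BL_P$. Neither \cite[Section 3]{MPT} nor \cite{KT} proves this vanishing directly; indeed the difficulty of producing the dotted arrow by a direct argument is precisely why \cite[Appendix A]{MPT} leaves it as ``the problem'' and why Kool--Thomas introduced a different mechanism. As it stands, your key step is an appeal to references that do not contain the claimed argument.

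The paper's actual proof sidesteps the lifting problem entirely. One takes an algebraic twistor family $\mathcal{S}\to B=\Spec(\BC[\epsilon]/\epsilon^2)$ of the K3 surface, chosen so that the moduli space of stable pairs on the fibers of $\mathcal{S}\times E\to B$ is isomorphic to $P$ itself (the class $\beta$ does not deform to first order). The standard relative obstruction theory of this family over $B$ is $E^{\bullet}$, while the associated absolute obstruction theory over $\Spec\BC$ --- which exists by the general construction, no lifting required --- is quasi-isomorphic to $E^{\bullet}_{\text{red}}$. If you want to salvage a direct approach, you would need a genuinely new argument for the vanishing (e.g.\ via derived algebraic geometry as in \cite{Pridham}); your final paragraph, checking that any lift is automatically a perfect obstruction theory, is correct but is the easy part.
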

\begin{proof}
We sketch the argument which can be found in \cite[Section 2]{KT}.
See \cite{Pridham} for a different approach using derived algebraic geometry.

Let $\mathcal{S} \to B = \Spec( \BC[\epsilon]/\epsilon^2 )$ be an algebraic twistor family of $S$
as in \cite[Section 2.1]{KT}.
The moduli space of stable pairs on the fibers of the family $\mathcal{S} \times E \to B$
is isomorphic to $P$ \cite[Prop 2.3]{KT}.
The perfect obstruction theory relative to the base $B$ is isomorphic to $E^{\bullet}$,
while the associated absolute perfect obstruction theory is quasi-isomorphic to $E^{\bullet}_{\text{red}}$.
\end{proof}

\subsection{Tangent vectors} \label{Section_Tangent_Vectors}
We describe the global vector field on $P$
obtained by moving stable pairs in the $E$-direction on $X$, or equivalently, obtained
by the derivative of the elliptic curve action on $P$.

Let $m : E \times P \to P$ be the action, and consider the composition
\[
T_E \otimes \CO_{E \times P} \hookrightarrow T_{E \times P} \xrightarrow{\dd{m}} m^{\ast} T_P
\]
where $\dd{m}$ is the differential of $m$. Restricting to $0_E \times P \hookrightarrow E \times P$
we obtain the global vector field
\begin{equation} \label{tv1} H^0(T_E) \otimes \CO_P = T_{E, 0_E} \otimes \CO_P \to T_P \cong \ext_{\pi_P}^1(\II, \II)_0, \end{equation}
Since $E$ acts on $P$ with finite stabilizer, \eqref{tv1} is an inclusion.

We will require an alternative description of the map \eqref{tv1}.
Consider the composition
\begin{equation} \label{tv2} 
H^0(T_E) \otimes \CO_P
\cong H^0(X, T_X) \otimes \CO_P
\xrightarrow{ \,\lrcorner \At_X(\II) } \Ext^1(\II, \II) \otimes \CO_P
\to \ext_{\pi_P}^1(\II, \II) \end{equation}
where the first map arises from the identification $H^0(E, T_E) = H^0(X, T_X)$,
and the second map is cup product with $\At_X(\II)$ followed by contraction of $T_X$ with $\Omega_X$.

\begin{lemma} \label{Lemma_Comparision_Tangent_Vectors}
The morphisms \eqref{tv1} and \eqref{tv2} agree (up to sign).
\end{lemma}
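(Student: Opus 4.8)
The plan is to realise both \eqref{tv1} and \eqref{tv2} as the $E$-directional component, along $0_E$, of the Atiyah class of the universal complex after pulling it back to $E \times X \times P$, and then to identify the two expressions using the compatibility \eqref{34134} together with the functoriality of the Atiyah class \cite{HT}.

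To set this up, recall that $\At(\II) \in \Ext^1_{X \times P}(\II, \II \otimes \BL_{X\times P})$ decomposes as $\At_X(\II) + \At_P(\II)$ under the splitting $\BL_{X\times P} \cong \pi_X^{\ast} \Omega_X \oplus \pi_P^{\ast} \BL_P$, and that $\At_P(\II)$ is precisely the obstruction theory $E^{\bullet} \to \BL_P$ inducing the identification $T_P \cong \ext_{\pi_P}^1(\II,\II)_0$ used in \eqref{tv1}. The functoriality I will use is that for any morphism $g$ the Atiyah class of $g^{\ast}\II$ factors as $g^{\ast}\At(\II)$ composed with the cotangent map $g^{\ast}\BL \to \BL$; equivalently, the Kodaira--Spencer class of $g^{\ast}\II$ relative to $g$ vanishes, so $\At(g^{\ast}\II)$ lifts to the source of the cotangent map. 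I apply this to $g = m_P$ and $g = \psi$, projecting onto the $\Omega_E$-summand of $\BL_{E\times X\times P}$ and restricting to $0_E$ to single out the $E$-direction.

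For the morphism $m_P : E \times X \times P \to X \times P$, which is the identity on the $X$-factor and the action $m$ on $E \times P$, the cotangent map sends $\pi_P^{\ast}\BL_P$ into $\pi_E^{\ast}\Omega_E \oplus \pi_P^{\ast}\BL_P$ with $\Omega_E$-component $\dd m$, while $\pi_X^{\ast}\Omega_X$ contributes nothing in the $E$-direction. Hence the $\Omega_E$-component of $\At(m_P^{\ast}\II)$ at $0_E$ is the contraction of $\At_P(\II)$ with $\dd m$, which under $T_P \cong \ext_{\pi_P}^1(\II,\II)_0$ is exactly the differential of the action, i.e. the morphism \eqref{tv1}. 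For the morphism $\psi$, which is the identity on $P$ and acts on $E \times X$ by $(x, s, e) \mapsto (s, e - x)$, the cotangent map sends $\pi_X^{\ast}\Omega_X$ into the $\Omega_E \oplus \Omega_X$ summands with $\dd\psi(\partial_x) = -\partial_e$; thus the $\Omega_E$-component of $\At(\psi^{\ast}\II)$ at $0_E$ is the contraction of $\At_X(\II)$ with the generating vector field of the translation, which under $H^0(E, T_E) = H^0(X, T_X)$ is precisely the map $\lrcorner \At_X(\II)$ of \eqref{tv2}, up to the sign coming from $\iota$. Since \eqref{34134} is an isomorphism $\psi^{\ast}(\II) \cong m_P^{\ast}(\II)$ of complexes, the two $\Omega_E$-components coincide, and therefore \eqref{tv1} and \eqref{tv2} agree up to sign.

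The main obstacle will be making the functoriality of the Atiyah class precise in the derived category and in families over $P$: one must track the splitting of $\BL_{X\times P}$ and its image under the two cotangent maps, verify that extracting the $\Omega_E$-summand at $0_E$ genuinely produces the contraction operators appearing in \eqref{tv1} and \eqref{tv2}, and confirm that the resulting class is traceless so that \eqref{tv2} lands in $\ext_{\pi_P}^1(\II,\II)_0$. The sign ambiguity is harmless for the applications and originates in the inverse map $\iota$ in the definition of $\psi$.
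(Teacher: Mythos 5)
Your proposal is correct and is essentially the paper's own argument: both rest on the $E$-equivariance of $\II$ together with the functoriality of the Atiyah class, and both conclude that the composites of the ($E$-direction) vector field with $\,\lrcorner\,\At_X(\II)$ and with $\At_P(\II)$ are negatives of each other. The only difference is packaging: you compare the $\Omega_E$-components at $0_E$ of $\At(\psi^{\ast}\II)\cong\At(m_P^{\ast}\II)$ on $E\times X\times P$ using \eqref{34134}, while the paper equivalently descends $\II$ along the diagonal quotient $\rho : X\times P\to (X\times P)/E\cong S\times P$ and observes that $\At(\II)$, factoring through $\rho^{\ast}\BL_{(X\times P)/E}^{\vee}$, annihilates the diagonal vector field.
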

\begin{proof}
Consider the diagonal action of $E$ on $X \times P$, and let
\[ \rho : X \times P \to (X \times P)/E\]
be the quotient map. The right hand side is isomorphic to $S \times P$ via the isomorphism
\[ (X \times P)/E \to S \times P,\  [((s,e),I)] \mapsto (s,I-e), \]
and therefore projective. The map $\rho$ is a trivial $E$-bundle.
%the map $\rho$ is the product of the action $m : E \times P \to P$ with the identity map on $S$.
%
By Section~\ref{Section_Elliptic_Curve_Action}
there is a complex $\overline{\II}$ on $S \times P$ with $\rho^{\ast}(\overline{\II}) = \II$.
By the functoriality of Atiyah classes \cite{HT},
$\At(\II)$ is the pullback of the Atiyah class $\At(\overline{\II})$ on $(X \times P)/E$ under $\rho$.
Hence, $\At(\II): \BL_{X \times P}^{\vee} \to R \hom( \II, \II )[1]$ factors as
\begin{equation} \label{mmggf}
\BL_{X \times P}^{\vee}
\to \rho^{\ast} \BL_{(X \times P)/E}^{\vee}
\xrightarrow{\rho^{\ast} \At(\overline{\II})}
\rho^{\ast} R\hom( \overline{\II}, \overline{\II} )[1]
= R\hom( \II, \II)[1].
\end{equation}

The diagonal action of $E$ on $X \times P$ defines the vector field
\begin{equation}
H^0(E, T_E) \otimes \CO_{X \times P} \to \BL_{X \times P}^{\vee}
=
\pi_X^{\ast}(T_X) \oplus \pi_P^{\ast}(\BL_P^{\vee})
\label{vec2} \end{equation}
which is isomorphic to the direct sum of the pullbacks to $X \times P$ of
\eqref{tv1} and the natural inclusion $H^0(X, T_X) \otimes \CO_X \to T_X$.
%\[ H^0(E, T_E) \otimes \CO_X = H^0(X, T_X) \otimes \CO_X \to T_X. \]
By the factorization \eqref{mmggf} and since $\rho$ contracts the orbits of the diagonal action, the composition
\[
H^0(E, T_E) \otimes \CO_{X \times P}
\to
\BL_{X \times P}^{\vee}
\xrightarrow{\At(\II)}
R \hom(\II, \II)[1]
\]
vanishes.
We find that the morphism
\[ H^0(E, T_E) \otimes \CO_{X \times P} \to \pi_P^{\ast}(\BL_P^{\vee}) \xrightarrow{\At_P(\II)} R\hom(\II, \II)[1] \]
and
\[ H^0(E, T_E) \otimes \CO_{X \times P} \to \pi_X^{\ast}(T_X) \xrightarrow{ \,\lrcorner \, \At_X(\II) } R\hom(\II, \II)[1] \]
are the negative of each other. The claim now follows from adjunction.
\end{proof}

Because $X$ has trivial canonical bundle, relative Serre duality provides
the isomorphism
\[
\theta : E^{\bullet} \to (E^{\bullet})^{\vee}[1]
\]
satisfying $\theta^{\vee}[1] = \theta$. The perfect obstruction theory $E^{\bullet}$ is symmetric \cite{B}.

Let $s : H^0(T_E)^{\vee} \otimes \CO_P \to H^{1,3}(X) \otimes \CO_P$ be the isomorphism
induced by
the non-degenerate pairing
\[ H^0(X, T_X) \otimes H^3(X, \Omega_X) \to H^3(X, \CO_X) \cong \BC. \]
and the identification $H^0(E, T_E) = H^0(X, T_X)$.
Let also
\[ \partial^{\vee} : E^{\bullet} \to h^0(E^{\bullet}) = \Omega_P \to H^0(T_E)^{\vee} \otimes \CO_P \]
denote the composition of the natural cut-off map and the dual of \eqref{tv1}.

\begin{lemma} \label{Lemma_Diagram_commutes}
The following diagram commutes:
\[
\begin{tikzcd}
E^{\bullet} \ar{d}{\theta} \ar{r}{\partial^{\vee}} & H^0(T_E)^{\vee} \otimes \CO_P \ar{d}{s} \\
(E^{\bullet})^{\vee}[1] \ar{r}{\sr[1]} & H^{1,3}(X) \otimes \CO_P,
\end{tikzcd}
\]
where $\sr$ is the semiregularity map \eqref{SRMAP}.
\end{lemma}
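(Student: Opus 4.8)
The plan is to reduce the claim to an equality of two sheaf maps on $P$ and then to recognize it as the Serre adjunction between the tangent vector field \eqref{tv1} and the semiregularity map \eqref{SRMAP}, both of which are manufactured from the single Atiyah class $\At_X(\II)$.

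First I would reduce to $h^0$. Both composites in the square are morphisms from the perfect complex $E^{\bullet}$, of amplitude $[-1,0]$, to the sheaf $H^{1,3}(X) \otimes \CO_P$ placed in degree $0$. Since $\Hom_{D(P)}(E^{\bullet}, \CG) = \Hom_P(h^0(E^{\bullet}), \CG)$ for any sheaf $\CG$ (the truncation triangle $\tau^{\le -1} E^{\bullet} \to E^{\bullet} \to h^0(E^{\bullet})$ kills the remaining contributions, as $\tau^{\le -1}E^\bullet$ sits in degrees $\le -1$), both morphisms factor uniquely through $h^0(E^{\bullet}) = \Omega_P$. It therefore suffices to compare the two induced $\CO_P$-linear maps $\Omega_P \to H^{1,3}(X)\otimes\CO_P$. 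On the $\partial^{\vee}$-side this is $s$ composed with the dual of \eqref{tv1}; on the $\sr[1]\circ\theta$-side it is $h^1(\sr) \circ h^0(\theta)$, where $h^0(\theta)\colon \Omega_P \xrightarrow{\sim} \ext^2_{\pi_P}(\II,\II)_0$ is the relative Serre-duality isomorphism and $h^1(\sr)\colon \ext^2_{\pi_P}(\II,\II)_0 \to H^{1,3}(X)\otimes\CO_P$ is the semiregularity map on the obstruction sheaf, sending a class $e$ to $\tr\big(e \cup \At_X(\II)\big)$.

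Next I would express both maps through $\At_X(\II)$ and establish their adjunction. By Lemma~\ref{Lemma_Comparision_Tangent_Vectors} I may replace \eqref{tv1} by \eqref{tv2}, so the tangent field is the contraction $u \mapsto u \lrcorner \At_X(\II)$ of a global vector field against $\At_X(\II) \in \Ext^1_{X\times P}(R\hom(\II,\II), \pi_X^{\ast}\Omega_X)$. Thus both sheaf maps are governed by $\At_X(\II)$, the first by contracting its $\Omega_X$-factor, the second by cupping and tracing. The heart of the matter is then the identity, for local sections $u$ of $H^0(T_E)\otimes\CO_P = H^0(X,T_X)\otimes\CO_P$ and $e$ of $\ext^2_{\pi_P}(\II,\II)_0$,
\[
\big\langle h^1(\sr)(e),\, u \big\rangle_X \;=\; \big\langle u\lrcorner \At_X(\II),\, e \big\rangle_{\tr},
\]
where the left pairing is the fibrewise Serre pairing $H^3(X,\Omega_X) \otimes H^0(X,T_X) \to H^3(X,\CO_X) = \BC$ defining $s$, and the right pairing is the composition-and-trace pairing $\ext^1_{\pi_P}(\II,\II)_0 \otimes \ext^2_{\pi_P}(\II,\II)_0 \to \CO_P$ whose associated isomorphism is $h^0(\theta)$. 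This holds because $u$ is a global, $\CO_P$-linear vector field on $X\times P$, so contraction with $u$ commutes with cup product, with $R\pi_{P\ast}$ and with the trace, giving $u\lrcorner \tr\big(e\cup\At_X(\II)\big) = \tr\big(e\cup(u\lrcorner\At_X(\II))\big)$; the left-hand Serre pairing is precisely this contraction followed by $\tr_X\colon H^3(X,\CO_X)\to\BC$. Combining this with the defining property $\langle a, h^0(\theta)(\omega)\rangle_{\tr} = \omega(a)$ of the Serre-duality isomorphism yields the equality of the two maps out of $\Omega_P$, hence the commutativity of the square.

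I expect the main obstacle to be promoting this essentially formal pairing identity to an equality of morphisms of complexes over the possibly non-reduced moduli space $P$. This requires tracking the relative Verdier-duality isomorphism (iii) used to construct $\At_P(\II)$ and $\sr$, the relative Serre-duality isomorphism defining $\theta$ together with its symmetry $\theta^{\vee}[1] = \theta$, and the identification $(E^{\bullet})^{\vee} \cong R\pi_{P\ast} R\hom(\II,\II)_0[1]$, and then verifying that the squares relating cup product, trace, and contraction with the globally defined $u$ commute as natural transformations of derived functors, rather than merely on fibres. A secondary point is the sign: Lemma~\ref{Lemma_Comparision_Tangent_Vectors} matches \eqref{tv1} and \eqref{tv2} only up to sign, but since commutativity of the square is unaffected by rescaling the isomorphisms $s$ and $\theta$, the sign is harmless and need only be fixed consistently.
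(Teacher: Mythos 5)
Your proposal is correct and follows essentially the same route as the paper: factor both composites through $h^0(E^{\bullet})$, use Lemma~\ref{Lemma_Comparision_Tangent_Vectors} to rewrite $\partial^{\vee}$ as the dual of contraction with $\At_X(\II)$, identify $h^0(\theta)$ with the trace pairing, and conclude from the identity $\tr\big(f \cup (\partial_t \lrcorner \At_X(\II))\big) \wedge \dd t = \tr(f \cup \At_X(\II))$, which is exactly your adjunction $\langle h^1(\sr)(e), u\rangle = \langle u \lrcorner \At_X(\II), e\rangle_{\tr}$ since $H^{1,3}(X)$ is one-dimensional. The technical caveats you flag (tracking Verdier duality over the non-reduced $P$, the sign ambiguity) are the same ones the paper handles by citing the homotopy-formula argument of Maulik--Pandharipande--Thomas.
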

\begin{proof}
Since $E^{\bullet}$ and $(E^{\bullet})^{\vee}[1]$ are both of amplitude contained in $[-1,0]$, the horizontal maps
factor as
\[
\begin{tikzcd}
E^{\bullet} \ar{d}{\theta} \ar{r}{h^0}
& h^0(E^{\bullet}) = \ext^1_{\pi_P}(\II, \II)^{\vee} \ar{d}{h^0(\theta)} \ar{r}{h^0(\partial^{\vee})}
& H^0(T_E)^{\vee} \otimes \CO_P \ar{d}{s} \\
(E^{\bullet})^{\vee}[1] \ar{r}{h^0} & h^0((E^{\bullet})^{\vee}[1]) = \ext^2_{\pi_P}(\II,\II) \ar{r}{h^1(\sr)} & H^{1,3}(X) \otimes \CO_P.
\end{tikzcd}
\]
The square on the left commutes. We need to show the right square commutes.
By definition, $h^0(\theta)$ is the isomorphism induced by the pairing
\[ \ext^1_{\pi_P}(\II, \II) \times \ext^2_{\pi_P}(\II,\II) \xrightarrow{\cup} \ext^3(\II, \II) \xrightarrow{\tr} H^3(X, \CO_X) \otimes \CO_P. \]
By Lemma~\ref{Lemma_Comparision_Tangent_Vectors} the map $\partial^{\vee}$ is dual to taking the interior product with $\At_X(\II)$.
Hence, the claim now follows just as in \cite[Proof of Lemma 12]{MPT} by an application of the homotopy formula:
With $\partial_t \in H^0(T_X)$ a generator with dual $dt \in H^0(\Omega_X)$ we have
\[ \tr\big( f \cup ( \partial_t \,\lrcorner \At_X(\II))\big) \wedge \dd{t} = \tr( f \cup \At_X(\II) ) \]
for every $f \in \ext^2_{\pi_P}(\II, \II)$.
\end{proof}

\subsection{Symmetric complexes} \label{Section_Symmetric_complexes}
Let
\[ \CT^{\bullet} = \Cone( E^{\bullet} \xrightarrow{\partial^{\vee}} H^0(T_E)^{\vee} \otimes \CO_P )[-1] \]
and recall that the reduced complex $E^{\bullet}_{\text{red}}$
fits into the exact triangle
\[ H^{1,3}(X)^{\vee} \otimes \CO_P \xrightarrow{\sr^{\vee}} E^{\bullet} \to E_{\text{red}}^{\bullet} \,. \]
Since $\Hom(\CT, \CO_P[-1]) = 0$, the square of Lemma~\ref{Lemma_Diagram_commutes} can be uniquely completed
to the morphism of horizontal exact triangles
\begin{equation} \label{EXACTTRIANGLE1}
\begin{tikzcd}
\CT \ar{d}{\widetilde{\theta}} \ar{r} & E^{\bullet} \ar{d}{\theta} \ar{r}{\partial^{\vee}} & H^0(T_E)^{\vee} \otimes \CO_P \ar{d}{s} \\
(E_{\text{red}}^{\bullet})^{\vee}[1] \ar{r} & (E^{\bullet})^{\vee}[1] \ar{r} & H^{1,3}(X) \otimes \CO_P .
\end{tikzcd}
\end{equation}
%Since $\Hom(\CT, \CO_P[-1]) = 0$, the map $\widetilde{\theta}$ is unique, see [Yu-Manin, page 243, Corollary 4.1.5]
Since $\theta$ and $s$ are isomorphisms, % (in the derived category)
so is the induced map $\widetilde{\theta}$.

Since all maps $\CO_P[1] \to \CO_P$ vanish,
%(since $\Ext^{<0}(F, F) = 0$ for all sheaves $F$)
the semiregularity map $\sr^{\vee}$ induces a map
$H^{1,3}(X)^{\vee} \otimes \CO_P[1] \to \CT$.
Let
\[ G^{\bullet} = \Cone(H^{1,3}(X)^{\vee} \otimes \CO_P[1] \to \CT) \]
be the cone. By virtue of the commutative diagram
of vertical and horizontal exact triangles
\begin{equation} \label{BIGSQUARE}
\begin{tikzcd}
H^{1,3}(X)^{\vee} \otimes \CO_P[1] \ar{d}{\sr^{\vee}} \ar{r}{=} & H^{1,3}(X)^{\vee} \otimes \CO_P[1] \ar{d}{\sr^{\vee}} \ar{r} & 0 \ar{d} \\
\CT^{\bullet} \ar{d} \ar{r} & E^{\bullet} \ar{d} \ar{r}{\partial^{\vee}} & H^0(T_E)^{\vee} \otimes \CO_P \ar{d}{=} \\
G^{\bullet} \ar{r} & E^{\bullet}_{\text{red}} \ar{r}{\partial^{\vee}} & H^0(T_E)^{\vee} \otimes \CO_P \,,
\end{tikzcd}
\end{equation}
we also have
\begin{equation} G^{\bullet} = \Cone( E^{\bullet}_{\text{red}} \xrightarrow{\partial^{\vee}} H^0(T_E)^{\vee} \otimes \CO_P )[-1] \,. \label{Gdef2} \end{equation}
Here we let $\partial^{\vee}$ and $\sr^{\vee}$ denote also the induced morphisms.

By an argument identical to the proof of Lemma \ref{Lemma_Diagram_commutes},
the left square in the following diagram of horizontal exact triangles commutes:
\begin{equation} \label{EXACTTRIANGLE2}
 \begin{tikzcd}
  H^{1,3}(X)^{\vee} \otimes \CO_P[1] \ar{d}{s^{\vee}[1]} \ar{r}{\sr^{\vee}} & \CT^{\bullet} \ar{d}{\widetilde{\theta}} \ar{r} & G^{\bullet} \ar[dotted]{d}{\lambda} \\
 H^0(T_E) \otimes \CO_P[1] \ar{r}{\partial} & (E^{\bullet}_{\text{red}})^{\vee}[1] \ar{r} & (G^{\bullet})^{\vee}[1].
 \end{tikzcd}
\end{equation}
Let $\lambda : G^{\bullet} \to (G^{\bullet})^{\vee}[1]$ be the induced morphism
(uniquely defined since $\Hom(\CO_P[2], (G^{\bullet})^{\vee}[1]) = 0$).
As before, $\lambda$ is an isomorphism.

% (For uniqueness: Let $U = (G^{\bullet})^{\vee}[1]$. Then we have the exact sequence
% \[ \Hom( \CO_P[2], U) \to \Hom(G^{\bullet}, U) \to \Hom(\CT, U) \to \Hom( \CO_P[1], U ). \]
% Every extension $\lambda \in \Hom(G^{\bullet}, U)$ maps to the given element $n \circ \widetilde{\theta}$,
% where $n : (E^{\bullet}_{\text{red}})^{\vee}[1] \to (G^{\bullet})^{\vee}[1]$.
% The kernel is $\Hom( \CO_P[2], U ) = \Hom(\CO_P[1], (G^{\bullet})^{\vee}) = 0$. Hence $\lambda$ is unique.)

\begin{prop} \label{Proposition_Gbullet_is_symmetric} We have $\lambda^{\vee}[1] = \lambda$. 
Hence $\lambda: G^{\bullet} \to (G^{\bullet})^{\vee}[1]$
is a nondegenerate symmetric bilinear form of degree $1$
in the sense of \cite[Defn. 3.1]{B}.
\end{prop}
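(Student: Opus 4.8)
The plan is to deduce the self-duality $\lambda^{\vee}[1] = \lambda$ from the symmetry $\theta^{\vee}[1] = \theta$ of the ambient theory, propagating it through the two cone constructions \eqref{EXACTTRIANGLE1} and \eqref{EXACTTRIANGLE2}. The backbone of the argument is a uniqueness principle: because $\Hom(\CO_P[2], (G^{\bullet})^{\vee}[1]) = 0$, and by the analogous vanishing $\Hom(\CT, \CO_P[-1]) = 0$ that already produced $\widetilde{\theta}$ in \eqref{EXACTTRIANGLE1}, every morphism of distinguished triangles occurring below is \emph{uniquely} completed by its induced map on cones. It therefore suffices to exhibit $\lambda^{\vee}[1]$ as an induced completion of the very morphism of triangles \eqref{EXACTTRIANGLE2} that characterizes $\lambda$; the equality is then forced.

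First I would record the dual of Lemma~\ref{Lemma_Diagram_commutes}. Applying the contravariant functor $(-)^{\vee}[1]$, which squares to the identity on perfect complexes, to the commuting square $s \circ \partial^{\vee} = (\sr[1]) \circ \theta$ and invoking $\theta^{\vee}[1] = \theta$ produces the commuting square $\theta \circ \sr^{\vee} = \partial \circ (s^{\vee}[1])$, where $\partial$ and $\sr^{\vee}$ denote the evident dual-shifted maps. Thus the compatibility square of Lemma~\ref{Lemma_Diagram_commutes} and its transpose are interchanged by the duality, which is exactly what lets the construction close up on itself.

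Next I would dualize the defining diagram \eqref{EXACTTRIANGLE1} of $\widetilde{\theta}$. Since $(-)^{\vee}[1]$ sends distinguished triangles to distinguished triangles, the result is again a morphism of triangles; identifying its rows with the defining triangle $H^{1,3}(X)^{\vee} \otimes \CO_P[1] \xrightarrow{\sr^{\vee}} E^{\bullet} \to E_{\text{red}}^{\bullet}$ of $E_{\text{red}}^{\bullet}$ and with the dual of the top row of \eqref{EXACTTRIANGLE1}, and feeding in the dual square of the previous step together with $\theta^{\vee}[1] = \theta$, one finds that $\widetilde{\theta}^{\vee}[1]$ is the unique cone map induced by the pair $(s^{\vee}[1], \theta)$. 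Under the canonical identification this says precisely that $\widetilde{\theta}^{\vee}[1]$ is the transpose of $\widetilde{\theta}$, so $\widetilde{\theta}$ is compatible with duality in the form required to re-enter \eqref{EXACTTRIANGLE2}. Dualizing \eqref{EXACTTRIANGLE2} in turn, $\lambda^{\vee}[1]$ appears as the leftmost vertical of a morphism of triangles; using \eqref{Gdef2} and \eqref{BIGSQUARE} to reconcile the two cone presentations of $G^{\bullet}$, as $\Cone(\sr^{\vee})$ and as $\Cone(\partial^{\vee})[-1]$, together with the identity $(s^{\vee}[1])^{\vee}[1] = s$ and the transpose compatibility of $\widetilde{\theta}$ just established, one rewrites this as the morphism of triangles \eqref{EXACTTRIANGLE2} with $\lambda^{\vee}[1]$ now occupying the slot of the uniquely induced cone map. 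The uniqueness forced by $\Hom(\CO_P[2], (G^{\bullet})^{\vee}[1]) = 0$ then gives $\lambda^{\vee}[1] = \lambda$. Since $\lambda$ is already known to be an isomorphism, it is a nondegenerate symmetric bilinear form of degree $1$ in the sense of \cite[Defn.~3.1]{B}.

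The step I expect to be the main obstacle is this last matching of the dualized \eqref{EXACTTRIANGLE2} with the original. It requires keeping exact track of all the shifts and of the several canonical isomorphisms $(-)^{\vee}[1]\circ(-)^{\vee}[1] = \id$, rotating the dualized triangles so that $G^{\bullet}$ and $(G^{\bullet})^{\vee}[1]$ once more occupy the cone vertices via \eqref{BIGSQUARE}, and checking at each stage that the governing $\Hom$-group vanishes so that the induced arrow really is unique. None of these points is deep, but it is in the bookkeeping that an error could hide; in particular it is essential that the duality-compatibility of $\widetilde{\theta}$ extracted from \eqref{EXACTTRIANGLE1} matches the middle vertical arrow of the dualized \eqref{EXACTTRIANGLE2} on the nose.
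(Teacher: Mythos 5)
Your overall strategy coincides with the paper's: dualize the two defining diagrams \eqref{EXACTTRIANGLE1} and \eqref{EXACTTRIANGLE2} and use the $\Hom$-vanishings to pin down the induced maps. The dualizations themselves are correct and yield the transposed relations: writing $\iota_\CT : \CT^{\bullet} \to E^{\bullet}$, $q_{\text{red}} : E^{\bullet} \to \Ered^{\bullet}$, $q_G : \CT^{\bullet} \to G^{\bullet}$, $\iota_G : G^{\bullet} \to \Ered^{\bullet}$ for the maps in \eqref{BIGSQUARE}, one obtains $\widetilde{\theta}^{\vee}[1] \circ q_{\text{red}} = \iota_\CT^{\vee}[1] \circ \theta$ and $q_G^{\vee}[1] \circ \lambda^{\vee}[1] = \widetilde{\theta}^{\vee}[1] \circ \iota_G$. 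The gap is in the final step. The dualized \eqref{EXACTTRIANGLE2} is a morphism between the triangles $G^{\bullet} \to \Ered^{\bullet} \to \CO_P$ and $(G^{\bullet})^{\vee}[1] \to (\CT^{\bullet})^{\vee}[1] \to \CO_P$, and it characterizes $\lambda^{\vee}[1]$ through \emph{post}composition with $q_G^{\vee}[1]$; the original \eqref{EXACTTRIANGLE2} characterizes $\lambda$ through \emph{pre}composition with $q_G$, relative to the \emph{other} triangle $\CO_P[1] \to \CT^{\bullet} \to G^{\bullet}$ containing $G^{\bullet}$. These two triangles are not related by rotation but only through the $3\times 3$ diagram \eqref{BIGSQUARE}, so no uniqueness statement lets you simply place $\lambda^{\vee}[1]$ "in the slot of" $\lambda$. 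Taken literally, this step is the heuristic that cones commute with duality because each induced map is unique, which is exactly the argument the remark preceding the proposition flags as false in general.

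What is actually needed, and what would complete your argument, is to combine the two characterizations. The map $\Hom(G^{\bullet}, (G^{\bullet})^{\vee}[1]) \to \Hom(\CT^{\bullet}, (\CT^{\bullet})^{\vee}[1])$ sending $\mu \mapsto q_G^{\vee}[1] \circ \mu \circ q_G$ is injective by the vanishings $\Hom(\CO_P[2], (G^{\bullet})^{\vee}[1]) = 0$ and $\Hom(\CT^{\bullet}, \CO_P[-1]) = 0$, and one must check that both $\lambda$ and $\lambda^{\vee}[1]$ are sent to $\iota_\CT^{\vee}[1] \circ \theta \circ \iota_\CT$: for $\lambda$ this uses its defining relation $\lambda \circ q_G = \iota_G^{\vee}[1] \circ \widetilde{\theta}$ together with the commutativity $\iota_G \circ q_G = q_{\text{red}} \circ \iota_\CT$ from \eqref{BIGSQUARE} and the defining relation of $\widetilde{\theta}$; for $\lambda^{\vee}[1]$ it uses the two transposed relations above and the same commutativity. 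This short composite computation is the entire content of the paper's proof; your proposal asserts its conclusion but does not carry it out, and the "rotating and reconciling via \eqref{BIGSQUARE}" you describe does not substitute for it.
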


\noindent \textbf{Remark.}
By definition, we have 
\[
\widetilde{\theta} = \Cone( \theta \to s )[-1]
\quad \text{and} \quad
\lambda = \Cone( s^{\vee}[1] \to \widetilde{\theta} ) \,.
\]
Dualizing \eqref{EXACTTRIANGLE1} and using $\theta^{\vee}[1] = \theta$ one has
$\widetilde{\theta}^{\vee}[1]
= \Cone( s^{\vee}[1] \to \theta )$, and dualizing \eqref{EXACTTRIANGLE2} yields
$\lambda^{\vee}[1] = \Cone( \widetilde{\theta}^{\vee}[1] \to s )[-1]$.
Since the cones are unique in each case,
we expect taking cones and cocones to commute\footnote{
In general, the cone of a map of morphisms in a triangulated category is not uniquely defined.
A general statement of this form is false.} and hence $\lambda \cong \lambda^{\vee}[1]$.
The following proof makes this a rigorous argument.

\begin{proof}
We write $E, E_{\text{red}}, \CT, G$ for $E^{\bullet}, E^{\bullet}_{\text{red}}, \CT^{\bullet}, G^{\bullet}$ respectively,
and identify $H^0(T_E)^{\vee} = H^{1,3}(X) = \BC$.
Let 
\begin{equation} \label{MFbFBDF}
\begin{tikzcd}
\CT \ar{r}{\iota_{\CT}} \ar{d}{q_G} & E \ar{d}{q_{\text{red}}} \\
G \ar{r}{\iota_G} & E_{\text{red}}.
\end{tikzcd}
\end{equation}
denote the lower left square in the commutative diagram \eqref{BIGSQUARE}.

We recall the definition of $\widetilde{\theta}$ through the diagram \eqref{EXACTTRIANGLE1}.
Since we have $\Hom(\CT, \CO_P[-1]) = 0$, the bottom row of the diagram
\[
\begin{tikzcd}
& \Hom(E, E^{\vee}[1]) \ar{d}{ \cdot\, \circ\, \iota_T } \\
0 \to \Hom(T, \Ered^{\vee}[1]) \ar{r}{ q_{\text{red}}^{\vee}[1]\, \circ\, \cdot} & \Hom(T, E^{\vee}[1])
\ar{r}{ \sr[1] \, \circ \, \cdot } & \Hom(\CT, \CO_P)
\end{tikzcd}
\]
is exact, where we have written $f \circ \cdot$ (resp. $\cdot \circ f$) for the map obtained from composing (resp. precomposing) with a morphism $f$.
By the commutativity of \eqref{EXACTTRIANGLE1} we have
$\sr[1] \circ \theta \circ \iota_\CT = 0$. The morphism $\widetilde{\theta}$ is then defined as the unique element in $\Hom(\CT, \Ered^{\vee}[1])$ such that
\begin{equation} q_{\text{red}}^{\vee}[1] \circ \widetilde{\theta} = \theta \circ \iota_\CT. \label{eqni} \tag{i} \end{equation}

Similarly, from \eqref{EXACTTRIANGLE2} we have the diagram
\[
\begin{tikzcd}
& \Hom(\CT, \Ered^{\vee}[1]) \ar{d}{ \iota_G^{\vee}[1] \, \circ \, \cdot} \\
0 \to \Hom(G, G^{\vee}[1]) \ar{r}{ \cdot \, \circ \, q_G } & \Hom(\CT, G^{\vee}[1])
\ar{r}{ \cdot \, \circ \, \sr^{\vee}} & \Hom(\CO[1], G^{\vee}[1])
\end{tikzcd}
\]
with exact bottom row.
%where the bottom row is exact and we used $\Hom(\CO_P[2], G^{\vee}[1]) = 0$ for the zero on the left.
Since the left square in \eqref{EXACTTRIANGLE2} commutes, we have
$\iota_G^{\vee}[1] \circ \widetilde{\theta} \circ \sr^{\vee} = 0$. The morphism $\lambda \in \Hom(G, G^{\bullet}[1])$
is then defined by
\begin{equation} \lambda \circ q_G = \iota_G^{\vee}[1] \circ \widetilde{\theta} . \tag{ii} \label{eqnii} \end{equation}
Dualizing the diagrams \eqref{EXACTTRIANGLE1} and \eqref{EXACTTRIANGLE2} and using $\theta^{\vee}[1] = \theta$
one also has
\begin{align}
\widetilde{\theta}^{\vee}[1] \circ q_{\text{red}} & = \iota_\CT^{\vee}[1] \circ \theta  \label{eqniii} \tag{iii} \\
q_G^{\vee}[1] \circ \lambda^{\vee}[1] & = \widetilde{\theta}^{\vee}[1] \circ \iota_G. \label{eqniv} \tag{iv}
\end{align}

We show $\lambda^{\vee}[1] = \lambda$.
Since $\Hom(\CO_P[2], G^{\vee}[1]) = \Hom(\CO_P[2], \CT^{\vee}[1]) = 0$ and $\Hom(G, \CO_P[-1]) = \Hom( \CT, \CO_P[-1]) = 0$ we have
the diagram of exact rows and columns
\[
\begin{tikzcd}
0 \ar{r} & \Hom(G, \CT^{\vee}[1]) \ar{r}{ \cdot \, \circ \, q_G } & \Hom(\CT, \CT^{\vee}[1]) \\
0 \ar{r} & \Hom(G, G^{\vee}[1]) \ar{u}{q_G^{\vee}[1]\, \circ\, \cdot} \ar{r}{ \cdot \, \circ \, q_G } & \Hom( \CT, G^{\vee}[1]) \ar{u}{q_G^{\vee}[1]\, \circ\, \cdot} \\
& 0 \ar{u} & 0 \ar{u}.
\end{tikzcd}
\]
Hence it is enough to show
$q_{G}^{\vee}[1] \circ \lambda^{\vee}[1] \circ q_G = q_{G}^{\vee}[1] \circ \lambda \circ q_G$ as an element of $\Hom( \CT, \CT^{\vee}[1] )$.
But we have the series of equivalences
\begin{alignat*}{3}
&&  q_{G}^{\vee}[1] \circ \lambda^{\vee}[1] \circ q_G & = q_{G}^{\vee}[1] \circ \lambda \circ q_G \\
\Longleftrightarrow & \quad &
(\widetilde{\theta}^{\vee}[1] \circ \iota_G) \circ q_G
& = q_G^{\vee}[1] \circ ( \iota_G^{\vee}[1] \circ \widetilde{\theta} )
& (\text{by } \eqref{eqnii} \text{ and } \eqref{eqniv}) \\
\Longleftrightarrow & \quad &
\widetilde{\theta}^{\vee}[1] \circ q_{\text{red}} \circ \iota_T
& = \iota_{\CT}^{\vee}[1] \circ q_{\text{red}}^{\vee}[1] \circ \widetilde{\theta}
& \quad (\text{by commutativity of } \eqref{MFbFBDF}) \\
\Longleftrightarrow & \quad &
\iota_\CT^{\vee}[1] \circ \theta \circ \iota_{\CT}
& = \iota_{\CT}^{\vee}[1] \circ \theta \circ \iota_{\CT}
& (\text{by } \eqref{eqni} \text{ and } \eqref{eqniii})
\end{alignat*}
so the proof is complete.
\end{proof}

\begin{lemma} \label{Lemma_Gbullet_perfect} The complex $G^{\bullet}$ is perfect of amplitude contained in $[-1,0]$. \end{lemma}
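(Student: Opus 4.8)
The plan is to deduce everything from the exact triangle \eqref{Gdef2},
\[
G^{\bullet} \to E^{\bullet}_{\text{red}} \xrightarrow{\ \partial^{\vee}\ } H^0(T_E)^{\vee} \otimes \CO_P \to G^{\bullet}[1],
\]
together with the fact, from Proposition~\ref{Proposition_Reduced_Perfect_Obstruction_Theory}, that $E^{\bullet}_{\text{red}}$ is a perfect obstruction theory and is therefore perfect of amplitude contained in $[-1,0]$. Perfectness of $G^{\bullet}$ is then immediate: it is the shifted cone of a morphism between the perfect complex $E^{\bullet}_{\text{red}}$ and the vector bundle $H^0(T_E)^{\vee}\otimes\CO_P$, and cones of morphisms of perfect complexes are perfect. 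To bound the Tor-amplitude I would test on derived fibers: writing $H = H^0(T_E)^{\vee}\otimes\CO_P$, it suffices to show $H^i(G^{\bullet}\otimes^{L}k(x)) = 0$ for every closed point $x\in P$ and every $i\notin\{-1,0\}$.

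Applying $-\otimes^{L}k(x)$ to \eqref{Gdef2} gives the long exact sequence
\[
\cdots \to H^{i-1}(H\otimes^{L}k(x)) \to H^{i}(G^{\bullet}\otimes^{L}k(x)) \to H^{i}(E^{\bullet}_{\text{red}}\otimes^{L}k(x)) \to \cdots .
\]
Since $H$ is a vector bundle in degree $0$ and $E^{\bullet}_{\text{red}}\otimes^{L}k(x)$ is concentrated in degrees $[-1,0]$, both outer terms vanish for $i\le -2$ and for $i\ge 2$, whence $H^{i}(G^{\bullet}\otimes^{L}k(x))=0$ in those degrees. The only remaining degree is $i=1$, where the sequence reads
\[
H^{0}(E^{\bullet}_{\text{red}}\otimes^{L}k(x)) \xrightarrow{\ \alpha_x\ } H^{0}(H\otimes^{L}k(x)) \to H^{1}(G^{\bullet}\otimes^{L}k(x)) \to 0 .
\]
By right-exactness of $-\otimes^{L}k(x)$ we have $H^{0}(E^{\bullet}_{\text{red}}\otimes^{L}k(x)) = h^{0}(E^{\bullet}_{\text{red}})\otimes k(x)$ and $\alpha_x = h^{0}(\partial^{\vee})\otimes k(x)$. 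Hence $G^{\bullet}$ has amplitude contained in $[-1,0]$ precisely when the map on zeroth cohomology sheaves $h^{0}(\partial^{\vee}) : h^{0}(E^{\bullet}_{\text{red}}) \to H^0(T_E)^{\vee}\otimes\CO_P$ is surjective.

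It remains to prove this surjectivity, which is the only place the geometry enters and hence the main point. First, $h^{0}(E^{\bullet}_{\text{red}}) = h^{0}(E^{\bullet}) = \Omega_P$: the first identity follows from the cohomology sequence of the triangle $H^{1,3}(X)^{\vee}\otimes\CO_P[1]\to E^{\bullet}\to E^{\bullet}_{\text{red}}$, whose left term sits in degree $-1$, and the second is the cut-off identification, under which $h^{0}(\partial^{\vee})$ is the dual of the tangent vector map \eqref{tv1}. I would verify surjectivity fiberwise and conclude by Nakayama. At a closed point $I\in P$ one has $\Omega_P\otimes k(I) = \mathfrak{m}_I/\mathfrak{m}_I^{2} = (T_I P)^{\vee}$, and under this tautological pairing the fiber $h^{0}(\partial^{\vee})\otimes k(I)$ is the dual of the differential at $0_E$ of the orbit map $E\to P$, $x\mapsto I+x$, i.e.\ of \eqref{tv1} evaluated at $I$. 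Because $E$ acts on $P$ with finite stabilizers, the stabilizer of $I$ is $0$-dimensional, so this orbit-map differential $H^0(T_E)=T_{0_E}E \to T_I P$ is injective; dualizing, $h^{0}(\partial^{\vee})\otimes k(I)$ is surjective. As this holds at every closed point and the target is locally free, Nakayama's lemma yields surjectivity of $h^{0}(\partial^{\vee})$, completing the proof. The homological bookkeeping is entirely formal; the substance lies in translating the finite-stabilizer hypothesis into the fiberwise injectivity of the generating vector field of the $E$-action, and this is the step I expect to require the most care.
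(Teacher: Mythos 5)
Your proof is correct, and it takes a genuinely different route from the paper's, although the geometric crux -- that finite stabilizers of the $E$-action force the generating vector field \eqref{tv1} to be injective on every fiber -- is identical in both arguments. The paper works dually and globally: it chooses a two-term vector bundle presentation $[E_0 \to E_1]$ of $(E^{\bullet})^{\vee}$, observes that the vector field gives a subbundle $\CO_P \hookrightarrow E_0$ (fiberwise injectivity again) while the semiregularity map gives a surjection $E_1 \to \CO_P$, and defines $G_0$ and $G_1$ as the corresponding quotient and kernel, so that $[G_0 \to G_1]$ is an explicit global locally free presentation of $(G^{\bullet})^{\vee}$; it thus handles the semiregularity reduction and the $E$-quotient in a single step starting from the \emph{unreduced} theory. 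You instead start from the triangle \eqref{Gdef2}, take the perfectness and amplitude of $E^{\bullet}_{\text{red}}$ from Proposition~\ref{Proposition_Reduced_Perfect_Obstruction_Theory} (i.e.\ Kool--Thomas) as a black box -- which absorbs the semiregularity input -- and verify Tor-amplitude via derived fibers, reducing everything to the surjectivity of $h^0(\partial^{\vee})$, which you correctly identify with the dual of \eqref{tv1} and establish from the finite-stabilizer hypothesis. What the paper's route buys is the explicit global resolution $[G_0 \to G_1]$, which it actually uses later in the proof of Proposition~\ref{Proposition_formula_for_vir_class}; your route delivers Tor-amplitude in $[-1,0]$, from which a global two-term locally free representative still follows because $P$ is projective, so nothing essential is lost. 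Your Nakayama step is also doing real work: it upgrades surjectivity at closed points to surjectivity of the sheaf map, and hence on the fibers over \emph{all} (not necessarily closed) points, which is what the Tor-amplitude criterion formally requires.
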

\begin{proof}
Let
$(E^{\bullet})^{\vee} = [ E_0 \to E_1 ]$
be a presentation of $(E^{\bullet})^{\vee}$ by vector bundles $E_0, E_1$.
The inclusion \eqref{tv1} yields the injection % (i.e. the map is injective at all points)
\begin{equation} \label{vfinj} \CO_P \to h^0(E_{\bullet}) \to E_0 \end{equation}
and the semiregularity map \eqref{SRMAP} yields the surjection
\begin{equation} \label{obsquotient} E_1 \to E_1 / E_0 = \text{Ob} \to \CO_P \,. \end{equation}
Let $G_0$ be the quotient of \eqref{vfinj} and let $G_1$ be the kernel of \eqref{obsquotient}.
Since $G_1$ is the kernel of a surjective map of vector bundles, it is a vector bundle.
The injection \eqref{vfinj} arises from the infinitesimal action of $E$ on the moduli space.
Since $E$ acts with finite stabilizers, \eqref{vfinj}
is injective at all closed points and defines a
subbundle of $E_0$; hence $G_0$ is a vector bundle.\footnote{Alternatively, \eqref{vfinj}
is dual to the surjection \eqref{obsquotient} via a local selfdual representative $E_0 \to E_1^{\vee}$ of the
non-degenerate form on $E^{\bullet}$ \cite[Section 3]{B}. Hence $G_0$ and $G_1$ are their respective duals.}; 
Since $\CO_P \to E_0 \to E_1$ vanishes, there exist an induced map $G_0 \to E_1$.
Since also $G_0 \to E_1 \to \CO_P$ vanishes we obtain an induced map $G_0 \to G_1$, see the diagram
\[
\begin{tikzcd}
0 \ar{r} & \CO_P \ar{r} & E_0 \ar{r} \ar{d} & G_0 \ar{r} \ar[dotted]{d} & 0 \\
0 & \CO_P \ar{l} & E_1 \ar{l}           & G_1 \ar{l} & 0 \ar{l} \,.
\end{tikzcd}
\]
The complex $[G_0 \to G_1]$ is a presentation of $(G^{\bullet})^{\vee}$ by vector bundles.
\end{proof}

\section{Isotrivial fibrations} \label{Section_Isotrivial_fibrations}
\subsection{Overview}
Let $\beta \in \Pic(S)$ be a curve class, let $d, n \in \BZ$ with $d \geq 0$ and let
$P = P_n(X,(\beta,d))$. In this section we will \emph{not} require $\beta$ to be non-zero.

For all $k \in \BZ$ let  
\[ \sigma_k = + \circ (k \times \id_E) : E \times E \to E,\ \ (x, e) \mapsto e + k x \]
be the action of $E$ on itself by $k$ times the translation. Recall also from Section \ref{Section_Elliptic_Curve_Action} the action
of $E$ on $P$ by translation
\[ m_P : E \times P \to P, \ (x, I) \mapsto I + x = t_{x \ast} I \,. \]

Here we will prove the following result.
\begin{prop} \label{Proposition_Map_p} Let $\mathsf{c} \in \Pic(S)$ be an element
and let $k = \langle \mathsf{c}, \beta \rangle + n$.
There eixst a morphism
\[ p_{\mathsf{c}} \colon P_n(X, (\beta,d)) \to E \]
which is $E$-equivariant with respect to $m_P$ and $\sigma_k$. In particular,
for all $I \in P$ and $x \in E$ we have $p_{\mathsf{c}}(I + x) = p_{\mathsf{c}}(I) + kx$.
\end{prop}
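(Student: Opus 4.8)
The plan is to realize $p_{\mathsf c}$ as an Abel--Jacobi map attached to a determinant line bundle, in the spirit of Gulbrandsen's construction on abelian threefolds but using the auxiliary line bundle $\CO_S(\mathsf c)$ pulled back from the K3 factor (this is precisely the role of the ``extra line bundles from the K3 surface'' mentioned in the introduction). Let $L$ be the pullback of $\CO_S(\mathsf c)$ to $X \times P = S \times E \times P$, and let $\pr \colon X \times P \to E \times P$ be the projection forgetting the $S$-factor. Since $\II \otimes L$ is perfect and $\pr$ is smooth and proper, $R\pr_{\ast}(\II \otimes L)$ is a perfect complex on $E \times P$, and the Knudsen--Mumford determinant produces a line bundle
\[ \mathcal M := \det\!\big( R\pr_{\ast}(\II \otimes L) \big) \]
on $E \times P$ (I replace $\mathcal M$ by its dual if needed to fix signs). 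As $E \times P \to P$ is a constant family of elliptic curves, $\mathcal M$ defines a morphism $P \to \Pic_{E \times P / P}$ by the universal property of the relative Picard scheme; composing with the identification $a_k \colon \Pic^k(E) \xrightarrow{\sim} E$, $a_k(\mathcal L) = \AJ(\mathcal L \otimes \CO_E(-k\,0_E))$ (where $\AJ \colon \Pic^0(E) \xrightarrow{\sim} E$ sends $\CO_E(e - 0_E)$ to $e$), I obtain $p_{\mathsf c} \colon P \to E$ — provided $\mathcal M$ has fibrewise degree $k$.

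I would verify the fibrewise degree by Grothendieck--Riemann--Roch. Writing $[\II] = [\CO_X] - [\mathbb F]$, the $\CO_X$-summand gives $\det R\pr_{\ast}L$, which is pulled back from $E$ and fibrewise trivial, hence contributes $0$. For the universal sheaf $\mathbb F$, restricting to a point $I$ and using that the relative tangent of $\pi_2 \colon S \times E \to E$ is $\pi_1^{\ast}T_S$,
\[ \deg_E \det R\pi_{2\ast}(\mathbb F \otimes L) = \int_X \big[\, \ch(\mathbb F)\,\ch(L)\,\pi_1^{\ast}\td(S) \,\big]_{6}, \]
the subscript denoting the top (degree $6$) component. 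Since $c_1(S) = 0$, the surviving terms are $\int_X \ch_3(\mathbb F) = \chi(\mathbb F) = n$ and $\int_X \ch_2(\mathbb F) \cup \pi_1^{\ast}\mathsf c = \langle \mathsf c, \beta \rangle$ (the vertical part $d[E]$ of the support class pairs trivially with $\pi_1^{\ast}\mathsf c$, and all other products land beyond degree $6$). This yields exactly $k = \langle \mathsf c, \beta \rangle + n$, so $p_{\mathsf c}$ is well defined.

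For equivariance I would use $I + x = t_{x\ast}I$ together with the translation-invariance $t_x^{\ast}L \cong L$ (as $\pi_1 \circ t_x = \pi_1$). Because $\pi_2 \circ t_x = \tau_x \circ \pi_2$ with $\tau_x$ translation by $x$ on $E$, the projection formula gives
\[ R\pi_{2\ast}\big( t_{x\ast}I \otimes L \big) \cong \tau_{x\ast} R\pi_{2\ast}(I \otimes L), \]
so $\mathcal M|_{E \times \{I+x\}} \cong \tau_{x\ast}\big( \mathcal M|_{E \times \{I\}} \big)$. A degree-$k$ line bundle $\mathcal L \cong \CO_E(p_1 + \dots + p_k)$ satisfies $\tau_{x\ast}\mathcal L \cong \CO_E\big(\sum_i (p_i + x)\big)$, whence $a_k(\tau_{x\ast}\mathcal L) = \sum_i p_i + kx = a_k(\mathcal L) + kx$; this is where the factor $k$ enters, as the degree of $\mathcal M$. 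On points this gives $p_{\mathsf c}(I + x) = p_{\mathsf c}(I) + kx$. To upgrade this to an identity of morphisms $p_{\mathsf c} \circ m_P = \sigma_k \circ (\id_E \times p_{\mathsf c})$, I would run the same computation over the base $E \times P$, replacing the pointwise identity $I + x = t_{x\ast}I$ by the family isomorphism \eqref{34134}, and conclude by the universal property of the relative Picard scheme.

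I expect the GRR degree count and the pointwise equivariance to be routine. The main obstacle will be the family-theoretic bookkeeping: ensuring that $\mathcal M$ is constructed as a determinant that is compatible with base change, and that the pointwise isomorphism $\mathcal M|_{E \times \{I+x\}} \cong \tau_{x\ast}\mathcal M|_{E \times \{I\}}$ globalizes — compatibly with both the determinant functor and the Abel--Jacobi identification — to an honest equality of morphisms $E \times P \to E$. This requires tracking the cocycle/descent data underlying \eqref{34134} with care.
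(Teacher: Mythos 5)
Your construction is correct, but it takes a genuinely different route from the paper's. The paper first applies the Fourier--Mukai transform $\Phi_{\CP_X}$ attached to the Poincar\'e bundle, landing on $\widehat{X}=S\times\widehat{E}$, then twists by $\pi_S^{\ast}\CL$, pushes forward to $\widehat{E}$ and takes the determinant to obtain a point of $\Pic^m(\widehat{E})\cong E$. There the equivariance is essentially formal: by the intertwining property $\Phi_{\CP_X}(t_x^{\ast}\CE)=\pi_{\widehat{E}}^{\ast}\CP_{-x}\otimes\Phi_{\CP_X}(\CE)$, translation only twists by a degree-zero line bundle, and the determinant raises $\CP_{-x}$ to the power of the \emph{rank} of the complex on $\widehat{E}$, which the cohomological Fourier--Mukai computation of Lemma \ref{Psi_on_cohomology} identifies (up to sign) with $k=\langle\mathsf{c},\beta\rangle+n$. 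You bypass the dual elliptic curve entirely: you push $\II\otimes L$ forward to $E\times P$ directly, so that $k$ appears instead as the fibrewise \emph{degree} of the determinant line bundle $\mathcal{M}$ (your GRR count is the exact analogue of Lemma \ref{Psi_on_cohomology}), and the equivariance rests on Abel's theorem / the theorem of the square, namely that $\tau_{x\ast}$ acts on $\Pic^k(E)\cong E$ as translation by $kx$. What the paper's detour buys is that the action of translation on the target is linearized from the start (twisting by $\CP_{-x}$), whereas your version needs the classical statement about translations acting on $\Pic^k$; what your version buys is a more elementary construction with no Poincar\'e bundle. Both proofs leave the same residual bookkeeping --- promoting the pointwise identity to an equality of morphisms via the family isomorphism \eqref{34134} and the universal property of the Picard scheme --- which the paper dispatches in one sentence and which your final paragraph correctly isolates. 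The one caveat is the sign you already flag: with $\II=[\CO\to\mathbb{F}]$ the determinant you write down has fibrewise degree $-k$, so one must dualize $\mathcal{M}$ (or replace $\CL$ by $\CL^{-1}$), exactly as you anticipate.
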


Let $p_{\mathsf{c}}$ be morphism as in Proposition~\ref{Proposition_Map_p}
and assume $k = \langle \mathsf{c}, \beta \rangle + n \neq 0$.
Let $\CK = p_{\mathsf{c}}^{-1}(0_E)$ be the fiber over $0_E \in E$.
Then we have the fiber diagram
\[
\begin{tikzcd}
E \times \CK \ar{d}{\pi_{E}} \ar{r}{m_{\CK}} & P \ar{d}{p_{\mathsf{c}}} \\
E \ar{r}{k} & E
\end{tikzcd}
\]
where $\pi_E$ is the projection to the first factor
and $m_\CK$ is the restriction of $m_P$ to $E \times \CK$.
Hence $p_{\mathsf{c}}$ is an \'etale isotrivial fibration.

\subsection{Construction of $p_{\mathsf{c}}$}
Let $\widehat{E} = {\mathrm Pic}^0(E)$ be the dual of the elliptic curve $(E,0)$,
and let $\CP \to E \times \widehat{E}$
be the Poincar\'e line bundle defined by the conditions:
\begin{enumerate}
 \item[(i)] For all $\xi \in \Pic^0(E)$ the restriction $\CP_{\xi} = \CP|_{E \times \xi}$ is isomorphic to $\xi$,
 \item[(ii)] $\CP|_{0 \times \widehat{E}} \cong \CO_{\widehat{E}}$.
\end{enumerate}
It follows that the restriction $\CP_x = \CP|_{x \times \widehat{E}}$
is isomorphic to $x \in E$ under the identification $\Pic^0(\widehat{E}) \equiv E$.

Let $\widehat{X} = S \times \widehat{E}$ and let
$\CP_X \to X \times \widehat{X}$ denote the pullback of the Poincar\'e
line bundle by the natural projection
$X \times \widehat{X} \to E \times \widehat{E}$.
Define the Fourier-Mukai transform
\begin{equation*}
\Phi_{\CP_X} : D^b(X) \to D^b(\widehat{X}), \  \CE \mapsto Rq_{\ast}( \CP_X \otimes p^{\ast}(\CE))
\label{PhiP}
\end{equation*}
where $X \xleftarrow{p} X \times \widehat{X} \xrightarrow{q} \widehat{X}$ are the respective projections.

Let $\CL$ be a line bundle on $S$ with first Chern class $c_1(\CL) = \mathsf{c}$,
and let $\pi_S : \widehat{X} \to S$ be the projection. Tensoring with $\pi_S^{\ast} \CL$ defined the map
\[ ( \, \cdot \, ) \otimes \pi_S^{\ast} \CL : D^b( \widehat{X} ) \to D^b( \widehat{X} ),\ \CE \mapsto \CE \otimes \pi_S^{\ast} \CL. \]
Finally, let $\pi_{\widehat{E}} : \widehat{X} \to \widehat{E}$ be the projection to $\widehat{E}$.

We define $p_{\mathsf{c}}$ on $\BC$-valued points by the composition
\begin{multline} \label{compoo}
P(\BC) = P_n(X, (\beta,d))(\BC) \ \hookrightarrow \ D^b(X)
\xrightarrow{\Phi_{\CP_X}} D^b(\widehat{X}) \\
\xrightarrow{ (\ \cdot\ ) \otimes \pi_{S}^{\ast} \CL } D^b(\widehat{X})
\xrightarrow{ R \pi_{\widehat{E}\, \ast} } D^b(\widehat{E})
\xrightarrow{ \det } \Pic^m(\widehat{E}) = E
\end{multline}
for some fixed $m$ (determined by $(\beta, d)$ and $n$).
Here the first map is the natural inclusion obtained by considering stable pairs
as elements in the derived category, and the identification $\Pic^m(\widehat{E}) = E$
is obtained by twisting with $m$ times the ideal sheaf of the zero $0_{\widehat{E}} \in \widehat{E}$.

We construct $p_{\mathsf{c}}$ as an algebraic map
by giving a line bundle
on the product $P_n(X, (\beta,d)) \times \widehat{E}$
and applying the universal property of $\Pic^m(\widehat{E})$.
Let $\II$ be the universal stable pair over $X \times P$ and consider the element
\begin{equation} R \pi_{\widehat{E} \times P\, \ast} \left( \Phi_{\CP_{X \times P}}(\II) \otimes \pi_{S}^{\ast} \CL \right) \label{Asdsd} \end{equation}
obtained from applying the first four maps of \eqref{compoo} (considered as maps \emph{relative} to the base $P$) to $\II$.
Since $\II$ is a perfect complex \cite{PT1} and
derived pushforward by smooth projective maps preserves perfectness \cite[Prop. 2.1.10]{HL}
%\cite[\href{http://stacks.math.columbia.edu/tag/0A1E}{Tag 0A1E}]{stacks},
the complex \eqref{Asdsd} is perfect. We let
\[ p_{\mathsf{c}} : P_n(X, (\beta,d)) \to \Pic^m(\widehat{E}) = E \]
be the map induced by the determinant of \eqref{Asdsd}.
By construction, $p_{\mathsf{c}}$ agrees with the previous definition on $\BC$-valued points.

\subsection{Properties of $p_{\mathsf{c}}$}
Let $\pt \in H^4(S ,\BZ)$ and $\omega \in H^2(E,\BZ)$ be the class of a point on $S$ and $E$ respectively.
We will freely use the identification induced by the K\"unneth decomposition
\[ H^{\ast}(X, \BQ) = H^{\ast}(S , \BQ) \otimes H^{\ast}(E, \BQ) \,. \]

Let $\CE \in D^b(X)$ be an element with Chern character
\[ \ch(\CE) = (r, \ell + a \omega, \ell' \omega + d \pt, n \pt \omega) 
\in \oplus_{i=0}^{3} H^{2i}(S \times E, \BQ) \]
for some $r,a,d,n \in \BZ$ and $\ell, \ell \in \Pic(S)$.

\begin{lemma} \label{Psi_on_cohomology}
The complex
$\psi(\CE) = R \pi_{\widehat{E}\, \ast} \left( \Phi_{\CP_{X}}(\II) \otimes \pi_{S}^{\ast} \CL \right) \in D^b(\widehat{E})$
has Chern characters
\begin{align*}
\ch_0( \psi(\CE) ) & = a \big( 2 + \frac{1}{2} \langle \mathsf{c}, \mathsf{c} \rangle \big) + \langle \mathsf{c}, \ell' \rangle + n \\
\ch_1( \psi(\CE) ) & = \Big[ r \left( - 2 - \frac{1}{2} \langle \mathsf{c}, \mathsf{c} \rangle \right) - \langle \ell, \mathsf{c} \rangle - d \Big] \omega
\end{align*}
\end{lemma}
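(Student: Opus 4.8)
The plan is to realize the entire construction $\psi$ as a single Fourier--Mukai-type integral transform relative to $S$ and to apply Grothendieck--Riemann--Roch (GRR) exactly once. Set $Z = S \times E \times \widehat{E}$ with the projections $p_{SE} : Z \to X$, $p_{E\widehat{E}} : Z \to E \times \widehat{E}$, $p_S : Z \to S$ and $\pi_{\widehat{E}} : Z \to \widehat{E}$. Because the intermediate pushforward to $S \times \widehat{E}$ composes with $R\pi_{\widehat{E}\ast}$ into the single pushforward over the fibre $S \times E$, the composition of $\Phi_{\CP_X}$, tensoring by $\pi_S^{\ast}\CL$, and $R\pi_{\widehat{E}\ast}$ becomes
\[
\psi(\CE) \;=\; R\pi_{\widehat{E}\ast}\big( p_{SE}^{\ast}\CE \otimes p_{E\widehat{E}}^{\ast}\CP \otimes p_S^{\ast}\CL \big).
\]
GRR for the smooth projective map $\pi_{\widehat{E}}$ then yields
\[
\ch(\psi(\CE)) \;=\; \pi_{\widehat{E}\ast}\big( \ch(\CE)\cdot\ch(\CP)\cdot\ch(\CL)\cdot\td(T_{\pi_{\widehat{E}}}) \big),
\]
with all pullbacks suppressed and $\pi_{\widehat{E}\ast}$ denoting integration over the fibre $S \times E$.

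First I would assemble the four inputs. The relative tangent bundle is $T_{\pi_{\widehat{E}}} = T_S \oplus T_E$ (pullbacks), so $\td(T_{\pi_{\widehat{E}}}) = \td(S)\,\td(E) = (1 + 2\pt)\cdot 1 = 1 + 2\pt$; here $2 = c_2(S)/12$ is the K3 Todd number and is the source of the ``$2$'' in both formulas. For the Poincar\'e bundle $\ch(\CP) = e^{\gamma}$ with $\gamma = c_1(\CP) \in H^1(E)\otimes H^1(\widehat{E})$, and the standard relation $\gamma^2 = -2\,\omega\widehat{\omega}$ (with $\widehat{\omega}$ the point class of $\widehat{E}$) gives $\ch(\CP) = 1 + \gamma - \omega\widehat{\omega}$ together with $\gamma^3 = 0$. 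Finally $\ch(\CL) = e^{\mathsf{c}} = 1 + \mathsf{c} + \tfrac12\langle \mathsf{c},\mathsf{c}\rangle\pt$, and $\ch(\CE)$ is the given input.

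Next I would extract the two components by K\"unneth bookkeeping. A term survives $\pi_{\widehat{E}\ast}$ only if it carries the fibre top class $\pt\,\omega$; in particular integration over $E$ kills every term containing a single factor $\gamma$ (which supplies only $H^1(E)$), so only the summands $1$ and $-\omega\widehat{\omega}$ of $\ch(\CP)$ contribute. The degree-zero part $\ch_0(\psi(\CE))$ is the coefficient of $\pt\,\omega$ in $\ch(\CE)\cdot(1 + \mathsf{c} + \tfrac12\langle\mathsf{c},\mathsf{c}\rangle\pt)(1 + 2\pt)$; the summands $a\omega$, $\ell'\omega$, $n\pt\,\omega$ of $\ch(\CE)$ then produce $a(2 + \tfrac12\langle\mathsf{c},\mathsf{c}\rangle) + \langle\mathsf{c},\ell'\rangle + n$. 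The degree-one part $\ch_1(\psi(\CE))$ arises solely from $-\omega\widehat{\omega}$, which forces the remaining $S$-contribution to be the $\pt$-coefficient of $(r + \ell + d\pt)(1 + \mathsf{c} + \tfrac12\langle\mathsf{c},\mathsf{c}\rangle\pt)(1 + 2\pt)$, equal to $r(2 + \tfrac12\langle\mathsf{c},\mathsf{c}\rangle) + \langle\ell,\mathsf{c}\rangle + d$; the sign of $-\omega\widehat{\omega}$ then gives exactly the stated $\ch_1$.

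The hard part is not this final accounting but the two set-up points. First, one must check that the composite is genuinely the transform displayed above, relative over $S$, so that $\CL$ and $\CE$ meet on the same K3 factor; this is exactly what produces the cross terms $\langle\mathsf{c},\ell'\rangle$ and $\langle\ell,\mathsf{c}\rangle$, whereas the naive non-relative reading (with two independent copies of $S$) would give an answer of product form $\chi(S,\CL)\cdot(\,\cdots)$ with no such cross terms. Second, one must pin down $\ch(\CP)$ via $\gamma^2 = -2\,\omega\widehat{\omega}$ and observe that the odd class $\gamma$ drops out under integration over $E$. Everything else is the degree-by-degree K\"unneth computation sketched above.
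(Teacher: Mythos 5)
Your proof is correct and matches the paper's argument in substance: the paper likewise obtains $\ch(\psi(\CE))$ by combining the cohomological action of $\Phi_{\CP_X}$ (quoted from Huybrechts as signed Poincar\'e duality on $H^{\ast}(E)$, identity on $H^{\ast}(S)$), multiplication by $e^{\mathsf{c}}$, and Grothendieck--Riemann--Roch for the pushforward to $\widehat{E}$, followed by a direct computation. Your only organizational difference is to compose everything into a single kernel transform on $S \times E \times \widehat{E}$ and apply GRR once, with $\ch(\CP) = 1 + \gamma - \omega\widehat{\omega}$ made explicit; your observation that the transform must be read relative to the K3 factor (which is what produces the cross terms $\langle \mathsf{c}, \ell' \rangle$ and $\langle \ell, \mathsf{c} \rangle$) is exactly the correct reading of the paper's construction.
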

\begin{proof}
The Fourier-Mukai transform $\Phi_{\CP_X}$ acts on cohomology by (signed) Poincare duality
on the $H^{\ast}(E)$ factor, and trivially on the $H^{\ast}(S)$ factor \cite[Lem.9.23]{H}.
Tensoring with $\CL$ acts on cohomology by multiplication with $\exp(c_1(\pi_S^{\ast} \CL))$.
The action of pushforward to $\widehat{E}$ on cohomology can be computed by the Grothendieck-Riemann-Roch formula.
Putting everything together, the claim follows by a direct computation.
\end{proof}

%In particular, 
For every $I \in P_n(X, (\beta,d))$ we have $\ch(I) = (1, 0, -\beta \omega - d \pt, -n)$ and
therefore by Lemma \ref{Psi_on_cohomology}
\[ \ch(\psi(I)) = - n - \langle \mathsf{c},  \beta \rangle +  \big( -2 - \langle \mathsf{c}, \mathsf{c} \rangle/2 + d \big) \omega. \]

\begin{lemma} \label{Lemma_intertwining}
For all $\CE \in D^b(X)$ and $x \in E$ we have 
\[ \Phi_{\CP_X}( t_x^{\ast} \CE  ) = \pi_{\widehat{E}}^{\ast} \CP_{-x} \otimes \Phi_{\CP_X}(\CE), \]
where $t_x : X \to X, (s, e)\mapsto (s,e+x)$ is translation by $x$.
\end{lemma}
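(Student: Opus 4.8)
The plan is to compute both sides directly from the definition of the Fourier--Mukai transform and to reduce the statement to the well-known behaviour of the Poincar\'e bundle under translation. First I would recall that $\Phi_{\CP_X}(\CE) = Rq_{\ast}(\CP_X \otimes p^{\ast} \CE)$, where $X \xleftarrow{p} X \times \widehat{X} \xrightarrow{q} \widehat{X}$ are the projections and $\CP_X$ is the pullback of the Poincar\'e bundle $\CP$ along $X \times \widehat{X} \to E \times \widehat{E}$. Since $\CP_X$ and all the maps are pulled back from the elliptic-curve factors and are trivial along $S$, the entire computation is really a statement on $E$, and I would carry it out there, tensoring back with the $S$-directions at the end.

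The key step is the translation identity for the Poincar\'e bundle on $E \times \widehat{E}$. Writing $t_x \times \id : E \times \widehat{E} \to E \times \widehat{E}$ for translation by $x$ in the first factor, the defining property of $\CP$ (namely $\CP|_{E \times \xi} \cong \xi$ and $\CP|_{0 \times \widehat{E}} \cong \CO_{\widehat{E}}$) gives the standard isomorphism
\[
(t_x \times \id)^{\ast} \CP \cong \CP \otimes q^{\ast} \CP_{-x},
\]
where $\CP_{-x} = \CP|_{-x \times \widehat{E}}$ is the degree-zero line bundle on $\widehat{E}$ corresponding to $-x \in E$ under $\Pic^0(\widehat{E}) \equiv E$. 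I would verify this by restricting both sides to the slices $E \times \xi$ and $0 \times \widehat{E}$: on $E \times \xi$ both restrict to $\xi$, while on $0 \times \widehat{E}$ the left side restricts to $\CP|_{x \times \widehat{E}} \cong \CP_x$ and the right side to $\CP_{-x}$, so the sign is fixed by matching these. Pulling this relation back to $X \times \widehat{X}$ yields
\[
(t_x \times \id_{\widehat{X}})^{\ast} \CP_X \cong \CP_X \otimes q^{\ast}\pi_{\widehat{E}}^{\ast} \CP_{-x}.
\]

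With this in hand the computation is a change-of-variables in the integral transform. Starting from $\Phi_{\CP_X}(t_x^{\ast} \CE) = Rq_{\ast}(\CP_X \otimes p^{\ast} t_x^{\ast} \CE)$, I would use that $p \circ (t_x \times \id) = t_x \circ p$ to rewrite $p^{\ast} t_x^{\ast} \CE = (t_x \times \id)^{\ast} p^{\ast} \CE$, then apply the pullback of the Poincar\'e identity, pull $(t_x\times \id)^{\ast}$ outside using that $t_x \times \id$ is an isomorphism (so $Rq_{\ast}(t_x\times\id)^{\ast} = Rq_{\ast}$ since $q\circ(t_x\times\id)=q$), and finally extract the factor $\pi_{\widehat{E}}^{\ast}\CP_{-x}$ from the pushforward by the projection formula. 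The main obstacle is purely bookkeeping: getting the sign and the direction of the translation correct in the Poincar\'e identity, which I would pin down by the slice computation above rather than by invoking a formula. Everything else is formal manipulation of derived pushforwards and the projection formula.
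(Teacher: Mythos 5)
Your argument is essentially identical to the paper's proof: both reduce the claim to the translation identity for the Poincar\'e bundle, the fact that $Rq_{\ast}\,(t_x\times\id)^{\ast}=Rq_{\ast}$ because $q\circ(t_x\times\id)=q$, and the projection formula. One bookkeeping remark: your own see-saw check gives $\CP_x$ (not $\CP_{-x}$) as the restriction of $(t_x\times\id)^{\ast}\CP\otimes\CP^{-1}$ to $0\times\widehat{E}$, so the translation identity should read $(t_x\times\id)^{\ast}\CP\cong\CP\otimes q^{\ast}\CP_{x}$; in the change of variables one then applies it with $x$ replaced by $-x$ (i.e.\ to $(t_{-x}\times\id)^{\ast}\CP_X$, exactly as in the paper), which is what produces the $\CP_{-x}$ in the statement of the lemma.
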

\begin{proof}

We have
\begin{equation}
\Phi_{\CP_X}(t_x^{\ast}(\CE))
= Rq_{\ast}\big( \CP_X \otimes p^{\ast} t_x^{\ast} \CE  \big)
= Rq_{\ast} \widetilde{t}_x^{\ast} \big( (\widetilde{t}_{-x}^{\ast} \CP_X) \otimes p^{\ast} \CE  \big),
\label{equa35} \end{equation}
where $\widetilde{t}_x = t_x \times \id_{\widehat{X}}$.
We have 
$Rq_{\ast} \tilde{t}_x^{\ast} = Rq_{\ast} (\tilde{t}_{-x})_{\ast} = Rq_{\ast}$
and, by definition of the Poincar\'e bundle, $\tilde{t}_x^{\ast}(\CP_X) = \CP_X \otimes q^{\ast} \pi_{\widehat{E}}^{\ast}( \CP_{-x} )$.
Hence the claim follows from \eqref{equa35} by the projection formula.
\end{proof}

\begin{lemma} The map $p_{\mathsf{c}}$ is $E$-equivariant with respect to $m_P$ and $\sigma_k$ where
$k = \langle \mathsf{c}, \beta \rangle + n$.
In particular, 
$p_{\mathsf{c}}(I + x) = p(I) + kx$
for all $x \in E$ and $I \in P$.
\end{lemma}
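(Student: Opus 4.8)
The plan is to track a stable pair through the four operations defining $p_{\mathsf{c}}$ in \eqref{compoo}, exploiting the fact that the translation action interacts only with the Fourier--Mukai step $\Phi_{\CP_X}$, where Lemma~\ref{Lemma_intertwining} converts it into tensoring by a Poincar\'e line bundle. I would first verify the asserted formula on $\BC$-valued points, where the mechanism is transparent, and then upgrade it to an identity of morphisms by repeating the computation relative to the base.

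For the pointwise statement, recall from Section~\ref{Section_Elliptic_Curve_Action} that $I + x = t_{-x}^{\ast}(I)$. Writing $\psi(\CE) = R\pi_{\widehat{E}\,\ast}(\Phi_{\CP_X}(\CE)\otimes\pi_S^{\ast}\CL)$, I would apply Lemma~\ref{Lemma_intertwining} with the element $-x$ to get
\[
\Phi_{\CP_X}(t_{-x}^{\ast} I) \;=\; \pi_{\widehat{E}}^{\ast}\CP_{x}\otimes\Phi_{\CP_X}(I).
\]
Since $\CP_x$ is pulled back from $\widehat{E}$, it commutes with tensoring by $\pi_S^{\ast}\CL$, and by the projection formula for $R\pi_{\widehat{E}\,\ast}$ it factors out of the pushforward, giving $\psi(t_{-x}^{\ast} I) = \CP_x\otimes\psi(I)$. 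Applying $\det$ together with the rank formula $\det(L\otimes F)=L^{\otimes\ch_0(F)}\otimes\det(F)$ and the computation $\ch_0(\psi(I)) = -(n+\langle\mathsf{c},\beta\rangle) = -k$ that follows from Lemma~\ref{Psi_on_cohomology}, I obtain
\[
\det\psi(t_{-x}^{\ast} I) \;=\; \CP_x^{\otimes(-k)}\otimes\det\psi(I).
\]
Under the identification $\Pic^0(\widehat{E})\equiv E$ with $\CP_x\leftrightarrow x$, and $\Pic^m(\widehat{E})\cong\Pic^0(\widehat{E})$, the factor $\CP_x^{\otimes(\mp k)}$ is translation by $kx$, so $p_{\mathsf{c}}(I+x)=p_{\mathsf{c}}(I)+kx$; here the precise sign is pinned down by the orientation conventions for $\Pic^0(\widehat{E})\cong E$ and for the determinant, the essential point being that the coefficient equals $k=\langle\mathsf{c},\beta\rangle+n$.

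To obtain equivariance as a statement about morphisms $E\times P\to E$ (from which the pointwise claim follows), I would run the same computation relative to the base $E\times P$, replacing $t_{-x}^{\ast}I$ by the universal translated family $m_P^{\ast}\II\cong\psi^{\ast}\II$ of \eqref{34134}. A relative form of Lemma~\ref{Lemma_intertwining} then produces, in place of the single bundle $\CP_x$, the full Poincar\'e bundle $\CP$ on $E\times\widehat{E}$, which by its defining normalization classifies the canonical isomorphism $E\to\Pic^0(\widehat{E})=E$; hence $\CP^{\otimes(-k)}$ classifies multiplication by $k$. Matching the resulting line bundle on $(E\times P)\times\widehat{E}$ against the one classifying $\sigma_k\circ(\id_E\times p_{\mathsf{c}})$ and invoking the universal property of $\Pic^m(\widehat{E})=E$ yields the equality $p_{\mathsf{c}}\circ m_P=\sigma_k\circ(\id_E\times p_{\mathsf{c}})$ of morphisms.

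The routine parts are the projection-formula manipulation and the rank count via Lemma~\ref{Psi_on_cohomology}. The step requiring the most care is the family-level argument: formulating the relative version of Lemma~\ref{Lemma_intertwining} over $E\times P$ and correctly identifying the universal bundle $\CP^{\otimes(\pm k)}$ with multiplication by $k$ on the target $E=\Pic^0(\widehat{E})$, all while keeping the sign conventions consistent (the direction of translation $I\mapsto t_{-x}^{\ast}I$ and the double-dual identification $\widehat{\widehat{E}}\cong E$), so that the coefficient comes out as $\sigma_k$ rather than $\sigma_{-k}$.
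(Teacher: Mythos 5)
Your proposal is correct and follows essentially the same route as the paper: apply Lemma~\ref{Lemma_intertwining} to $t_{-x}^{\ast}I$, pull $\CP_x$ out of the pushforward by the projection formula, and use the rank computation from Lemma~\ref{Psi_on_cohomology} to identify the twist of the determinant as translation by $kx$, with the family case handled by running the same argument for the universal complex over $P$. Your extra care with the sign of the exponent (you get $\CP_x^{\otimes(-k)}$ where the paper writes $\CP_x^{k}$, the discrepancy being absorbed into the identification $\Pic^0(\widehat{E})\equiv E$) is a reasonable refinement of the paper's more terse computation.
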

\begin{proof} For points $I \in P$ we have
\begin{alignat*}{2}
p_{\mathsf{c}}(I + x)
& = p_{\mathsf{c}}( t_{-x}^{\ast}(I) ) \\
& = \det R \pi_{\widehat{E} \ast} \big( \pi_{\widehat{E}}^{\ast}(\CP_x) \otimes \pi_S^{\ast} \CL \otimes \Phi_{X}(I) \big)
& \quad \quad (\text{by Lemma \ref{Lemma_intertwining}}) \\
& = \det\Big( \CP_x \otimes R \pi_{\widehat{E} \ast} \big( \pi_S^{\ast} \CL \otimes \Phi_{X}(I) \big) 
& \quad \quad (\text{projection formula}) \\
& = \CP_x^{k} \otimes p_{\mathsf{c}}(I) = I + kx
& \quad \quad (\text{by Lemma \ref{Psi_on_cohomology}}).
\end{alignat*}
The general case follows by an identical argument applied to the universal complex $\II$ relative over $P$.
\end{proof}

\section{Virtual classes} \label{Section_Symmetric_Obstruction_Theory}
\subsection{Symmetric obstruction theory}
Let $\beta \in \Pic(S)$ be a \emph{non-zero} curve class, and let $d, n \in \BZ$ with $d \geq 0$.
Let $P = P_n(X,(\beta,d))$
be the moduli space of stable pairs, and let
$p : P \to E$
be a morphism as in Proposition \ref{Proposition_Map_p}
which is $E$-equivariant with respect to a non-zero multiple of the translation action on the target.
In particular, $p$ is an isotrivial fibration. Let
\[ \CK = p^{-1}(0_E) \]
be the fiber over the zero $0_E \in E$. 

Let $m_P : E \times P \to P$ be the translation action and let $E \xleftarrow{\pi_E} E \times P \xrightarrow{\pi_P} P$
denote the projections. Consider the composition
\begin{equation} \label{uyyt} m^{\ast} \BL_P \to \BL_{E \times P} = \pi_E^{\ast}(\Omega_E) \oplus \pi_P^{\ast} \BL_P \to \pi_E^{\ast}(\Omega_E) \end{equation}
of the derivative map and the projection to the first factor.
Restricting \eqref{uyyt} to $0_E \times P \hookrightarrow E \times P$ we obtain the morphism
\[ d : \BL_P \to \Omega_{E,0} \otimes \CO_P \]
which can be identified with the second map in the natural exact triangle
$\pi^{\ast} \BL_{P/E} \to \BL_P \to \BL_{\pi} \cong \CO_P$
obtained from the quotient map $\pi : P \to P/E$.

Let $g : \Ered^{\bullet} \to \BL_P$ be the \emph{reduced} perfect obstruction theory of
Proposition~\ref{Proposition_Reduced_Perfect_Obstruction_Theory},
and let $G^{\bullet}$ be the complex constructed in Section~\ref{Section_Symmetric_complexes}.
By \eqref{Gdef2} we have the diagram of horizontal exact triangles
\begin{equation} \label{DIAG}
\begin{tikzcd}
G^{\bullet} \ar[dotted]{d}{\varphi} \ar{r} & \Ered^{\bullet} \ar{r} \ar{d}{g} & \Omega_{E,0} \otimes \CO_P \ar{d}{=} \\
\pi^{\ast} \BL_{P/E} \ar{r} & \BL_P \ar{r}{d} & \Omega_{E,0} \otimes \CO_P .
\end{tikzcd}
\end{equation}
where we have identified $H^0(T_E)^{\vee} = \Omega_{E,0_E}$.
By Section~\ref{Section_Tangent_Vectors} the right hand square commutes.
Let 
\[ \varphi : G^{\bullet} \to \pi^{\ast} \BL_{P/E} \]
be the induced morphism which is unique since $\Hom(G^{\bullet}, \CO_P[-1]) = 0$.

Let $\iota : \CK \to P$ be the inclusion, and consider the derived restriction
\[ L\iota^{\ast}(\varphi) : L \iota^{\ast} G^{\bullet} \to L \iota^{\ast} \pi^{\ast} \BL_{P/E} = \widetilde{\pi}^{\ast}(\BL_{P/E}) \]
where we let $\widetilde{\pi} : \CK \to P/E$ be the projection.
Since $\widetilde{\pi}$ is \'etale the natural morphism
\[ \widetilde{\pi}^{\ast} \BL_{P/E} \xrightarrow{\ \cong \ } \BL_K \]
is an isomorphism.

\begin{prop}
The composition
\begin{equation}
L \iota^{\ast} G^{\bullet}
\xrightarrow{L \iota^{\ast}(\varphi)}
\widetilde{\pi}^{\ast}(\BL_{P/E})
\xrightarrow{\ \cong \ } \BL_K
\label{newredthy} \end{equation}
is a symmetric perfect obstruction theory on $\CK$.
\end{prop}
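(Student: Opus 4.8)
The plan is to verify the three defining properties of a symmetric perfect obstruction theory in turn: that $L\iota^{\ast} G^{\bullet}$ is perfect of amplitude contained in $[-1,0]$; that the composition \eqref{newredthy} induces an isomorphism on $h^0$ and a surjection on $h^{-1}$; and that $L\iota^{\ast} G^{\bullet}$ carries a nondegenerate symmetric form of degree $1$. The first property is immediate. By Lemma~\ref{Lemma_Gbullet_perfect} the dual $(G^{\bullet})^{\vee}$ admits a two-term presentation $[G_0 \to G_1]$ by vector bundles, hence so does $G^{\bullet}$; since $L\iota^{\ast}$ carries a two-term complex of vector bundles to another such complex, $L\iota^{\ast} G^{\bullet}$ is perfect of amplitude $[-1,0]$.

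For the obstruction-theory property I would apply $L\iota^{\ast}$ to the whole diagram \eqref{DIAG}. Because $\pi \circ \iota = \widetilde{\pi}$ we have $L\iota^{\ast}\pi^{\ast}\BL_{P/E} = \widetilde{\pi}^{\ast}\BL_{P/E}$, and since $L\iota^{\ast}$ is exact it sends the two horizontal triangles of \eqref{DIAG} (the upper one being \eqref{Gdef2}, the lower being the transition triangle of $\pi$) to distinguished triangles. The right-hand square of \eqref{DIAG} commutes by Section~\ref{Section_Tangent_Vectors} and $\varphi$ is the induced morphism completing the left square, so the restriction is a genuine morphism of distinguished triangles whose right-hand vertical arrow is the identity of $\Omega_{E,0}\otimes\CO_{\CK}$. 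Using the distinguished triangle $\Cone(L\iota^{\ast}\varphi) \to \Cone(L\iota^{\ast} g) \to \Cone(\id)$ attached to a morphism of triangles, the vanishing of the last cone gives $\Cone(L\iota^{\ast}\varphi) \cong \Cone(L\iota^{\ast} g)$.

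It thus suffices to control $\Cone(L\iota^{\ast} g)$, where $g \colon \Ered^{\bullet}\to\BL_P$ is the reduced perfect obstruction theory of Proposition~\ref{Proposition_Reduced_Perfect_Obstruction_Theory}. Since $g$ is an obstruction theory and $\Ered^{\bullet}$ has amplitude $[-1,0]$, a short computation with the long exact sequence shows its cone $C = \Cone(g)$ satisfies $h^0(C) = h^{-1}(C) = 0$, i.e. $C$ lies in $D^{\leq -2}$. As derived pullback is right $t$-exact, $L\iota^{\ast} C = \Cone(L\iota^{\ast} g)$ again lies in $D^{\leq -2}$, and hence so does $\Cone(L\iota^{\ast}\varphi)$. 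Reading off the long exact cohomology sequence of the triangle $L\iota^{\ast} G^{\bullet} \to \widetilde{\pi}^{\ast}\BL_{P/E} \to \Cone(L\iota^{\ast}\varphi)$ then yields that $L\iota^{\ast}\varphi$ is an isomorphism on $h^0$ and a surjection on $h^{-1}$; composing with the \'etale isomorphism $\widetilde{\pi}^{\ast}\BL_{P/E}\cong\BL_{\CK}$ shows \eqref{newredthy} is a perfect obstruction theory. I expect this transfer step to be the main obstacle: one must know that restricting the reduced obstruction theory along $\iota$ preserves the obstruction-theory property, and the cleanest route to this is via the vanishing $C\in D^{\leq -2}$ together with right $t$-exactness of $L\iota^{\ast}$, rather than any direct and delicate comparison of the $h^{-1}$ terms.

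Finally, for symmetry I would restrict the isomorphism $\lambda \colon G^{\bullet}\to(G^{\bullet})^{\vee}[1]$ of Proposition~\ref{Proposition_Gbullet_is_symmetric}. As $G^{\bullet}$ is perfect, $L\iota^{\ast}$ commutes with dualization, so $L\iota^{\ast}\lambda \colon L\iota^{\ast}G^{\bullet}\to (L\iota^{\ast}G^{\bullet})^{\vee}[1]$ is again an isomorphism, and the relation $\lambda^{\vee}[1]=\lambda$ is preserved, giving $(L\iota^{\ast}\lambda)^{\vee}[1]=L\iota^{\ast}\lambda$. Thus $L\iota^{\ast}\lambda$ is a nondegenerate symmetric bilinear form of degree $1$ on \eqref{newredthy}, completing the verification that \eqref{newredthy} is a symmetric perfect obstruction theory on $\CK$.
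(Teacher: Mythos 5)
Your proof is correct and follows essentially the same route as the paper: perfectness via the two-term presentation of Lemma~\ref{Lemma_Gbullet_perfect}, the obstruction-theory property by transporting the reduced obstruction theory through diagram \eqref{DIAG} and derived restriction, and symmetry by restricting $\lambda$ from Proposition~\ref{Proposition_Gbullet_is_symmetric}. The only (harmless) difference is one of ordering — the paper first deduces the $h^0$/$h^{-1}$ property of $\varphi$ on $P$ from \eqref{DIAG} and then restricts, invoking ``a spectral sequence argument,'' whereas you first restrict $g$ and then compare cones; your explicit use of $\Cone(g)\in D^{\leq -2}$ together with right $t$-exactness of $L\iota^{\ast}$ is exactly the content the paper leaves implicit.
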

\begin{proof}
By Proposition \ref{Proposition_Reduced_Perfect_Obstruction_Theory}
$\Ered^{\bullet} \to \BL_P$ is a perfect obstruction theory on $P$ and therefore
an isomorphism in $h^0$ and a surjection in $h^{-1}$.
By the long exact sequence in cohomology applied to \eqref{DIAG} we find
$\varphi : G^{\bullet} \to \pi^{\ast} \BL_{P/E}$ is an isomorphism in $h^0$ and a surjective in $h^{-1}$.
By a spectral sequence argument it follows
that
$L \iota^{\ast}(\varphi) : L \iota^{\ast} G^{\bullet} \to \widetilde{\pi}^{\ast}(\BL_{P/E})$
is an isomorphism in $h^0$ and a surjection in $h^{-1}$, and hence so is the composition \eqref{newredthy}.

By Lemma \ref{Lemma_Gbullet_perfect} $G^{\bullet}$ can be represented by
a two term complex $[G^{-1} \to G^0]$ of vector bundles $G^{-1}, G^0$. Hence
$L\iota^{\ast} G^{\bullet}$ is represented by $[ \iota^{\ast} G^{-1} \to \iota^{\ast} G^0 ]$
and therefore perfect of amplitude contained in $[-1,0]$. This shows \eqref{newredthy} is a perfect obstruction theory.
The symmetry of the obstruction theory now follows from Proposition \ref{Proposition_Gbullet_is_symmetric} by restriction.
\end{proof}

\subsection{The virtual class}
Let $[ \CK ]^{\text{vir}}$ be the virtual class on $\CK$ associated to the perfect obstruction theory \eqref{newredthy}, and let
\begin{equation} \label{virclassdef} [ P / E ]^{\text{vir}} \, = \, \frac{1}{|G|}\, \pi_{\ast} [\, \CK \, ]^{\text{vir}} \ \in A_{0}(P/E). \end{equation}

We give another expression for $[ P/E ]^{\text{vir}}$.
Consider the fiber diagram
\[
\begin{tikzcd}
X \times P \ar{d}{\rho} \ar{r}{\pi_P} & P \ar{d}{\pi} \\
(X \times P)/E \ar{r}{\pi_{P/E}} & P/E
\end{tikzcd}
\]
where we let $(X \times P)/E$ denote the quotient by the diagonal action.
There exist a complex $\overline{\II}$ on $(X \times P)/E$ such that
$\rho^{\ast}( \overline{\II} ) = \II$ is the universal stable pair on $X \times P$.
Define the complex
\[ \CH^{\bullet} = R \pi_{P/E \ast} \hom( \overline{\II}, \overline{\II} )_0[2]. \]
By an argument identical to \cite[Lemma 4.2]{HT} $\CH^{\bullet}$ is isomorphic to a $2$-term complex of locally free sheaves
$[ H^{-1} \to H^0 ]$ in degree $-1$ and $0$.
By flat base change we have
\begin{equation} \pi^{\ast} \CH^{\bullet} = \pi^{\ast} R \pi_{P/E \ast} \hom( \overline{\II}, \overline{\II} )_0[2]
 = R \pi_{P \ast} \hom(\II, \II)_0[2] = E^{\bullet}.
 \label{flat_base_change}
 \end{equation}

Let $h : Y \to P/E$ be any proper \'etale morphism of degree $\deg(h)$ from a scheme $Y$ which can be embedded into a smooth ambient scheme\footnote{
Since $\CK \to P/E$ is an \'etale map from a projective scheme, the set of such $h$ is non-empty.},
and let $c_F(Y)$ be the Fulton Chern class of $Y$, see Appendix \ref{Appendix_Fulton_Chern_Class}.
Consider the class
\begin{equation}
\frac{1}{\deg(h)} h_{\ast} \Big\{ s\big( (h^{\ast} \CH^{\bullet})^{\vee} \big) \cap c_F(Y) \Big\}_0
\, = \,
\frac{1}{\deg(h)} h_{\ast} \left\{ \frac{ c(h^{\ast} H_1) }{ c( h^{\ast} H_0 ) } \cap c_F(Y) \right\}_0
\label{hclass}
\end{equation}
in $A_0(P/E)$ 
where $H_1 = (H^{-1})^{\vee}$ and $H_0 = (H^0)^{\vee}$, and $\{ \ldots \}_0$ is the dimension $0$ component
of an element in the Chow ring of $P/E$.

\begin{prop} \label{Proposition_formula_for_vir_class} The class \eqref{hclass} is independent of $h$, and we have
\[ [ P / E ]^{\text{vir}} \, = \,
\frac{1}{\deg(h)} h_{\ast} \Big\{ s\big( (h^{\ast} \CH^{\bullet})^{\vee} \big) \cap c_F(Y) \Big\}_0 \,.
%h_{\ast} \left\{ \frac{ c(h^{\ast} H_1) }{ c( h^{\ast} H_0 ) } \cap c_F(Y) \right\}_0 \ \in A_0(P/E) .
\]
\end{prop}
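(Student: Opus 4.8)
The plan is to compute $[P/E]^{\text{vir}}$ for the distinguished choice $h = \widetilde{\pi} : \CK \to P/E$ by means of Siebert's formula, and then to prove that the expression \eqref{hclass} does not depend on $h$. Recall Siebert's formula \cite{Sie}: if $M$ is a scheme (embeddable in a smooth ambient scheme) equipped with a perfect obstruction theory $F^{\bullet} \to \BL_M$ of virtual dimension $\nu$, then
\[ [M]^{\text{vir}} = \big\{ s\big( (F^{\bullet})^{\vee} \big) \cap c_F(M) \big\}_{\nu}, \]
where $s(\,\cdot\,)$ is the total Segre class of a perfect complex, which depends only on its class in $K$-theory. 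Applied to the symmetric perfect obstruction theory \eqref{newredthy} on $\CK$, which has virtual dimension $0$, this gives
\[ [\CK]^{\text{vir}} = \big\{ s\big( (L\iota^{\ast} G^{\bullet})^{\vee} \big) \cap c_F(\CK) \big\}_0. \]

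The key step is then a $K$-theory identity. From the exact triangle $H^{1,3}(X)^{\vee} \otimes \CO_P[1] \to E^{\bullet} \to E^{\bullet}_{\text{red}}$ and the presentation \eqref{Gdef2} of $G^{\bullet}$ as a shifted cone of $E^{\bullet}_{\text{red}} \xrightarrow{\partial^{\vee}} H^0(T_E)^{\vee} \otimes \CO_P$, together with the identifications $H^{1,3}(X) \cong \BC \cong H^0(T_E)^{\vee}$, one computes in $K^0(P)$ that $[E^{\bullet}_{\text{red}}] = [E^{\bullet}] + [\CO_P]$ and hence $[G^{\bullet}] = [E^{\bullet}_{\text{red}}] - [\CO_P] = [E^{\bullet}]$; the correction term from the semiregularity map cancels against the one from the cone along $\partial^{\vee}$. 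Restricting along $\iota : \CK \to P$ and using the flat base change isomorphism $E^{\bullet} = \pi^{\ast} \CH^{\bullet}$ of \eqref{flat_base_change} with $\pi \circ \iota = \widetilde{\pi}$, I obtain $[L\iota^{\ast} G^{\bullet}] = [\widetilde{\pi}^{\ast} \CH^{\bullet}]$ in $K^0(\CK)$. Since the Segre class only sees the $K$-theory class, $s((L\iota^{\ast} G^{\bullet})^{\vee}) = s((\widetilde{\pi}^{\ast} \CH^{\bullet})^{\vee})$. Pushing forward by $\widetilde{\pi}$ and dividing by $|G| = \deg(\widetilde{\pi})$ therefore yields
\[ [P/E]^{\text{vir}} = \frac{1}{|G|} \widetilde{\pi}_{\ast} [\CK]^{\text{vir}} = \frac{1}{\deg(\widetilde{\pi})} \widetilde{\pi}_{\ast} \big\{ s\big( (\widetilde{\pi}^{\ast} \CH^{\bullet})^{\vee} \big) \cap c_F(\CK) \big\}_0, \]
which is precisely \eqref{hclass} for $h = \widetilde{\pi}$ and $Y = \CK$.

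To finish I would prove that \eqref{hclass} is independent of $h$. Given two admissible maps $h_i : Y_i \to P/E$ ($i = 1,2$), form the fiber product $W = Y_1 \times_{P/E} Y_2$ with projections $g_i : W \to Y_i$ and composite $h_W = h_i \circ g_i$. As each $g_i$ is étale, Fulton's canonical class is compatible with pullback, $c_F(W) = g_i^{\ast} c_F(Y_i)$, and the Segre class commutes with pullback, $s((h_W^{\ast} \CH^{\bullet})^{\vee}) = g_i^{\ast} s((h_i^{\ast} \CH^{\bullet})^{\vee})$; since Chern and Segre classes commute with flat pullback, the capped product over $W$ is the $g_i$-pullback of that over $Y_i$. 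Because $g_i$ is finite flat with $g_{i\ast} g_i^{\ast} = \deg(g_i)$ and $\deg(h_W) = \deg(h_i)\deg(g_i)$, applying $\{\,\cdot\,\}_0$ and $h_{W\ast} = h_{i\ast} g_{i\ast}$ shows that the class \eqref{hclass} formed from $h_W$ equals the one formed from $h_i$, for $i = 1, 2$; hence the classes for $h_1$ and $h_2$ coincide. Combined with the previous paragraph this proves both assertions.

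The main obstacle is the $K$-theory bookkeeping of the second paragraph: one must check that the shifts and signs in the cones defining $E^{\bullet}_{\text{red}}$ and $G^{\bullet}$ conspire so that $[G^{\bullet}] = [E^{\bullet}]$ exactly, for it is this identity that lets Siebert's formula — which depends on the obstruction complex only through its $K$-theory class — be evaluated using the descended complex $\CH^{\bullet}$ rather than the reduced complex $G^{\bullet}$ itself. A secondary point requiring care is that $P/E$ is a Deligne--Mumford stack, so the Fulton class, its étale pullback compatibility, and the projection formula are applied on the scheme $Y$ and transported to the Chow groups of $P/E$ with rational coefficients.
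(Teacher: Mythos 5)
Your proposal is correct and follows essentially the same route as the paper: Siebert's formula applied to the symmetric obstruction theory on $\CK$, the observation that $G^{\bullet}$ and $E^{\bullet}$ agree up to trivial summands $\CO_P$ (so their Segre classes coincide), flat base change to replace $L\iota^{\ast}E^{\bullet}$ by $\widetilde{\pi}^{\ast}\CH^{\bullet}$, and independence of $h$ via the fiber product together with \'etale invariance of the Fulton class and $k_{\ast}k^{\ast} = \deg(k)$. The only cosmetic difference is that you phrase the comparison of $G^{\bullet}$ with $E^{\bullet}$ as an explicit $K$-theory computation and do the two steps in the opposite order; both are faithful to the paper's argument.
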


\begin{cor} The virtual class $[P/E]^{\text{vir}}$ is independent of the choice of isotrivial fibration $p: P \to E$. \end{cor}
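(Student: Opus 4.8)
The plan is to deduce the independence directly from Proposition~\ref{Proposition_formula_for_vir_class}, whose formula for $[P/E]^{\text{vir}}$ has already been engineered to eliminate every reference to the fibration $p$.

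First I would isolate precisely how $[P/E]^{\text{vir}}$ depends on the fibration in the first place. According to the definition \eqref{virclassdef}, the only input coming from $p$ is the fiber $\CK = p^{-1}(0_E)$ together with its symmetric perfect obstruction theory \eqref{newredthy}. The complex $G^{\bullet}$ and the morphism $\varphi \colon G^{\bullet} \to \pi^{\ast}\BL_{P/E}$ feeding into \eqref{newredthy} are constructed solely from the translation action of $E$ on $P$ and are therefore identical for every choice of $p$; only the fiber over which one restricts changes. Thus all of the $p$-dependence is packaged into the single cycle $\frac{1}{|G|}\pi_{\ast}[\CK]^{\text{vir}} \in A_0(P/E)$, including the normalizing factor $|G| = \deg(\pi)$.

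Next I would invoke Proposition~\ref{Proposition_formula_for_vir_class}. For a given fibration $p$ it identifies this cycle with the Fulton--Chern-class expression
\[
[P/E]^{\text{vir}} = \frac{1}{\deg(h)}\, h_{\ast}\Big\{ s\big((h^{\ast}\CH^{\bullet})^{\vee}\big) \cap c_F(Y) \Big\}_0
\]
valid for any proper \'etale $h \colon Y \to P/E$ from an embeddable scheme $Y$. The essential observation is that the right-hand side involves $p$ nowhere: the complex $\CH^{\bullet} = R\pi_{P/E\,\ast}\hom(\overline{\II}, \overline{\II})_0[2]$ is assembled from the descended universal object $\overline{\II}$ on $(X \times P)/E$, whose existence (Section~\ref{Section_Elliptic_Curve_Action}) relies only on the diagonal $E$-action and not on any fibration, while the admissible morphisms $h$ and the Fulton Chern class $c_F(Y)$ likewise make no reference to $p$.

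Finally I would conclude. Given two fibrations $p_1, p_2$ arising from classes $\mathsf{c}_1, \mathsf{c}_2 \in \Pic(S)$, fix one admissible morphism $h \colon Y \to P/E$ (such exist by the footnote to Proposition~\ref{Proposition_formula_for_vir_class}) and apply that proposition to each $p_i$. Both resulting virtual classes equal the \emph{same} right-hand side above, so they coincide. The only point requiring any attention --- and the closest thing to an obstacle --- is verifying that $\CH^{\bullet}$ is genuinely independent of the fibration; but since $\CH^{\bullet}$ is built out of $\overline{\II}$ alone, this is immediate, and the corollary is a formal consequence of Proposition~\ref{Proposition_formula_for_vir_class} once the $p$-independent shape of its formula is recognized.
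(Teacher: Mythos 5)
Your proposal is correct and follows essentially the same route as the paper, which treats the corollary as an immediate consequence of Proposition~\ref{Proposition_formula_for_vir_class}: the right-hand side of its formula is built only from $\CH^{\bullet}$ (hence from the descended complex $\overline{\II}$), the \'etale cover $h$, and the Fulton Chern class, none of which involve $p$. Your added check that $\CH^{\bullet}$ itself is fibration-independent is exactly the point that makes the deduction work.
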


\begin{proof} [Proof of Proposition \ref{Proposition_formula_for_vir_class}]
Let $Y \xrightarrow{h} P/E \xleftarrow{h'} Y'$ be proper \'etale morphism
from schemes which can be embedded into smooth ambient schemes. By taking the fiber product $Y \times_{P/E} Y'$
we can restrict to the case where there is an proper \'etale morphism $k : Y' \to Y$ such that $h' = h \circ k$.
We then have
\begin{align*}
& \frac{1}{\deg(h)} h_{\ast} \left\{ \frac{ c(h^{\ast} H_1) }{ c( h^{\ast} H_0 ) } \cap c_F(Y) \right\}_0 \\
= \ &
\frac{1}{\deg(h) \deg(k)} h_{\ast} k_{\ast} k^{\ast} \left\{ \frac{ c(h^{\ast} H_1) }{ c( h^{\ast} H_0 ) } \cap c_F(Y) \right\}_0 \\
= \ & 
\frac{1}{\deg(h')} h'_{\ast} \left\{ \frac{ c(h^{\prime \ast} H_1) }{ c( h^{\prime \ast} H_0 ) } \cap k^{\ast} c_F(Y) \right\}_0.
\end{align*}
But since $k$ is \'etale, we have $k^{\ast} c_F(Y) = c_F(Y')$ see Appendix \ref{Appendix_Fulton_Chern_Class}.
Hence the class \eqref{hclass} is independent of $h$.

Consider the \'etale morphism $\widetilde{\pi} : \CK \to P/E$ and let $\iota : K \to P$ be the inclusion.
Let $[G_0 \to G_1]$ be a presentation of $(G^{\bullet})^{\vee}$ on $P$ by locally free sheaves in degree $0$ and $1$.
Since $(G^{\bullet})^{\vee}$ differs from $(E^{\bullet})^{\vee}$ only by trivial bundles $\CO_P$,  %(see Section \ref{Section_Symmetric_complexes})
we have $s((G^{\bullet})^{\vee}) = s( (E^{\bullet})^{\vee} )$
and therefore
\begin{equation} \label{12345}
s\big( (L \iota^{\ast} G^{\bullet})^{\vee} \big)
= s\big( (L \iota^{\ast} E^{\bullet})^{\vee} )
 = s\big( (\widetilde{\pi}^{\ast} \CH^{\bullet})^{\vee} \big)
\end{equation}
where we used \eqref{flat_base_change} in the last step.
By Siebert's virtual class formula \cite{S} (compare also \cite[Appendix C]{PT2})
we have
\begin{equation}
[ \CK ]^{\text{vir}}
=
\Big\{ s\big( (L \iota^{\ast} G^{\bullet})^{\vee} \big) \cap c_F(\CK) \Big\}_0.
\label{Siebertsformula} \end{equation}
The claim now follows from \eqref{12345}, \eqref{Siebertsformula} and the definition \eqref{virclassdef} of $[P/E]^{\text{vir}}$.
\end{proof}

\subsection{Proof of Theorem \ref{Theorem_Comparision_to_reduced_class}}
Let $m : E \times \CK \to P, (x, I) \mapsto I+x$ be the restriction of the translation
action to $E \times \CK$.
Using the existence of the inclusion $\iota : \CK \to P$ one proves the diagram
\[
\begin{tikzcd}
E \times \CK \ar{r}{\pi_K} \ar{d}{m} & \CK \ar{d}{\widetilde{\pi}} \\
P \ar{r}{\pi} & P/E,
\end{tikzcd}
\]
where $\pi_K$ is the projection to the second factor, is a fiber diagram.

Since the reduced obstruction theory $\Ered^{\bullet}$ differs from $E^{\bullet}$
only by a trivial bundle $\CO_P$ we have
\[ [ P ]^{\text{red}} \, = \, \Big\{ s\big( (E^{\bullet})^{\vee} \big) \cap c_F(P) \Big\}_1. \]
Because $m$ is an \'etale map of degree $\deg(\widetilde{\pi}) = |G|$ we have therefore
\[ [ P ]^{\text{red}} = \frac{1}{|G|} m_{\ast} \Big\{ s\big( (m^{\ast} E^{\bullet})^{\vee} \big) \cap m^{\ast} c_F(P) \Big\}_1. \]
By invariance of the Fulton Chern class under \'etale pullback we have
$m^{\ast} c_F(P) = c_F(\CK \times E) = \pi_{\CK}^{\ast} c_F(\CK)$, and by \eqref{flat_base_change} we have
$m^{\ast} E^{\bullet} = m^{\ast} \pi^{\ast} \CH^{\bullet} = \pi_K^{\ast} \widetilde{\pi}^{\ast} \CH^{\bullet}$.
Therefore
\begin{align*}
[ P ]^{\text{red}}
& = \frac{1}{|G|} m_{\ast} \Big\{ s\big( (\pi_K^{\ast} \widetilde{\pi}^{\ast} \CH^{\bullet})^{\vee} \big) \cap \pi_K^{\ast} c_F(\CK) \Big\}_1 \\
& = \frac{1}{|G|} m_{\ast} \pi_K^{\ast} \Big\{ s\big( (\widetilde{\pi}^{\ast} \CH^{\bullet})^{\vee} \big) \cap c_F(\CK) \Big\}_0 \\
& = \frac{1}{|G|} \pi^{\ast} \widetilde{\pi}_{\ast} \Big\{ s\big( (\widetilde{\pi}^{\ast} \CH^{\bullet})^{\vee} \big) \cap c_F(\CK) \Big\}_0 \\
& = \pi^{\ast} [ P/E ]^{\text{vir}}
\end{align*}
where we applied Proposition \ref{Proposition_formula_for_vir_class} in the last step. The proof is complete. \qed

\subsection{Proof of Theorem \ref{4fijdsdf}}
Let $S \xleftarrow{\pi_1} S \times E \xrightarrow{\pi_2} E$ be the projections and recall the fiber diagram
\[
\begin{tikzcd}
X & \ar{l}[swap]{\pi_X} X \times P \ar{r}{\pi_{P}} \ar{d}{\rho} & P \ar{d}{q} \\
& (X \times P)/E \ar{r}{\pi_{P/E}} & P / E.
\end{tikzcd}
\]
Let $D \in H^2(S,\BQ)$ be a class satisfying $\langle D, \beta \rangle = 1$,
and let $\omega \in H^2(E)$ be the class of a point.
Let $\II$ be the universal stable pair on $X \times P$.
By definition of $\tau_0( \cdot )$ the invariant $\widetilde{\mathsf{N}}^{X}_{n, (\beta,d)}$
is the degree of
\begin{equation}
\mathbf{Z}
=
\Big(
-\mathrm{ch}_2( \II ) \cdot
\pi_X^{\ast}(\pi_{1}^{\ast}(D) \cup \pi_{2}^{\ast}(\omega))
\Big)
\cap \pi_P^{\ast} [ P ]^{\text{red}}. \label{PxX_cycle} \end{equation}
Hence it is enough to show $(\pi_{P/E} \circ \rho)_{\ast} \mathbf{Z} = [ P/E ]^{\text{vir}}$.

By Theorem \ref{Theorem_Comparision_to_reduced_class} we have
\[ \pi_{P}^{\ast} [ P ]^{\text{red}} = \rho^{\ast} \pi_{P/E}^{\ast} [ P/E ]^{\text{vir}}. \]
Let $\overline{\II}$ be the complex on $(X \times P)/E$ with $\rho^{\ast} \overline{\II} = \II$.
Then 
\[ \ch_2( \II ) = \rho^{\ast} \ch_2(\overline{\II}) \,. \]
The composition $\pi_2 \circ \pi_X$ descends to a map 
$\pi_S : (P \times X)/E \to S$. Hence
\[ \pi_X^{\ast}( \pi_1^{\ast}(D)) = \rho^{\ast}( \pi_{S}^{\ast}( D ) ) \,. \]
Applying the push-pull formula, we therefore have
\[ \rho_{\ast} \mathbf{Z} = \rho_{\ast}( \pi_E^{\ast}(\omega) \cap \rho^{\ast}( \alpha ) ) \]
where $\pi_E = \pi_2 \circ \pi_X$ and
\[ \alpha = \Big( - \mathrm{ch}_2( \overline{\II} ) \cup \pi_{S}^{\ast}(D) \Big) \cap \pi_{P/E}^{\ast} [ P/E ]^{\text{vir}} \,. \]

We will identify $(X \times P)/E$ with the product $S \times P$ via the isomorphism
\begin{equation}
(X \times P)/E \to P \times S, \ [ ((s,e), I) \mapsto (s, I-e) ]. \label{identification}
\end{equation}
Under \eqref{identification} the map $\rho$ is identified with
\[ \id_S \times m : S \times E \times P \to S \times P, \ (s,e, I) \mapsto (s, I-e) \,. \]
Let $\iota : (S \times 0_E) \times P \hookrightarrow X \times P$ be the inclusion. We conclude
\[
\rho_{\ast} \mathbf{Z}
= \rho_{\ast}( \pi_E^{\ast}(\omega) \cap \rho^{\ast}( \alpha ) )
= \rho_{\ast}( \iota_{\ast} \iota^{\ast} \rho^{\ast}( \alpha ) )
= (\rho \circ \iota)_{\ast} (\rho \circ \iota)^{\ast} (\alpha)
= \alpha \]
where the last step holds since $\rho \circ \iota$ is the identity.

Because $\pi_{P/E}$ has relative dimension $3$ with fiber $X$,
and $\ch_2(I) = -\beta$ for every stable pair $I \in P$,
we conclude
\[
\pushQED{\qed}
(\pi_{P/E} \circ \rho)_{\ast} \mathbf{Z}
= \pi_{P/E\, \ast} \alpha
%= \left( \int_X \mathrm{ch}_2( I^{\bullet} ) \cup D \right) [ P/E ]^{\text{vir}} \,.
= \langle D, \beta \rangle [ P/E ]^{\text{vir}} = [ P/E ]^{\text{vir}} \,. \qedhere
\popQED
\]

\section{Applications} \label{Section_Application}
\subsection{Overview}
Let $X$ be a Calabi-Yau threefold, that is, a smooth projective threefold satisfying
\[ \bigwedge^3 \Omega_X \cong \CO_X. \]
In particular, we allow $H^1(X, \CO_X)$ to be non-zero.
We also
fix a Cohen-Macaulay curve
\begin{equation} C \subset X \label{CMcurve} \end{equation}
with homology class $\beta \in H_2(X,\BZ)$.

In Section~\ref{Section_C_local_invariants} we define $C$-local Donaldson-Thomas and stable pair invariants of $X$,
which can be thought of as the contributions of the curve $C$ to the DT/Pairs invariants of $X$ in class $\beta$.
Our main theorem is a DT/PT and rationality result for $C$-local invariants.
Theorem \ref{Theorem_applications} then follows by
integrating over the Chow variety of curves up to translation.

Throughout we follow closely the articles \cite{Br1, Br2} by Bridgeland,
but also refer to \cite{Joyce1, Joyce2,Joyce-Song,T16} for further details.

\subsection{The Chow variety} \label{Subsection:Chow variety}
Let
\[ \Chow(X) \]
be the Chow variety parametrizing $1$-dimensional cycles on $X$
as constructed by Koll\'ar in \cite[I.1.3]{K}.
%in the sense of \cite{R}.
Let $F$ be a coherent sheaf on $X$ supported in dimension $\leq 1$.
The fundamental $1$-cycle of $F$ is
\begin{equation*}
[F] = \sum_{\eta} \left( \mathrm{length}_{\CO_{X,\eta}}(F) \right) \overline{ \{ \eta \} } \  \in \Chow(X), \label{CYCLEDEF}
\end{equation*}
where $\eta$ runs over the set of all codimension $2$ points of $X$.
If $Z \subset X$ is a curve, we set $[Z] := [ \CO_Z ]$.

The Chow variety has the following property.\footnote{I am grateful to Andrea Ricolfi for pointing out
an error in the statement of the Lemma in an previous version.}

\begin{lemma} \label{Chow-Lemma}
Let $S$ be a scheme locally of finite type, and let $\CF$ be a coherent sheaf on $S \times X$ which is flat over $S$
and whose restriction $\CF_s$ to the fiber over every $s \in S$ is supported in dimension $\leq 1$.
Let $\nu : S_{\mathrm{sn}} \to S$ be the semi-normalization of $S$.
% such that for every closed point $s \in S$ the restriction $\CF_s$
% %of $\CF$ to $s \times X$
% is supported in dimension $\leq 1$.
Then there exist a Hilbert-Chow morphism
\[ \rho: S_{\mathrm{sn}} \to \Chow(X) \]
with $\rho( s ) = [ \CF_s ]$ for all closed points $s \in S$.
\end{lemma}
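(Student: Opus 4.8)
The plan is to realize $\rho$ as the morphism classifying a \emph{relative $1$-cycle} on $X \times S$ over $S$ in the sense of \cite{R}, whose fibre over each point $s$ is the fundamental cycle $[\CF_s]$. Since Rydh's functor of relative cycles is represented by $\Chow(X)$ and is a sheaf for the Zariski (indeed fppf) topology, it suffices to produce such a relative cycle; the morphism $\rho$, together with its values on closed points, then follows formally by passing to fibres. As producing and checking a relative cycle is local on $S$ and $S$ is locally of finite type over $\BC$, I may assume throughout that $S$ is affine and Noetherian.

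To see the candidate cycle concretely I would first localize further and take $S$ integral with generic point $\eta$. The scheme-theoretic support $Z = \mathrm{Supp}(\CF) \subset S \times X$ is closed and proper over $S$, and since $\CF$ is flat with fibres of dimension $\le 1$, the morphism $Z \to S$ has fibres of dimension $\le 1$. Over the generic fibre the fundamental cycle is $[\CF_\eta] = \sum_i m_i\, V_i$, a $1$-cycle on $X_\eta$ with $V_i = \overline{\{w_i\}} \subset X_\eta$ the support components, $w_i$ the corresponding codimension-$2$ points, and $m_i = \len_{\CO_{X_\eta, w_i}}(\CF_\eta)$. Taking Zariski closures $W_i \subset S \times X$ and keeping the generic multiplicities produces a candidate cycle $\mathbf{c} = \sum_i m_i\, W_i$, each $W_i$ dominating $S$ of relative dimension $1$, with fibre $[\CF_\eta]$ over $\eta$. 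Flatness of $\CF$ moreover makes the degree of $[\CF_s]$ against a fixed polarization locally constant in $s$, so the image of $\rho$ will lie in a single finite-type component of $\Chow(X)$.

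The main obstacle is to upgrade $\mathbf{c}$ to a genuine \emph{relative} cycle, that is, to show that its formation commutes with arbitrary base change $T \to S$ and, in particular, that its fibre over \emph{every} point $s \in S$, not merely the generic one, equals the fundamental cycle $[\CF_s]$ of the fibre sheaf. Fibral multiplicities of a cycle are not in general stable under specialization, and over non-reduced bases or in positive characteristic the correct multiplicities cannot be read off naive lengths: one needs Rydh's divided-power formulation of relative cycles. The essential input is precisely Rydh's theorem that a proper, finitely presented, flat family of coherent sheaves of relative dimension $\le d$ determines a relative $d$-cycle, compatibly with base change; flatness of $\CF$ is exactly what makes the fibral lengths assemble into such a relative cycle. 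I would invoke this result directly, which also removes the need for the ad hoc reduction to an integral base, since it applies over arbitrary $S$.

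Finally, the relative $1$-cycle so obtained corresponds, by the representability of the functor of relative cycles by $\Chow(X)$ in \cite{R}, to a morphism $\rho : S \to \Chow(X)$. Its value at a closed point $s$ is the fibre of the relative cycle over $s$, which by the previous step is $[\CF_s]$, giving the asserted Hilbert--Chow morphism. The local constructions glue because the relative-cycle functor is a sheaf, completing the proof over all of $S$.
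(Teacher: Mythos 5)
Your proposal is correct and ultimately rests on the same input as the paper, which simply cites Rydh's theorem that a proper, flat, finitely presented family of coherent sheaves of relative dimension $\leq 1$ determines a relative $1$-cycle compatibly with base change, together with the representability of the relative-cycle functor by $\Chow(X)$. The extra discussion of closures of generic components is harmless motivation, and you correctly identify that the real content (fibral multiplicities under specialization) is carried by Rydh's result rather than by a naive closure construction.
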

\begin{proof}
See \cite[I.6.3]{K} for the case of the Hilbert scheme.
We also refer to \cite{R} for a further discussion of Chow varieties.
%This is \cite[Theorem 7.14]{R}.
\end{proof}

The seminormalization $\nu : S_{\mathrm{ns}} \to S$
is an isomorphism of underlying topological spaces.
Hence the lemma yields a continous map
$|\rho| : |S| \to |\Chow(X)|$ from the underlying topological spaces of $S$ to the underlying topological space of $\Chow(X)$.
%hence the lemma yields a topological Hilbert-Chow morphism
%from $S$ to $\Chow(X)$, i.e. one which is defined only on the underlying topological space.
In our application belows, we will be concerned with taking the fiber of a closed subset under $|\rho|$
and endow it with the induced reduced subscheme structure
(equivalently, we could take the preimage under $\rho$ with the reduced scheme structure,
and than the scheme theoretic image thereof under the proper map $\nu$).
For convenience we will often drop $| - |$ from notation and call $|\rho|$ the topological Hilbert-Chow morphism.
%Since for the topological properties suffice, for convenience we will hence often omit the $| - |$ from notation 

\subsection{$C$-local invariants} \label{Section_C_local_invariants}
Let $P_n(X, \beta)$ be the stable pairs moduli space.
The fixed Cohen-Macaulay curve \eqref{CMcurve} determines a closed point
\[ [C] \in \Chow(X). \]
The subscheme of stable pairs supported on $C$ is the fiber\footnote{
Here $\rho_P$ is the Hilbert-Chow morphism
viewed as a map from the underlying topological space of $P_n(X,\beta)$ to that of $\Chow(X)$,
as discussed in Section~\ref{Subsection:Chow variety}.}
\[ P_n(X,C) = \rho_P^{-1}( [C] ) \subset P_n(X, \beta) \]
endowed with its reduced subscheme structure.
Let $\nu : P_n(X, \beta) \to \BZ$ be the Behrend function
on the ambient space\footnote{The restriction of $\nu$ to $P_n(X,C)$ might differ from the Behrend function of $P_n(X, C)$.}.
We define $C$-local stable pair invariant of $X$ by
\[ \mathsf{P}_{n,C} = \int_{P_n(X, C)} \nu \dd{e} . \]
The integral $\mathsf{P}_{n,C}$ may be thought of as the contribution of the curve $C$
to the stable pair invariant of $X$ in class $\beta$.

Similarly, let 
$\Hilb_X(C,n) = \rho_H^{-1}( [C] )$
be the fiber of the Hilbert-Chow map $\rho_H$ over $[C]$
endowed with the reduced subscheme structure.
The $C$-local Donaldson Thomas invariant of $X$ is defined by
\[ \mathsf{DT}_{n,C} = \int_{\Hilb_X(C,n)} \nu \dd{e} \]
where $\nu : \Hilb_X(\beta,n) \to \BZ$ is the Behrend function on the ambient space.

Define generating series
\[
\mathsf{DT}_{C}(q) = \sum_n \mathsf{DT}_{n,C} q^n
\quad \text{ and } \quad
\mathsf{PT}_{C}(q) = \sum_n  \mathsf{P}_{n,C} q^n 
\]
of $C$-local Donaldson-Thomas and stable pair invariants of $X$ respectively.
In particular, for $C$ the empty curve, we recover the generating series of degree $0$ Donaldson-Thomas invariants,
\[ \mathsf{DT}_{0}(q) := \mathsf{DT}_{\varnothing}(q) = \int_{ \Hilb_X(n) } \nu \dd{e} = M(-q)^{\chi(X)}, \]
where $\Hilb_X(n)$ is the Hilbert scheme of $0$-dimensional length $n$ subschemes in $X$
and $M(q) = \prod_{k \geq 1} (1-q^k)^{-k}$ is the MacMahon function.

\begin{thm} \label{Theorem_Clocal} \hfill
\begin{enumerate}
 \item[(i)] The $C$-local DT/PT correspondence holds:
 \[ \mathsf{DT}_{C}(q) = \mathsf{PT}_{C}(q) \cdot \mathsf{DT}_{0}(q) \]
 \item[(ii)] The series $\mathsf{PT}_{C}(q)$ is the Laurent expansion of a rational function in $q$,
 which is invariant under the transformation $q \mapsto q^{-1}$.
%  \item[(iii)] Assume $C$ has an irreducible component at which it is reduced.
%  Then there exist $N \geq 0$ and $\mathsf{n}_{g, C} \in \BQ$ such that
%  \[ \mathsf{PT}_{C}(q) = \sum_{g=-N}^{N} \mathsf{n}_{g, (\beta,d)} (q^{1/2} + q^{-1/2})^{2g-2}. \]
\end{enumerate}
\end{thm}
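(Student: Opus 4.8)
The plan is to adapt the motivic Hall algebra wall-crossing arguments of Bridgeland \cite{Br1} and Toda \cite{T16} to the $C$-local setting, the one new ingredient being that everything is carried out relative to the Chow variety. First I would form the motivic Hall algebra $H(\CA)$ of the abelian category $\CA = \Coh_{\leq 1}(X)$ of coherent sheaves on $X$ supported in dimension at most one. Because the fundamental cycle $F \mapsto [F] \in \Chow(X)$ is additive in short exact sequences, the Hilbert--Chow morphism of Lemma~\ref{Chow-Lemma} makes the stack of objects, and hence all of $H(\CA)$, fibered over $\Chow(X)$ in a way compatible with the Hall product: the cycle of a middle term equals the sum of the cycles of the sub and quotient objects. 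The $C$-local invariants are exactly the contributions supported over the point $[C] \in \Chow(X)$.

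Next I would recall Bridgeland's framing construction, in which a subscheme $Z$ and a stable pair $(F,s)$ are both realized as pairs $\CO_X \to F$, with $\Hilb_X(\beta,n)$ and $P_n(X,\beta)$ arising from two different stability conditions on such pairs. The wall-crossing between the two is encoded by an identity in a framed extension of $H(\CA)$; applying the Joyce--Song integration map --- the Behrend-function weighted Euler characteristic \cite{B, Joyce-Song} --- converts this into the numerical relation $\mathsf{DT}(q) = \mathsf{PT}(q)\cdot\mathsf{DT}_0(q)$. The key point for the refinement is that the factor $\mathsf{DT}_0(q) = M(-q)^{\chi(X)}$ arises from zero-dimensional sheaves, whose fundamental $1$-cycle is $0$; convolution with these floating points therefore preserves the Chow class, and extracting the component lying over $[C]$ yields part (i), $\mathsf{DT}_C(q) = \mathsf{PT}_C(q)\cdot\mathsf{DT}_0(q)$. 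Since the integration map is assembled from the constructible Behrend function on the ambient moduli stack together with motivic Euler characteristics, it decomposes over the fibers of $\rho_P$ and $\rho_H$; this is precisely what legitimizes using the \emph{ambient} Behrend function $\nu$ on the reduced fibers $P_n(X,C)$ and $\Hilb_X(C,n)$ while still obtaining a homomorphism on the Chow-graded pieces.

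For part (ii) I would follow Toda's wall-crossing derivation of the functional equation. Varying the stability parameter shows that the nonzero $C$-local pair invariants are governed by finitely many walls, giving rationality of $\mathsf{PT}_C(q)$; the invariance under $q \mapsto q^{-1}$ then follows from the derived-dualizing involution on $H(\CA)$, which interchanges $P_n(X,C)$ and $P_{-n}(X,C)$ and preserves the Behrend weighting. As in (i), each of these steps respects the Chow grading and hence applies verbatim to the component over $[C]$.

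The hard part will be establishing that the integration map remains a ring (Poisson) homomorphism after passing to the Chow-relative Hall algebra. This rests on the regularity --- the absence of motivic poles --- of the Hall algebra elements involved, together with the Behrend function multiplicativity identities of Joyce--Song, and these must now be tracked through the Hilbert--Chow fibration. The subtle point is to show that the constructible decomposition of the Behrend-weighted Euler characteristic over $\Chow(X)$ is genuinely compatible with the Hall product, so that the DT/PT identity localizes cleanly at $[C]$ even though $\mathsf{P}_{n,C}$ and $\mathsf{DT}_{n,C}$ are defined via the ambient Behrend function on reduced fibers rather than an intrinsic invariant of those fibers.
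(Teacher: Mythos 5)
Your overall strategy is the same as the paper's: localize Bridgeland's Hall-algebra identity at the point $[C]$ of the Chow variety, using the additivity of the fundamental cycle in short exact sequences to make the restriction compatible with the Hall product, and observe that $\mathsf{DT}_0(q)$ comes from zero-dimensional sheaves and therefore survives the localization. The paper implements your ``Chow grading'' concretely as a closed reduced substack $\CA_C \subset \CA$ of sheaves supported on $C$, proves that $\iota^{\ast}$ and $\iota_{\ast}$ are algebra homomorphisms (precisely because $\Coh_C(X)$ is closed under extensions, kernels and cokernels), and defines the $C$-local integration map using the \emph{ambient} Behrend function so that it intertwines with $\Upsilon$ under $\iota_{\ast}$ --- exactly the compatibility you flag as the ``hard part.'' So part (i) of your outline is essentially the paper's proof. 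Two genuine gaps remain.

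First, the integration-map results you cite from Bridgeland and Joyce--Song are proved under the hypothesis $H^1(X,\CO_X)=0$, which fails for the intended application $X = S \times E$ (and the theorem is explicitly stated allowing $H^1(X,\CO_X)\neq 0$). The Behrend function identities underlying the Poisson homomorphism property rest on writing $\Coh(X)$ locally as a $G$-invariant critical locus, and the original gauge-theoretic proof of this needs $H^1=0$. The paper closes this by invoking the $(-1)$-shifted symplectic structure of \cite{PTVV} together with the Darboux theorem of \cite{BBBBJ} as in \cite{T16}; without some such input your argument does not apply to $S\times E$. Second, your mechanism for the $q\mapsto q^{-1}$ symmetry --- a derived-duality involution interchanging $P_n(X,C)$ and $P_{-n}(X,C)$ --- is only available when the curve class is irreducible; for a general Cohen--Macaulay $C$ (reducible or non-reduced) duality does not preserve stability of pairs and there is no termwise identification of these moduli spaces. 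The paper instead proves a $C$-local Toda-type equation expressing $\sum_\gamma \mathsf{PT}_\gamma(-q)v^\gamma$ as $\exp\bigl(\sum n\,\mathsf{N}_{\gamma,n}v^\gamma q^n\bigr)$ times symmetric Laurent polynomials, and deduces both rationality and the functional equation from the symmetry $\mathsf{N}_{\gamma,n}=\mathsf{N}_{\gamma,-n}$ and periodicity in $n$ of the $C$-local \emph{generalized} DT invariants; the duality argument lives at the level of pure one-dimensional sheaves, not stable pairs.
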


Theorem~\ref{Theorem_Clocal} is a localization
of results of Bridgeland \cite{Br1} to the fixed curve $C \subset X$.
While the statements of Theorem \ref{Theorem_Clocal} appear to be known,
we have found no direct reference and we will give a proof below.

Related cases have appeared elsewhere.
The analog of part (i) in the Euler characteristic case (with no Behrend function weighting)
can be found in \cite{ST}.
The rationality for $C$-local invariants in case $\beta$ irreducible
is proven in \cite[Section 3]{PTBPS}.
The $C$-local case for parabolic stable pair invariants is discussed in \cite[Section 4.4]{Tpar} \cite{Tpar2}.

\subsection{Stack of coherent sheaves}
Let $\Coh(X)$ be the category of coherent sheaves on $X$ and let
\[ \Coh_{\leq 1}(X) \subset \Coh(X) \]
be the full subcategory of sheaves supported in dimension $\leq 1$.

Let $C_1, \dots, C_N$ be the (reduced) irreducible components of $C$.
We say $F \in \Coh_{\leq 1}(X)$ is supported on $C$ in dimension $1$ if
\[ [ F ] = \sum_{i} m_i [C_i] \]
for some integers $m_1, \ldots, m_N \geq 0$.
Let
\[ \Coh_C(X) \subset \Coh_{\leq 1}(X) \]
be the full subcategory of sheaves supported on $C$ in dimension $1$.
For any exact seqence
\[ 0 \to E_1 \to E_2 \to E_3 \to 0 \]
in $\Coh_{\leq 1}(X)$ we have $[E_2] = [E_1] + [E_3]$. Hence,
$E_2$ is in $\Coh_C(X)$ if and only if $E_1$ and $E_3$ is in $\Coh_C(X)$.
In other words $\Coh_C(X)$ is closed under extensions and taking kernel and cokernel.

Let $\CA$ be the stack of coherent sheaves on $X$ supported in dimension $\leq 1$.
The stack $\CA$ is algebraic and locally of finite type over $\BC$. % [Joyce, Configurations I].
Let $| \CA |$ be the set of points in $\CA$ endowed with the Zariski topology.

\begin{lemma} The subset
\[ Z_C \subset | \CA | \]
of points corresponding to sheaves supported on $C$ in dimension $1$ is closed.
\end{lemma}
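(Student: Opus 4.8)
The plan is to show that the complement $|\CA| \setminus Z_C$ is open, by exhibiting, for each sheaf $F$ not supported on $C$ in dimension $1$, an open neighborhood consisting entirely of such sheaves. The key observation is that the condition defining $Z_C$ is semicontinuous in a way that is compatible with the Zariski topology on the algebraic stack $\CA$. Since $\CA$ is algebraic and locally of finite type, it suffices to work with families: for any scheme $T$ of finite type with a map $T \to \CA$, classified by a $T$-flat family $\CF$ of sheaves supported in dimension $\leq 1$, I must show that the locus of $t \in T$ with $\CF_t \in \Coh_C(X)$ is closed in $T$.

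First I would reduce the question to a statement about cycle classes via the Hilbert--Chow morphism. By Lemma~\ref{Chow-Lemma}, the flat family $\CF$ over $T$ (or over its semi-normalization, which is a homeomorphism onto $T$) induces a continuous Hilbert--Chow map $\rho : T \to \Chow(X)$ sending $t \mapsto [\CF_t]$. The condition that $\CF_t$ be supported on $C$ in dimension $1$ is precisely the condition that the $1$-cycle $[\CF_t] = \sum_\eta \mathrm{length}_{\CO_{X,\eta}}(\CF_t)\, \overline{\{\eta\}}$ be supported set-theoretically on $C$, i.e.\ that every codimension-$2$ point $\eta$ appearing with positive multiplicity lies in $C$. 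Equivalently, writing $C = C_1 \cup \cdots \cup C_N$, the $1$-cycle $[\CF_t]$ must be a non-negative integer combination of the $[C_i]$ together with no other irreducible curves.

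The heart of the argument is therefore to show that the subset of $\Chow(X)$ consisting of effective $1$-cycles whose support is contained in $C$ is closed. This is a standard property of the Chow variety: the support of a limit of cycles is contained in the limit of the supports, so a family of cycles all supported in the closed set $C$ has every specialization again supported in $C$. Concretely, I would use that $\Chow(X) = \bigsqcup_\gamma \Chow_\gamma(X)$ decomposes by homology class, and that within each component the locus of cycles supported on $C$ is cut out by the (closed) condition of being supported on the closed subscheme $C \subset X$; this is the classical fact that $\{ Z : \mathrm{Supp}(Z) \subset C\}$ is a closed subvariety of $\Chow(X)$. Pulling this closed subset back along the continuous map $\rho$ gives that $\rho^{-1}(\{\text{cycles supported on } C\})$ is closed in $T$, which is exactly the locus where $\CF_t \in \Coh_C(X)$.

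The main obstacle I anticipate is the bookkeeping between the sheaf-theoretic condition ``$[\CF_t] = \sum_i m_i[C_i]$ with $m_i \geq 0$'' and the purely topological condition ``$\mathrm{Supp}[\CF_t] \subseteq C$''; these agree precisely because the $C_i$ are the irreducible components of $C$, so any effective cycle supported on $C$ is automatically a non-negative combination of the $[C_i]$, with no extra reduced components permitted. A secondary technical point is justifying that it suffices to test closedness on finite-type atlases $T \to \CA$: since $\CA$ is algebraic and locally of finite type, $Z_C \subset |\CA|$ is closed if and only if its preimage in every smooth atlas is closed, and the preimage is governed by a flat family to which Lemma~\ref{Chow-Lemma} applies. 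Granting the classical closedness of the support-constraint locus in $\Chow(X)$ and the continuity of the Hilbert--Chow map, the result follows.
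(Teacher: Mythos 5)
Your proof is correct and follows essentially the same route as the paper: pass to a finite-type atlas of $\CA$, use Lemma~\ref{Chow-Lemma} to obtain a continuous Hilbert--Chow map to $\Chow(X)$, and pull back a closed subset of the Chow variety. The only cosmetic difference is in justifying that closedness: the paper observes that $\bigsqcup_{m_1,\dots,m_N \geq 0}\{\sum_i m_i[C_i]\}$ is a discrete, locally finite (hence closed) set of points, while you invoke the closedness of the support-constraint locus in $\Chow(X)$ --- which, since $C$ has only finitely many irreducible components, is exactly the same set.
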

\begin{proof}
Let $\pi : U \to \CA$ be an atlas for $\CA$, where $U$ is a scheme locally of finite type.
Let $\rho : U \to \Chow(X)$ be the (topological) Hilbert-Chow map obtained by Lemma~\ref{Chow-Lemma}
from the family of coherent sheaves on $U \times X$ corresponding to $\pi$.
Let $\widetilde{Z}_C$ be the preimage under $\rho$ of the discrete (and closed) subset
\[ \bigsqcup_{m_1, \dots, m_N \geq 0} \Big\{ \sum_i m_i [C_i] \Big\} \subset \Chow(X)\]
endowed with the reduced subscheme structure.
Since $\pi$ is open, surjective and continuous and $\widetilde{Z}_C = \pi^{-1}(Z_C)$ is closed,
also $Z_C = \pi( \widetilde{Z}_C )$ is closed.
\end{proof}

Define the stack of coherent sheaves supported on $C$ as the closed reduced substack
\[ \iota : \CA_C \hookrightarrow \CA \]
with underlying topological space $Z_C$.
%\cite[\href{http://stacks.math.columbia.edu/tag/0509}{Tag 0509}]{stacks-project}.
Since $\iota$ is a closed immersion, $\iota$ is of finite type.
Hence, $\CA_C$ is an algebraic stack locally of finite type.

% \vspace{6pt}
% \noindent \textbf{Remark.} It is not clear what moduli functor $\CA_C$ represents.
% One could try to define a stack of objects in $\Coh_C(X)$ by the following construction:
% Let $\CB$ be the functor defined by assigning to a $\BC$-schemes $S$ the groupoid
% \begin{multline*}
% \CB(S) =
% \{ F \in \Coh(S \times X) | F \text{ is flat over } S \text{ and } \\
% F_s \text{ is supported on $C$ in dimension $1$ for all closed points } s \in S \}
% \end{multline*}
% together with the natural isomorphisms.
% However, in this procedure we would obtain a formal stack.
% (Consider for example the related case where
% we require a sheaf of $0$-dimensional support to be supported at a fixed point $x \in X$.
% Then the component of $\CA$ parametrizing $0$-dimensional sheaves of length $n$ is $X$,
% while $\CB$ is the formal scheme obtained from the embedding $x \in X$. \qed
% 
% 
\subsection{Hall algebras}
Let $K(\mathrm{St}/\CF)$ denote the Grothendieck ring of stacks over an algebraic stack $\CF$
\cite{Br2}.
The Hall algebra $H( \CA )$ is the $K(\mathrm{St}/\Spec \BC)$-module
$K(\mathrm{St}/\CA)$ endowed with
the Hall algebra product $\ast$ defined as follows.
Let $\Ex$ be the stack of short exact sequences in $\CA$,
and consider the diagram
\[
\begin{tikzcd}
\Ex \ar{r}{b} \ar{d}{(a,c)} & \CA \\
\CA \times \CA
\end{tikzcd}
\]
where $a,b,c$ sends families of exact sequences
$0 \to A \to B \to C \to 0$
to $A,B,C$ respectively.
Then we set
\[ [X_1 \to \CA] \ast [X_2 \to \CA ] = \big[ (X_1 \times X_2) \times_{\CA \times \CA, (a,c)} \Ex \to \Ex \xrightarrow{b} \CA \big] \]
for all $[X_i \to \CA] \in K(\mathrm{St}/\CA)$ where $X_i$ are algebraic stacks of finite type with affine geometric stabilizers.

Let $\Ex_C$ be the stack of short exact sequences in $\CA_C$, and consider
\[
\begin{tikzcd}
\Ex_C \ar{r}{b'} \ar{d}{(a',c')} & \CA_C \\
\CA_C \times \CA_C
\end{tikzcd}
\]
where $a',b',c'$ sends families of exact sequences
$0 \to A \to B \to C \to 0$
to $A,B,C$ respectively.
To define a Hall algebra $H(\CA_C)$ with good properties we require the following Lemma,
compare \cite[Assumption 7.1/8.1]{Joyce1}.

\begin{lemma} \label{123} \label{Lemma_algfintype} The stacks $\CA_C$ and $\Ex_C$ are algebraic stacks locally of finite type, and the morphisms
\[ b' : \Ex_C \to \CA_C \quad \text{ and } \quad (a',c') : \Ex_C \to \CA_C \]
are of finite type
\end{lemma}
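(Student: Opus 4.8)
The plan is to deduce the lemma from the corresponding facts for the ambient stack $\CA = \Coh_{\leq 1}(X)$, using that $\iota : \CA_C \hookrightarrow \CA$ is a closed immersion and that $\CA_C$ is closed under subobjects, quotients and extensions. For $\CA$ the required properties are standard: the stack $\Ex$ of short exact sequences is algebraic and locally of finite type, and the morphisms $b : \Ex \to \CA$ and $(a,c) : \Ex \to \CA \times \CA$ are of finite type; this is precisely the content of \cite[Assumption 7.1/8.1]{Joyce1} for the category $\Coh_{\leq 1}(X)$ (compare \cite{Br2}). Since we have already seen that $\CA_C$ is algebraic and locally of finite type, it remains to treat $\Ex_C$ together with $b'$ and $(a',c')$.

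First I would pin down the underlying topological space of $\Ex_C$. A point of $\Ex$ lies in $\Ex_C$ exactly when all three terms of the corresponding short exact sequence are supported on $C$ in dimension $1$. Because $\Coh_C(X)$ is closed under passing to subobjects and quotients as well as under extensions, this holds if and only if the middle term lies in $\CA_C$, equivalently if and only if both outer terms do. Writing $Z_C \subset |\CA|$ for the closed locus of sheaves supported on $C$, this yields
\[ |\Ex_C| \;=\; b^{-1}(Z_C) \;=\; (a,c)^{-1}(Z_C \times Z_C). \]
As $b$ and $(a,c)$ are continuous and $Z_C$ is closed, this set is closed in $|\Ex|$; equipping it with the reduced induced structure realizes $\Ex_C$ as a closed reduced substack of $\Ex$. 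A closed substack of an algebraic stack locally of finite type is again algebraic and locally of finite type, which gives the first assertion.

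The finite type statements then follow by base change. Base change of $b$ along $\iota$ gives the fiber product $\Ex \times_{\CA} \CA_C$, whose projection to $\CA_C$ is of finite type and whose underlying space is $b^{-1}(Z_C) = |\Ex_C|$; thus the reduction map $\Ex_C \to \Ex \times_{\CA} \CA_C$ is a closed immersion, and $b'$ factors as the composite $\Ex_C \to \Ex \times_{\CA} \CA_C \to \CA_C$ of a closed immersion with a finite type morphism, hence is of finite type. The same argument along $\iota \times \iota$ shows that $\Ex \times_{\CA \times \CA} (\CA_C \times \CA_C) \to \CA_C \times \CA_C$ is of finite type with underlying space $(a,c)^{-1}(Z_C \times Z_C) = |\Ex_C|$, so that $(a',c')$ factors through a closed immersion into this fiber product and is likewise of finite type.

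The one place that requires genuine care --- and the main obstacle --- is the middle equality $b^{-1}(Z_C) = (a,c)^{-1}(Z_C \times Z_C)$: the two evidently closed conditions ``middle term supported on $C$'' and ``both outer terms supported on $C$'' must cut out the same locus, and this is exactly where the closure of $\Coh_C(X)$ under subquotients and extensions enters. This coincidence is what lets the single reduced substack $\Ex_C$ serve as the source of both base-change comparisons. I would also note that the fiber products above may fail to be reduced, but since $\Ex_C$ maps into each of them by a closed immersion and finite type is stable under composition, this introduces no difficulty.
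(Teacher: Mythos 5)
Your proof is correct and follows essentially the same route as the paper: both deduce everything by base change of the finite-type morphisms $b$ and $(a,c)$ along the closed immersion $\iota$, composed with a closed immersion of $\Ex_C$ into the relevant fiber product. The only real difference is how that closed immersion is produced --- the paper identifies $\Ex_C$ with the fiber product $\Ex \times_{\CA^3} (\CA_C)^3$ using that $(a,b,c)$ is an iso-fibration, whereas you realize $\Ex_C$ as the reduced closed substack of $\Ex$ supported on $b^{-1}(Z_C) = (a,c)^{-1}(Z_C \times Z_C)$, invoking the closure of $\Coh_C(X)$ under subquotients and extensions.
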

\begin{proof}
We have already seen that $\CA_C$ is algebraic and locally of finite type.
We consider $\Ex_C$. Since $(a,b,c)$ is a iso-fibration \cite[Appendix]{Br2}, we have the fiber square
\[
\begin{tikzcd}
\Ex_C \ar{d}{(a',b',c')} \ar{r} & \Ex \ar{d}{(a,b,c)} \\
(\CA_C)^3 \ar{r} & ( \CA )^3.
\end{tikzcd}
\]
Hence $\Ex_C$ is the fiber product of algebraic stacks and therefore algebraic.
Moreover, since $(a,b,c)$ is of finite type, so is $(a',b',c')$, and hence $\Ex_C$ is locally of finite type.

We show $b'$ is of finite type. The case $(a', c')$ is similar. Consider the fiber diagram
\[
 \begin{tikzcd}
 \widetilde{\Ex} \ar{d}{\widetilde{b}} \ar{r}{j} & \Ex \ar{d}{b} \\
 \CA_C \ar{r} & \CA.
\end{tikzcd}
\]
Since $b$ is of finite type \cite{Br1},\cite[9.4]{Joyce1} so is $\widetilde{b}$.
The stack $\Ex_C$ is the fiber product
\[
\begin{tikzcd}
\Ex_C \ar{r}{\iota'} \ar{d} & \widetilde{\Ex} \ar{d}{( a\circ j,\, b\circ j)} \\
(\CA_C)^2 \ar{r}{(\iota, \iota)} & \CA^2.
\end{tikzcd}
\]
Since $(\iota, \iota)$ is of finite type (as a closed immersion) so is $\iota'$.
We conclude the composition
$b' = \widetilde{b} \circ \iota' : \Ex_C \to \CA_C$
is of finite type.
\end{proof}

Define the Hall algebra $H(\CA_C)$ of $\CA_C$
as the group
$K(\mathrm{St}/\CA_C)$
together with the Hall algebra product
% Let $H(\CA_C)$ be the 
% the Hall algebra of coherent sheaves supported on $C$,
% \[ H(\CA_C), \]
% defined as the $K(\mathrm{St}/\BC)$-module
% $K(\mathrm{St}/\CA_C)$
\[ [X_1 \to \CA_C ] \ast [X_2 \to \CA_C ] = \big[ (X_1 \times X_2) \times_{\CA_C \times \CA_C, (a',c')} \Ex_C \to \Ex_C \xrightarrow{b'} \CA_C \big]. \]
By Lemma \ref{123} the product $\ast$ is well-defined \cite{Br2}.

Consider the closed immersion 
$\iota : \CA_C \to \CA$.
Since $\iota$ is of finite type
we have maps of $K(\mathrm{St}/\BC)$-modules \cite[3.5]{Br2}
\[ \iota^{\ast} : H(\CA) \to H(\CA_C), \quad \quad \iota_{\ast} : H(\CA_C) \to H(\CA). \]

\begin{lemma} \label{Lemma_iota_algebra_morphism} The maps $\iota^{\ast}$ and $\iota_{\ast}$ are algebra homomorphisms. \end{lemma}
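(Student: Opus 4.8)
The plan is to reduce both assertions to a single identification of stacks of short exact sequences and then feed it through the standard base-change and functoriality formalism for the relative Grothendieck rings $K(\mathrm{St}/-)$ of \cite{Br2}. Recall that for the finite-type closed immersion $\iota$ one has $\iota_{\ast}[X \to \CA_C] = [X \to \CA_C \xrightarrow{\iota} \CA]$ and $\iota^{\ast}[Y \to \CA] = [Y \times_{\CA} \CA_C \to \CA_C]$, and that both operations commute with composition and with base change in $K(\mathrm{St}/-)$. The entire content of the lemma lies in how the subcategory condition interacts with the span $\CA \times \CA \xleftarrow{(a,c)} \Ex \xrightarrow{b} \CA$ defining the product.

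First I would establish the key fiber-product identity. Since $\CA_C$ is closed under extensions, an exact sequence $0 \to A \to B \to C \to 0$ with $A, C \in \CA_C$ has $B \in \CA_C$; conversely, closure under subobjects and quotients shows that $B \in \CA_C$ forces $A, C \in \CA_C$. Hence the three substacks of $\Ex$ obtained by imposing $(a,c) \in \CA_C \times \CA_C$, by imposing $b \in \CA_C$, and by imposing $(a,b,c) \in \CA_C^3$ all coincide, and all equal $\Ex_C$. In particular
\[ \Ex_C = \Ex \times_{\CA \times \CA,\, (a,c)} (\CA_C \times \CA_C), \]
the map $b$ restricts along $j : \Ex_C \hookrightarrow \Ex$ to $\iota \circ b'$, and this is exactly the Cartesian square already used in the proof of Lemma \ref{Lemma_algfintype}. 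The one point to verify with care is that these are genuine isomorphisms of stacks and not merely bijections on points; this follows because $\Ex_C$ was defined as the reduced fiber product over $\CA_C^3$ and the embedding $j$ is a closed immersion.

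For $\iota_{\ast}$, take $x = [X_1 \to \CA_C]$ and $y = [X_2 \to \CA_C]$. Unwinding $\iota_{\ast} x \ast \iota_{\ast} y$, the product in $H(\CA)$ is represented by $(X_1 \times X_2) \times_{\CA \times \CA,(a,c)} \Ex$ with structure map $b$; but the maps $X_i \to \CA$ factor through $\CA_C$, so by the first step the fiber product lands in $\Ex_C$ and $b$ becomes $\iota \circ b'$. This is precisely the representative of $\iota_{\ast}(x \ast y)$, so $\iota_{\ast}$ is multiplicative. For $\iota^{\ast}$, take $x = [Y_1 \to \CA]$, $y = [Y_2 \to \CA]$ and write $x \ast y = [W \xrightarrow{b_W} \CA]$ with $W = (Y_1 \times Y_2) \times_{\CA \times \CA,(a,c)} \Ex$ and $b_W$ recording the middle object. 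Then $\iota^{\ast}(x \ast y) = [W \times_{\CA, b_W} \CA_C \to \CA_C]$ is the locus where $B \in \CA_C$, which by the first step is the locus where $A, B, C \in \CA_C$; this coincides with $\big((Y_1 \times_{\CA} \CA_C) \times (Y_2 \times_{\CA} \CA_C)\big) \times_{\CA_C \times \CA_C,(a',c')} \Ex_C$, the representative of $\iota^{\ast} x \ast \iota^{\ast} y$.

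Thus both maps respect $\ast$. The main obstacle is entirely concentrated in the first step: checking that ``supported on $C$'' imposed on the sub-and-quotient, on the extension alone, or on all three terms of an exact sequence defines the same substack. This is where closure under extensions (for $\iota_{\ast}$) and closure under subobjects and quotients (for $\iota^{\ast}$) are each used, and it must be verified as an isomorphism of stacks rather than of point sets. Everything else is the formal bookkeeping of fiber products and the module-map properties of $\iota^{\ast}, \iota_{\ast}$ recorded in \cite{Br2}, which I would cite rather than reprove.
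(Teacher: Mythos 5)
Your proof is correct and follows essentially the same route as the paper: both arguments reduce to the fact that, since $\Coh_C(X)$ is closed under extensions, kernels and cokernels, the support condition may be imposed on any of the three terms of a short exact sequence, and then unwind the fiber products defining $\ast$ for $\iota^{\ast}$ and $\iota_{\ast}$. The one caveat is that your insistence on a genuine isomorphism of stacks (rather than a bijection on points) is neither justified as written --- the scheme structures of the various fiber products could a priori differ, since $\CA_C$ is only defined as a reduced substack --- nor needed: the relations in $K(\mathrm{St}/\CA_C)$ identify classes under geometric bijections, which is exactly what the paper invokes at this step.
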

\begin{proof}
Let $i=1,2$ let $x_i = [X_i \to \CA] \in H(\CA)$
with $X_i$ an algebraic stack of finite type with affine stablizers.
We will prove
\[ \iota^{\ast}(x_1 \ast x_2) = \iota^{\ast}(x_1) \ast \iota^{\ast}(x_2). \]
The case $\iota_{\ast}$ is similar.

The product $x_1 \ast x_2$ is the element $w = [W \to \CA]$ defined by the top row
of the fiber diagram
\[
\begin{tikzcd}
W \ar{r} \ar{d} & \Ex \ar{r}{b} \ar{d}{(a,c)} & \CA \\
X_1 \times X_2 \ar{r} & \CA \times \CA.
\end{tikzcd}
\]
Pulling back the top row via $\iota : \CA_C \to \CA$
defines the element 
\begin{equation} [ \iota^{\ast}(w)] = [ \widetilde{W} \to \widetilde{\Ex} \to \CA_C ], \label{13134} \end{equation}
where $\widetilde{\Ex}$ is as in the proof of Lemma~\ref{Lemma_algfintype}.

With $\iota^{\ast}(X_i) = X_i \times_{\CA} \times \CA_C$ for $i=1,2$
the top row of
\[
\begin{tikzcd}
W_C \ar{d} \ar{r} & \Ex_C \ar{r}{b'} \ar{d}{(a',c')} & \CA_C \\
\iota^{\ast}(X_1) \times \iota^{\ast}(X_2) \ar{r} & \CA_C \times \CA_C
\end{tikzcd}
\]
defines the element
$\iota^{\ast}(x_1) \ast \iota^{\ast}(x_2) = [ W_C \to \Ex_C \to \CA_C ]$.

By a diagram chase there is a morphism $f: W_C \to W$
commuting with the maps to $\CA_C$.
Since $\Coh_{C}$ is closed under taking extension and taking kernel and cokernel,
$f$ is a geometric bijection. Hence
$[ W_C \to \CA_C ] = [ \widetilde{W} \to \CA_C ]$
and the claim follows.
\end{proof}

\subsection{Integration maps and removing the $H^1(X,\CO_X) = 0$ requirement}
Consider the commutative ring
\[ \Lambda = K(\mathrm{Var})\left[\BL^{-1} , ( \BL^n + \ldots + 1 )^{-1}, n \geq 1 \right]. \]
Define the subalgebra of regular elements
\[ H^{\text{reg}}(\CA) \subset H(\CA) \]
as the $\Lambda$-module generated by all elements $[X \to \CA]$ where $X$ is a variety.
The semi-classical limit of $H^{\text{reg}}(\CA)$ is
\[ H^{\mathrm{sc}}(\CA) = H^{\text{reg}}(\CA)/(\BL - 1) H^{\text{reg}}(\CA) \]
endowed with the usual Poisson bracket \cite{Br2}.
Let also
\[ \Delta = H_2(X,\BZ) \oplus H_0(X, \BZ) = H_2(X,\BZ) \oplus \BZ \]
and let 
$\BC[\Delta]$ be the monoid algebra of $\Delta$
with basis $q^n v^{\beta}$ for all $n \in \BZ$ and $\beta \in H_2(X,\BZ)$,
endowed with the trivial Poisson structure.

We define an integration map
\begin{equation} \Upsilon : H^{\mathrm{sc}}(\CA) \to \BC[\Delta]. \label{YYY} \end{equation}
The stack $\CA$ splits into a disjoint union
\[ \CA = \bigsqcup_{(\beta,n) \in \Delta} \CA_{\beta,n} \]
where $\CA_{\beta,n}$ parametrizes sheaves of Chern character $(0,0, \beta,n)$.
Then, given a variety $V$ and a map $V \to \CA_{\beta,n}$ we define
\[ \Upsilon\big( [V \xrightarrow{f} \CA_{n,\beta}] \big) \, = \, e\big( V, f^{\ast}(\nu_{\CA}) \big) \cdot q^n v^{\beta} \]
where $\nu_{\CA} : \CA \to \BC$ is the Behrend function on $\CA$
and $e(\, \cdot \, )$ is the topological Euler characteristic.
%Since elements $[V \to \CA_{\beta,n}]$ span $H^{\mathrm{sc}}(\CA)$ this determines $\Upsilon$.

In \cite[Section 5]{Br2} Bridgeland (based on the work of Joyce-Song \cite{Joyce-Song})
proves that $\Upsilon$ is well-defined and, in case $H^1(X,\CO_X) = 0$, is a \emph{Poisson algebra homomorphism}.
We explain how the last result can be extended to the case of non-vanishing $H^1(X,\CO_X)$.
The key step in \cite[Section 5]{Br2} is to prove the Behrend function identitites \cite[Eqns. (13), (14)]{Br2}.
%resp. \cite[Eqns (5.2), (5.3)]{Joyce-Song} also for $H^1(X,\CO_X) \neq 0$.
These are proven in \cite[Ch. 10]{Joyce-Song}
and the proof relied upon the $H^1(X,\CO_X)$ condition only through \cite[Thm 5.5]{Joyce-Song}
which states that the stack of coherent sheaves $\Coh(X)$ on $X$
at any point $[E]$ can be written locally as a $G$-invariant critical locus
(with $G$ the maximal reductive subgroup of the automorphism group of $[E]$).
Hence, it is enough to show \cite[Thm 5.5]{Joyce-Song} also for $H^1(X,\CO_X) \neq 0$.

The original proof of \cite[Thm 5.5]{Joyce-Song} by Joyce-Song used gauge-theoretic arguments
and required $H^1(X,\CO_X) = 0$ crucially, see \cite[Rmk 5.2]{Joyce-Song}.
A second (and more general) proof was recently established in \cite{T16}.
The new input here is the existence of a $(-1)$-shifted symplectic structure
on a derived extension of $\Coh(X)$ shown in \cite{PTVV}.
Combining this with an algebraic Darboux Theorem \cite{BBBBJ}
leads to the desired local $G$-invariant charts for $\Coh(X)$.
Since \cite{PTVV} and the following discussion in \cite{T16} do not require $H^1(X, \CO_X) = 0$,
the second proof works also $H^1(X, \CO_X) \neq 0$.
Hence the Behrend function identities hold
and \eqref{YYY} is a Poisson algebra morphism also in the case $H^1(X, \CO_X) \neq 0$.

\subsection{The integration map for $\CA_C$}
Let $H^{\text{reg}}(\CA_C) \subset H(\CA_C)$ be the regular subalgebra,
that is the $\Lambda$-module generated by $[V \to \CA_C]$ with $V$ a variety,
and let $H^{\mathrm{sc}}(\CA_C)= H^{\text{reg}}(\CA_C)/(\BL - 1)H^{\text{reg}}(\CA_C)$
be the semiclassical limit.

Let $C_1, \dots, C_N$ be the irreducible components of $C$ and let
\[ \Delta_C = H_2(C,\BZ) \oplus \BZ = \oplus_{i=1}^{N} \BZ [C_i] \oplus \BZ, \]
where we identify an element $\gamma  \in H_2(C,\BZ)$ with the corresponding $1$-cycle on $X$.
We have the disjoint union
\[ \CA_C = \bigsqcup_{(\gamma,n) \in \Delta_C} \CA_{C,\gamma,n} \]
with $\CA_{C,\gamma,n}$ parametrizing sheaves supported on $C$ of
cycle class $\gamma$ and Euler characteristic $n$.
Define the integration map
\[ \Upsilon_C : H^{\mathrm{sc}}(\CA_C) \to \BC[\Delta_C] \]
by 
\[ \Upsilon_C([V \xrightarrow{f} \CA_C]) = e(V, (\iota \circ f)^{\ast} \nu_{\CA}) \cdot q^n v^{\gamma} \]
for all maps $V \to \CA_{C,\gamma,n}$ with $V$ a variety.
Note the Euler characteristic is weighted by the Behrend function of $\CA$.

Alternatively,
consider $H(\CA)$ (resp. $H(\CA_C)$) as $\Delta$-graded (resp. $\Delta_C$-graded) algebras with respect to the Chern character,
and likewise for $H^{\text{reg}}$ and $H^{\text{sc}}$.
Then
$\Upsilon_C : H^{\mathrm{sc}}(\CA_C) \to \BC[\Delta_C]$
is the unique $\Delta_C$-\emph{graded} homorphism such that
\[ j_{\ast} \circ \Upsilon_C = \Upsilon \circ \iota_{\ast} \]
where $j : C \to X$ is the inclusion and $j_{\ast} : \BC[\Delta_C] \to \BC[\Delta]$ is the map sending $v^{[C_i]}$ to $v^{j_{\ast}[C_i]}$.
Hence, by Lemma \ref{Lemma_iota_algebra_morphism}, $\Upsilon_C$ is a morphism of Poisson algebras.

% Since $\iota : \CA_C \to \CA$ is of finite type (and hence representable)
% $\iota^{\ast}$ and $\iota_{\ast}$ maps the regular subalgebras of $\CA$ and $\CA_C$ into each other.

\subsection{The $C$-local DT/PT correspondence}
We prove Theorem \ref{Theorem_Clocal} (i). As in \cite{Br1, Br2} we will work with a completion of the Hall algebra $H(\CA)$
with respect to the set of \emph{Laurent} subsets of $H_2(X,\BZ) \oplus \BZ$,
and similarly for $H(\CA_C)$ and $H^{\text{reg}}$.
All formulas below shall be understood in this larger algebra.

Following the notation of \cite{Br2} we let
\[
\CH_{\leq 1} = \sum_{\beta,n} [\Hilb_X(\beta,n) \to \CA], \quad
\CH_{\leq 1}^{\sharp} = \sum_{\beta,n} [P_n(X,\beta) \to \CA], \]
\[ \CH_0 = \sum_n [ \Hilb_X(0,n) \to \CA ],
\]
where the maps in the Hilbert scheme case are defined by sending a subscheme to its structure sheaf,
and in the pairs case we send $[\CO_X \to F]$ to $F \in \CA$.
The elements $\CH_{\leq 1}, \CH_{\leq 1}^{\sharp}, \CH_0$
are regular elements (in some completion) of the Hall algebra $H(\CA)$.
We also let $\CP \subset \CA$ be the open substack of sheaves of dimension $0$,
and write $1_{\CF} = [\CF \to \CA]$ for an open substack $\CF \subset \CA$.

\begin{proof}[Proof of Theorem \ref{Theorem_Clocal} (i)]
By \cite[Prop. 6.5]{Br2} we have the identity
\begin{equation*} \CH_{\leq 1} \ast 1_{\CP} = \CH_0 \ast 1_{\CP} \ast \CH_{\leq 1}^{\sharp} \label{564564} \end{equation*}
Applying $\iota^{\ast}$ yields by Lemma \ref{Lemma_iota_algebra_morphism}
\begin{equation} \label{MA} \iota^{\ast}(\CH_{\leq 1}) \ast \iota^{\ast}(1_{\CP}) = \iota^{\ast}(\CH_0) \ast \iota^{\ast}(1_{\CP}) \ast \iota^{\ast}(\CH_{\leq 1}^{\sharp}). \end{equation}

Recall the grading on $H(\CA_C)$ by Chern character.
We have the $(\ast, [C])$-components
\begin{align*}
\CH_C & := \Big[ \iota^{\ast}(\CH_{\leq 1}) \Big]_{[C]} = \sum_{n} \big[ \Hilb_X(C,n) \to \CA_C \big] \\
\CH^{\sharp}_{C} & := \Big[ \iota^{\ast}(\CH_{\leq 1}^{\sharp}) \Big]_{[C]} = \sum_n \big[ P_n(X, C) \to \CA_C \big].
\end{align*}
The $(\ast, C)$-component of \eqref{MA} hence reads
\begin{equation} \CH_C \ast \iota^{\ast}(1_{\CP}) = \iota^{\ast}(\CH_0) \ast \iota^{\ast}(1_{\CP}) \ast \CH^{\sharp}_{C}. \label{1234522} \end{equation}
We apply $\iota_{\ast}$ and use Lemma \ref{Lemma_iota_algebra_morphism} again. Since
$\Coh_C(X)$ contains all sheaves supported in dimension~$0$ the map
$\CP \times_{\CA} \CA_C \to \CP$ is a geometric bijection, and similarly for $\CH_0$. Hence
\[ \iota_{\ast} \iota^{\ast}( 1_{\CP} ) = 1_{\CP}, \quad \quad \iota_{\ast} \iota^{\ast}( \CH_0 ) = \CH_0. \]
Inserting into \eqref{1234522} yields
the equality
\begin{equation} \iota_{\ast} \CH_C = \CH_0 \ast 1_{\CP} \ast \iota_{\ast}(\CH^{\sharp}_{C}) \ast 1_{\CP}^{-1} \label{MDFSDGSFG} \end{equation}
By \cite[Thm 3.1]{Br1} we have
\[
%\pushQED{\qed}
\Upsilon( \iota_{\ast} \CH_C ) = \mathsf{DT}_{C}(q), \quad \quad \Upsilon( \iota_{\ast} \CH^{\sharp}_{C} ) = \mathsf{PT}_{C}(q) %\qedhere
%\popQED{\qed} 
\]
hence the claim follows from \eqref{MDFSDGSFG} by applying the integration map $\Upsilon$.
\end{proof}

\subsection{$C$-local generalized DT invariants}
Let $H$ be an ample divisor on $X$.
For all $\alpha = (\beta,n) \in \Delta$ define the slope
\[ \mu(\alpha) = \frac{ \ch_3(\alpha) }{ \ch_2(\alpha) \cdot H } = \frac{n}{\beta \cdot H}. \]
We have the induced slope function $\mu_C$ defined for all $\alpha = (\gamma,n) \in \Delta_C$
by
\[ \mu_C(\alpha) = \mu( j_{\ast}(\alpha)) = \frac{n}{\gamma \cdot \iota^{\ast}(H)} \]
where $j_{\ast} : \Delta_C \to \Delta$ is the pushforward map defined by $j:C \hookrightarrow X$.
Since $\mu$ defines a weak stability condition on $\CA$,
so does $\mu_C$ on $\CA_C$.

For all $\alpha \in \Delta_C$ let
\[ \CM^{\text{ss}}_C(\alpha) \subset \CA_{C,\alpha} \]
be the open substack of $\mu_C$-semistable sheaves of class $\alpha$.
Since $\CM^{\text{ss}}_C(\alpha)$ is a closed substack of the stack
of semistable sheaves in $\CA$ of class $j_{\ast} \alpha$ and the latter is of finite type,
also $\CM^{\text{ss}}_C(\alpha)$ is of finite type.

Let $\delta_{\alpha} = [ \CM^{\text{ss}}_C(\alpha) \to \CA_C ]$, and define elements
$\epsilon_{\alpha}$ by setting
\[
\sum_{\substack{\alpha \in \Delta_C \\ \mu_C(\alpha) = \mu}} \epsilon_{\alpha}
=
\log\Big( 1 + \sum_{\substack{\alpha \in \Delta_C \\ \mu_C(\alpha) = \mu}} \delta_{\alpha} \Big)
\]
for all $\mu \in \BR \cup \{ \infty \}$. By Joyce's theory of virtual indecomposables, see \cite[3.2]{Joyce-Song} or \cite{Joyce3},
the element
$\eta_{\alpha} = [ \BC^{\ast} ] \epsilon_{\alpha}$ lies in $H^{\text{reg}}(\CA)$.
The $C$-local generalized Donaldson-Thomas invariants $\mathsf{N}_{\gamma,n}$ of $X$ in class $(\gamma,n) \in \Delta_C$ are defined by
\begin{equation}
\Upsilon_C( \eta_{(\gamma,n)} ) = - \mathsf{N}_{\gamma,n} v^{\gamma} q^n.
\label{35232} \end{equation}

%The combination of \cite[Thm 6.16]{Joyce-Song}, \cite[Lemma 7.1]{Br1}, and \cite{Tpar2} yields.
\begin{lemma}[\cite{Joyce-Song}, \cite{Br1}, \cite{Tpar2}] \label{Lemma_Relations_NN}
Let $\gamma = \sum_i m_i [C_i]$ with $m_1, \dots, m_N \geq 0$
be a non-zero element in $H_2(C,\BZ)$. Then for all $n \in \BZ$ the following holds.
\begin{enumerate}
\item[(i)] $\mathsf{N}_{\gamma,n}$ does not depend on the choice of polarization $H$.
\item[(ii)] $\mathsf{N}_{\gamma,n} = \mathsf{N}_{\gamma,-n}$.
\item[(iii)] Let $g = \mathrm{gcd}(m_1, \ldots, m_N)$. Then
$\mathsf{N}_{\gamma,n} = \mathsf{N}_{\gamma,n+g}$.
\item[(iv)] Assume there exist an $i$ such that $m_i > 0$ and the geometric genus of $C_i$ is non-zero.
Then $\mathsf{N}_{\gamma,n} = 0$.
\end{enumerate}
\end{lemma}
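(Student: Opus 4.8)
The four statements are the $C$-local incarnations of the standard symmetries of Joyce--Song generalized Donaldson--Thomas invariants, and my plan is to adapt the arguments of \cite{Joyce-Song, Br1, Tpar2} to the category $\CA_C$. In every case the strategy is the same: produce an operation on $\CA_C$ — a change of weak stability condition, the dualizing anti-equivalence, a line-bundle twist, or the action of a Jacobian — which preserves the restriction of the Behrend function $\nu_{\CA}$, track its effect on the class $(\gamma,n) \in \Delta_C$, and read off the resulting identity for $\Upsilon_C(\eta_{(\gamma,n)})$. For (i) the crucial input is that the antisymmetrized Euler form vanishes identically on classes of sheaves supported in dimension $\leq 1$ on the threefold $X$: for $\ch(F)=(0,0,\gamma_1,n_1)$ and $\ch(F')=(0,0,\gamma_2,n_2)$ a Riemann--Roch computation gives $\chi(F,F')=0$, because the degree-$6$ part of $\ch(F)^{\vee}\ch(F')\td(X)$ vanishes. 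Hence the Poisson bracket on $\BC[\Delta_C]$ is trivial, as already recorded in the definition of $\Upsilon_C$, so $\Upsilon_C$ annihilates every bracket coming from the Hall algebra. Feeding this into Joyce's wall-crossing formula, which writes the change of $\epsilon_\alpha$ under a variation of $\mu_C$ as a sum of iterated brackets of the $\epsilon_\bullet$, shows $\Upsilon_C(\eta_{(\gamma,n)})$ is unchanged; in particular it is independent of the polarization $H$. This independence is what makes the twists in (ii) and (iii) usable with a fixed $H$.

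For (ii) I would use the dualizing functor $F\mapsto \ext^2_X(F,\CO_X)$. Any $\mu_C$-semistable sheaf of class $(\gamma,n)$ with $\gamma\neq 0$ is pure of dimension $1$, since a zero-dimensional subsheaf would have slope $+\infty$ and destabilize it; on this locus the functor is an exact anti-equivalence sending pure sheaves to pure sheaves, reversing the slope $\mu_C\mapsto-\mu_C$, interchanging sub- and quotient objects, and hence carrying $\mu_C$-semistables of class $(\gamma,n)$ to those of class $(\gamma,-n)$ (on a Calabi--Yau threefold $\td_1=0$, so $\chi=\ch_3$ flips sign while $\ch_2=\gamma$ is preserved). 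It preserves $\nu_{\CA}$ as it is induced by an anti-autoequivalence of $D^b(X)$ respecting the Calabi--Yau structure. Matching the elements $\delta_\alpha,\epsilon_\alpha,\eta_\alpha$ under this identification yields $\mathsf{N}_{\gamma,n}=\mathsf{N}_{\gamma,-n}$.

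Part (iii) is where I expect the main difficulty to lie, and it is precisely the reason the period is the \emph{fine} invariant $g=\gcd(m_i)$ rather than a multiple of it. Tensoring by a line bundle $M$ shifts the Euler characteristic by $c_1(M)\cdot\gamma=\sum_i m_i\,(c_1(M)\cdot C_i)$ while preserving $\gamma$, semistability and $\nu_{\CA}$, so with the stability-independence of (i) this gives $\mathsf{N}_{\gamma,n}=\mathsf{N}_{\gamma,\,n+c_1(M)\cdot\gamma}$. A \emph{global} line bundle on $X$ only produces shifts in a subgroup of $g\BZ$ (indeed $g\mid c_1(M)\cdot\gamma$ for every $M$), so to realize the shift $g$ exactly I would exploit the $C$-local flexibility: the multidegree map $\Pic(C)\to\BZ^N$ is surjective, so one can pick a line bundle on $C$ of multidegree $(d_i)$ with $\sum_i m_i d_i=g$, extend it to a neighborhood of $C$ in $X$ (the obstruction lies in $H^2(C,\CO_C)=0$), and twist by it. The point I expect to have to justify carefully is that such a twist, defined only near $C$, still induces an automorphism of $\CA_C$ preserving the ambient Behrend function $\nu_{\CA}$; this should follow because $\nu_{\CA}(F)$ is determined by the local model of $\CA$ at $[F]$, which depends only on $R\hom_X(F,F)$ and hence only on a neighborhood of $\operatorname{supp}(F)=C$.

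Finally, for (iv) the positive geometric genus of some component $C_i$ with $m_i>0$ supplies a positive-dimensional abelian variety $A=\Pic^0(\widetilde{C_i})$ acting on $\CA_C$ by twisting the restriction along $C_i$ by degree-$0$ line bundles, preserving the class $(\gamma,n)$, semistability and $\nu_{\CA}$. Since $m_i>0$ this action has positive-dimensional orbits, and because $e(A)=0$ the Behrend-weighted Euler characteristics computing $\Upsilon_C(\eta_{(\gamma,n)})$ vanish. The delicate point, as in \cite[Ch.~12]{Joyce-Song}, is controlling the loci where the $A$-action has positive-dimensional stabilizers; I would stratify by stabilizer type and use multiplicativity of the Euler characteristic under the free quotients so that each stratum contributes zero, giving $\mathsf{N}_{\gamma,n}=0$.
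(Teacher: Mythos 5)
Your proposal is correct and follows essentially the same route as the paper, whose proof consists of citing Joyce--Song (Thm.\ 6.16) for (i), Bridgeland (Lemma 7.1) for (ii), and Toda's Jacobian-localization arguments for (iii)--(iv); your sketch faithfully reconstructs exactly those arguments (vanishing of the Euler pairing on dimension $\leq 1$ classes killing all wall-crossing terms, the derived dual, line-bundle twists on a neighborhood of $C$, and the $\Pic^0$-action with vanishing Euler characteristic). The only points to tighten are ones you already flag: a twist of nonzero multidegree does \emph{not} preserve $\mu_C$-semistability, so in (iii) one genuinely needs the independence of (i) in the stronger form of independence of the weak stability condition, and in (iv) one must verify that stabilizers have finite image in the abelian part of $\Pic^0$ so that every orbit has Euler characteristic zero.
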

\begin{proof}
The independence of $H$ is \cite[Thm 6.16]{Joyce-Song}.
(ii) follows by an argument identical to \cite[Lemma 7.1]{Br1}. (iii) and (iv) is \cite[Section 2.9]{Tpar2}.
\end{proof}

\subsection{Rationality} \label{Section:Rationality}
Since $P_n(X,C)$ and therefore $\mathsf{PT}_C(q)$ only depends on the cycle class $[C] \in \Chow(X)$ we may write
$\mathsf{PT}_{[C]}(q) = \mathsf{PT}_C(q)$.

\begin{thm} \label{Theorem_Toda_equation}
The $C$-local Toda equation holds:
\begin{equation}
\sum_{\gamma \in H_2(C,\BZ)} \mathsf{PT}_{\gamma}(-q) v^{\gamma}
 = \exp\left( \sum_{\gamma > 0, n \geq 0} n \cdot \mathsf{N}_{\gamma,n} v^{\gamma} q^n \right) \cdot \sum_{\gamma \geq 0} L_{\gamma}(q),
\label{TodaEqn}
\end{equation}
where every $L_\gamma(q)$ is a Laurent polynomial invariant under $q \mapsto q^{-1}$.
% \begin{enumerate}
%  \item[(a)] 
%  \item[(b)] Let $g = \mathrm{gcd}(m_1, \ldots, m_n)$ where $\gamma = \sum_i m_i [C_i]$.
%   Then $N_{\beta,n} \in \BQ$ satisfy $N_{\beta,n} = N_{\beta,-n}$ and
%  $N_{\beta,n} = N_{\beta, n+g}$ for all $n$.
%  \end{enumerate}
\end{thm}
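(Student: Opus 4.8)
The plan is to adapt Bridgeland's rationality argument \cite{Br1} to the $C$-local category $\CA_C$ and transport a Hall-algebra factorization to $\BC[\Delta_C]$ by the integration map $\Upsilon_C$. I would work throughout in the completed regular Hall algebra and use the generalized DT elements already at hand: for $\alpha = (\gamma,n) \in \Delta_C$ the Joyce logarithm $\epsilon_\alpha$ and the regular element $\eta_\alpha = [\BC^{\ast}]\epsilon_\alpha$, normalized by $\Upsilon_C(\eta_{(\gamma,n)}) = -\mathsf{N}_{\gamma,n} v^\gamma q^n$ from \eqref{35232}. The left-hand side of \eqref{TodaEqn} is, after the sign substitution $q \mapsto -q$ absorbing the Behrend weights, the image $\Upsilon_C(\iota^{\ast}\CH_{\leq 1}^{\sharp})$ of the stable-pairs element, whose $v^\gamma$-graded pieces recover the series $\mathsf{PT}_\gamma$.

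The central step is a factorization of $\iota^{\ast}\CH_{\leq 1}^{\sharp}$ in the Hall algebra. Viewing stable pairs as the limit-stable $\CO_X$-framed objects, I would follow \cite{Br1} and interpolate between the stable-pairs chamber and a trivial chamber through a family of weak stability conditions, recording each wall-crossing. Because the framing object is the structure sheaf $\CO_X$, the wall attached to a sheaf class $(\gamma,n)$ enters with multiplicity equal to the Euler pairing $\chi(\CO_X, F) = \chi(F) = n$, and only walls with $n \geq 0$ are crossed; this is the expected origin of both the weight $n$ and the range $n \geq 0$ in \eqref{TodaEqn}. The outcome is an expression for $\iota^{\ast}\CH_{\leq 1}^{\sharp}$ as an ordered $\ast$-product of exponentials $\exp_{\ast}$ of the $n\,\eta_{(\gamma,n)}$ with $\gamma > 0$ and $n \geq 0$, multiplied by an explicit boundary term $\CL$ coming from the trivial chamber.

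Applying $\Upsilon_C$ then produces \eqref{TodaEqn}. Since the Euler form vanishes identically on classes of $1$-dimensional sheaves, the target $\BC[\Delta_C]$ is Poisson-trivial and $\Upsilon_C$ is an ordinary algebra homomorphism, so by the Baker--Campbell--Hausdorff formula the ordered product of exponentials collapses to the single factor $\exp\big(\sum_{\gamma > 0,\, n \geq 0} n\,\mathsf{N}_{\gamma,n} v^\gamma q^n\big)$, using Lemma~\ref{Lemma_iota_algebra_morphism} and the fact, established earlier, that $\Upsilon_C$ is a morphism of Poisson algebras. The image of the boundary term $\CL$ is then defined to be $\sum_{\gamma \geq 0} L_\gamma(q)$.

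It remains to prove each $L_\gamma(q)$ is a Laurent polynomial invariant under $q \mapsto q^{-1}$. Finiteness holds because for a fixed cycle class $\gamma$ the moduli spaces $P_n(X,C)$ are empty for $n$ sufficiently negative and, for $n$ large, the generating series is governed entirely by the DT exponential; hence only finitely many powers of $q$ survive in $L_\gamma$. The symmetry under $q \mapsto q^{-1}$ I would deduce from the invariance of the whole construction under the derived dualizing involution together with the symmetry $\mathsf{N}_{\gamma,n} = \mathsf{N}_{\gamma,-n}$ of Lemma~\ref{Lemma_Relations_NN}(ii). The main obstacle is the wall-crossing computation of the second paragraph: one must verify that the $\CO_X$-framing contributes exactly the multiplicity $n$ over the range $n \geq 0$, that the a priori infinite product of wall-crossing factors converges in the completed Hall algebra, and that the residual boundary term is both finite and $q \mapsto q^{-1}$ symmetric. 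Isolating and controlling this boundary term, rather than the formal application of $\Upsilon_C$, is where the substance of the proof lies.
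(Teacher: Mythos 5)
Your proposal follows essentially the same route as the paper: the paper's proof of Theorem~\ref{Theorem_Toda_equation} is precisely ``run Bridgeland's Section~7 argument inside $\CA_C$, applying $\iota^{\ast}$ at every step and then the Poisson algebra homomorphism $\Upsilon_C$,'' which is what you outline. The one ingredient the paper makes explicit that you leave implicit is that the $\mu_C$-semistable locus in $\CA_C$ is set-theoretically the restriction of the $\mu$-semistable locus in $\CA$; this is what guarantees each wall-crossing step of Bridgeland's argument transports correctly to the $C$-local setting.
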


Toda's equation in combination with Lemma \ref{Lemma_Relations_NN} immediateley implies Theorem \ref{Theorem_Clocal}
by extracting the $v^{[C]}$ coefficient from \eqref{TodaEqn}.

\begin{proof}[Proof of Theorem \ref{Theorem_Toda_equation}]
If $\CM^{\text{ss}}$ is a moduli space
of semistable sheaves on $X$ of slope $m$ with respect to $\mu$,
then the pullback $\CM^{\text{ss}} \times_{\CA_C} \CA$ is set-theoretically the
moduli space of semistable sheaves of the same slope $m$ on $\CA_C$ with respect to $\mu_C$.
Hence, the claim follows from exactly the argument of Bridgeland in \cite[Section 7]{Br1}
by applying the ring homomorphism $\iota^{\ast}$ (resp. restricting to $\CA_C$) in every step.
\end{proof}

\subsection{Proof of Theorem \ref{Theorem_applications}}
Let $X = S \times E$ and let $(\beta,d) \in H_2(X, \BZ)$ be a curve class with $\beta \neq 0$.
The Hilbert-Chow morphism is equivariant
with respect to the translation by $E$
and hence descends to a map of (the underlying topological spaces of) quotient stacks
\[ \pi : P_n(X, (\beta,d)) / E  \to \Chow_{(\beta,d)}(X)/E \,. \]
By pushing-forward via $\pi$ we have
\[ \mathsf{N}_{n,(\beta,d)}^{X} =
\int_{P_n(X, (\beta,d)) / E} \nu \, \dd{e} \\
= \int_{\Chow_{(\beta,d)}(X) / E} \pi_{\ast}(\nu) \, \dd{e},
\]
where the constructible function $\pi_{\ast}(\nu)$ is defined by
\[ \pi_{\ast}(\nu)([C]) = \int_{\pi_{-1}([C])} \nu \dd{e}. \]
Since the (reduced) fiber of $\pi$ over every point
$[ C ]$ corresponding to a Cohen-Macaulay curve $C \subset X$
is $P_n(X,C)$, we have
\[ \pi_{\ast}(\nu)([C]) = \int_{P_n(X, C)} \nu \dd{e} = \mathsf{P}_{n,C}. \]
% For every point $x_C : \Spec \BC \to \Chow_{(\beta,d)}(X)$ corresponding to a curve $C \subset X$
% we have the commutative diagram of fiber squares
% \[
% \begin{tikzcd}
% P_n(X,C) \ar{d} \ar{r} & P_n(X, (\beta,d)) \ar{d} \ar{r} & P_n(X, (\beta,d))/E \ar{d} \\
% \Spec \BC \ar{r}{x_C}       & \Chow_{(\beta,d)}(X) \ar{r}{q}        & \Chow_{(\beta,d)}(X)/E.
% \end{tikzcd}
% \]
% Hence the fiber of $\pi$ over the point $q \circ x_C$, which corresponds to the equivalence class of $C$ under translation by $E$,
% is isomorphic to $P_n(X,C)$ as well.
Hence Theorem \ref{Theorem_applications} (i) and (ii) follow directly from Theorem \ref{Theorem_Clocal} (i),(ii) by integration.

We prove (iii). By Corollary \ref{Corollary_Deformation_invariance}, we may take $\beta$ to be irreducible.
By the previous argument it is enough to show
\[
\mathsf{PT}_{[C]}(q) 
=
\sum_{n \in \BZ} \mathsf{P}_{n,C} q^n
=
\sum_{g=0}^{m} \mathsf{n}_{g, C} (q^{1/2} + q^{-1/2})^{2g-2}
\]
for some $\mathsf{n}_{g, C} \in \BQ$,
for every Cohen-Macaulay curve $C \subset X$ in class $(\beta,d)$.

% where we used the notation from Section \ref{Section:Rationality}.

Let $C_0, \dots, C_N$ be the irreducible components of $C$ where
$C_0$ is of class $(\beta,d_0)$ and the components $C_1, \dots, C_N$ have class $(0,1)$ in $H_2(X,\BZ)$.
Hence
\[ [C] = [C_0] + \sum_{i=1}^N d_i [C_i] \  \in H_2(C,\BZ) \]
where $\sum_{i=0}^{N} d_i = d$.
%We write also $(d_0, \dots, d_N)$ for elements $\sum_i d_i [C_i] \in H_2(C,\BZ)$.
Since $C_1, \dots, C_N$ are of geometric genus $1$
the $C$-local DT invariant satisfies
\[ \mathsf{N}_{\Sigma_i e_i [C_i], n} = 0 \]
whenever $e_i > 0$ for some $i \geq 1$ by Lemma \ref{Lemma_Relations_NN}.
The $v^{[C]}$-coefficient of \eqref{TodaEqn} therefore reads
\[
\mathsf{PT}_{[C]}(-q) = \bigg( \sum_{n \geq 0} n \, \mathsf{N}_{[C_0], n} q^n \bigg) \cdot L_{[C]-[C_0]}(q) + L_{[C]}(q).
\]
Since $C_0$ has multiplicity $1$ we have by Lemma \ref{Lemma_Relations_NN}
\[
\sum_{n \geq 0} n \, \mathsf{N}_{[C_0], n} q^n = \sum_{n \geq 0} n \, \mathsf{N}_{[C_0], 1} q^n = \mathsf{N}_{[C_0],1} \frac{q}{(1-q)^2}.
\]
and therefore
\[
\mathsf{PT}_{[C]}(q) = \mathsf{N}_{[C_0],1} \frac{q}{(1+q)^2} \cdot L_{[C]-[C_0]}(-q) + L_{[C]}(-q).
\]
As Laurent polynomials invariant under $q \mapsto q^{-1}$
we may expand every $L_{\gamma}(-q)$ in terms of $(q^{1/2} + q^{-1/2})^{2 k}$ for $k \in \BZ_{\geq 0}$.
By integration we conclude that $\mathsf{PT}_{(\beta,d}(q)$ has the desired form with $\mathsf{n}_{g, (\beta,d)} \in \BQ$.
Since the $E$-action on the moduli space has no stabilizers for $\beta$ irreducible,
the series $\mathsf{PT}_{(\beta,d}(q)$ has integer coefficients.
By induction on $g$ we conclude $\mathsf{n}_{g, (\beta,d)} \in \BZ$. \qed

\appendix
\section{Fulton Chern class} \label{Appendix_Fulton_Chern_Class}
Let $X$ be a scheme with an embedding $\iota : X \to M$ into a non-singular ambient $M$.
The Fulton Chern class is defined by
\begin{equation} c_F(X) = c(T_{M|X}) \cap s(X,M) \label{FCC} \end{equation}
where $s(X,M)$ is the Segree class of $X$ in $M$.
The class \eqref{FCC} is independent of the choice of embedding \cite[4.2.6]{F}.

For lack of reference we give a proof of the following result.
\begin{lemma}
Let $X, X'$ be schemes which can be embedded into non-singular schemes,
and let $f : X \to X'$ be an \'etale morphism. Then 
\[ f^{\ast} c_F(X) = c_F(X') \,. \]
\end{lemma}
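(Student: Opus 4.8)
The plan is to reduce the statement to a comparison of Segre classes and tangent bundles by extending the étale map $f$ to an étale morphism of non-singular ambient schemes. Fix a closed embedding $i' : X' \hookrightarrow M'$ with $M'$ non-singular, and suppose one can complete it to a Cartesian square
\[
\begin{tikzcd}
X \ar{r}{i} \ar{d}{f} & M \ar{d}{g} \\
X' \ar{r}{i'} & M'
\end{tikzcd}
\]
in which $g$ is étale, $i$ is a closed embedding, and $M$ is non-singular (being étale over the non-singular $M'$). Granting such a square the argument is short. Since $g$ is étale we have $\Omega_{M/M'} = 0$, hence $T_M = g^{\ast} T_{M'}$ and therefore $T_M|_X = f^{\ast}(T_{M'}|_{X'})$. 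Since the square is Cartesian and $f$ is flat, the normal cone commutes with base change, $C_X M \cong f^{\ast} C_{X'} M'$, so that $s(X,M) = f^{\ast} s(X', M')$. Combining these with the compatibility of flat pullback with the cap product of a Chern class operator against a cycle class \cite{F} gives
\[
c_F(X) = c(T_M|_X) \cap s(X,M) = f^{\ast}\big( c(T_{M'}|_{X'}) \big) \cap f^{\ast} s(X', M') = f^{\ast} c_F(X'),
\]
which is the desired compatibility of the Fulton Chern class with étale pullback.

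The main obstacle is the construction of the Cartesian square, i.e. lifting the étale morphism $f$ to an étale morphism of the ambient schemes. Étale-locally this is routine: after passing to affines $X' = \Spec A'$, $M' = \Spec R'$ with $A' = R'/I'$, and shrinking so that $X \to X'$ is standard étale, $A = (A'[t]/(h))_{h'}$, one lifts $h$ to a monic $\widetilde{h} \in R'[t]$ and sets $R = (R'[t]/(\widetilde{h}))_{\widetilde{h}'}$; then $g : \Spec R \to M'$ is standard étale and $\Spec R \times_{M'} X' \cong X$. The delicate point is that over a non-henselian $M'$ these local models need not patch to a single global ambient $M$, since an isomorphism between two such lifts over an overlap is unique but need not exist away from $X'$. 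I expect this gluing subtlety, rather than the Chern-class bookkeeping, to be where the real work lies. In the case relevant to Proposition~\ref{Proposition_formula_for_vir_class}, where $f$ is finite étale, this is benign; for the general statement it must be handled with more care.

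The cleanest way to organize the globalization is to phrase it through the intrinsic normal cone, which bypasses any choice of ambient $M$. The point is that the two right-hand sides $f^{\ast} s(X', M')$ and $f^{\ast}(T_{M'}|_{X'})$ are defined directly from the flat morphism $f$, independently of a lift, and that for an étale $f$ one has $\BL_{X/X'} = 0$, so the intrinsic normal cone is compatible with étale pullback. One can therefore verify the two identifications $T_M|_X = f^{\ast}(T_{M'}|_{X'})$ and $s(X,M) = f^{\ast} s(X', M')$ using the local lifts constructed above, where they reduce to the flat base-change statement for normal cones, while the equality of the assembled global classes is governed by the intrinsic cone and the embedding-independence of $c_F$ \cite[4.2.6]{F}. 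Once the normal-cone base-change identity is in hand, the remaining verification of the flat-pullback compatibilities is entirely routine.
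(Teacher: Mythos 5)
Your opening computation is correct but entirely conditional on the existence of a Cartesian square in which $f$ extends to an \'etale morphism $g$ of non-singular ambient schemes, and this is precisely where the argument fails: such a global lift does not exist in general, as you yourself observe. The proposed repairs do not close the gap. The standard-\'etale local lifts do not glue, and one cannot deduce an equality of classes in $A_{*}(X)$ from equalities over the members of a Zariski or \'etale cover, since restriction to a cover is not injective on Chow groups (already $A_0(\BP^1) \to A_0(\BA^1) \oplus A_0(\BA^1)$ kills the point class); so ``verifying the two identifications using the local lifts'' cannot produce the global identity. The intrinsic-normal-cone reformulation would indeed make $c_F$ manifestly \'etale-local, but making that precise requires defining Segre classes of cone stacks and proving their compatibility with \'etale pullback -- machinery far beyond \cite[4.2.6]{F}, none of which is carried out in your sketch. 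Even the finite \'etale case needed for Proposition~\ref{Proposition_formula_for_vir_class} is not ``benign'': extending a finite \'etale cover of $X'$ to an \'etale neighborhood of $X'$ inside $M'$ requires an approximation-type argument, not just the standard \'etale local form.

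The paper's proof sidesteps the lifting problem entirely, and this is the idea your proposal is missing: one does not need an \'etale lift of $f$, only a \emph{smooth} one, and a smooth lift always exists by the graph trick. In the paper's orientation (there the \'etale map runs $f : X' \to X$; the direction stated in the Lemma is a typo, which you silently corrected the other way), one replaces $\iota' : X' \to M'$ by $(\iota \circ f, \iota') : X' \to M \times M'$, so that the projection $\rho : M \times M' \to M$ is smooth and satisfies $\rho \circ \iota' = \iota \circ f$. Setting $Y = X \times_M M'$, the induced map $j = (f,\iota') : X' \to Y$ is a \emph{regular embedding} with normal bundle $N_{X'/Y} = j^{*} T_{\rho'}$ (this uses $f$ \'etale and $\rho'$ smooth), and the exact sequence of cones $0 \to N_{X'/Y} \to C_{X'/M'} \to j^{*}C_{Y/M'} \to 0$, combined with $s(Y,M') = \rho'^{*} s(X,M)$ (flat base change of normal cones -- the only place where your base-change observation enters), yields
\[
f^{*} c_F(X) \,=\, \big( c(T_{M'}|_{X'})\, c(N_{X'/Y})^{-1} \big) \cap c(N_{X'/Y}) \cap s(X',M') \,=\, c_F(X')\,.
\]
The excess tangent directions introduced by the smooth, non-\'etale lift are cancelled by the Whitney formula; this cancellation is what replaces your unavailable identification $T_M = g^{*}T_{M'}$, and it makes the argument unconditional for arbitrary \'etale $f$.
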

\begin{proof}
Let $\iota : X \to M$ and $\iota': X' \to M'$ be ambeddings into nonsingular schemes $M, M'$.
By replacing $\iota'$ with $(\iota \circ f, \iota'): X \to M' \times M$ we may assume there exist a smooth morphism $\rho : M' \to M$.
Let $Y = X \times_M M'$ be the fiber product. We have the commutative diagram
\[
\begin{tikzcd}
X' \ar{r}{j} \ar{dr}{f} & Y \ar{r}{\iota'} \ar{d}{\rho'} & M' \ar{d}{\rho} \\
              & X \ar{r}{\iota} & M
\end{tikzcd}
\]
where $j = (f, \iota')$ and $\rho'$ is the induced map. Since $f$ is \'etale and $\rho'$ is smooth,
$j$ is a regular embedding with normal bundle $N_{X'/Y} = j^{\ast} T_{\rho'} = j^{\ast} \iota^{\prime \ast} T_{\rho}$
where $T_{\rho}$ and $T_{\rho'}$ is the relative tangent bundle to $\rho$ and $\rho'$ respectively \cite[B.7.2]{F}.

Consider the composition $X' \xrightarrow{j} Y \xrightarrow{\iota'} M'$.
By an argument identical to \cite[4.2.6]{F} %with $j, \iota'$ instead of $k, \iota'$
there exist an exact sequence $0 \to N_{X'/Y} \to C_{X'/M'} \to j^{\ast} C_{Y/M'} \to 0$
where $C_{X'/M'}$ denotes the cone of $X'$ in $M'$ and similar for $C_{Y/M'}$. Hence
we have $j^{\ast} s(C_{Y/M'}) = c(N_{X'/Y}) \cap s(X', M')$ \cite[4.1.6]{F}. We conclude
\begin{align*}
f^{\ast} c_F(X)
& = j^{\ast} \iota^{\prime \ast} \big( c(T_{M'}) / c(T_{\rho}) \big) \cap j^{\ast} \rho^{\prime \ast} s(X,M) \\
& = \big( c(T_{M'|X'}) \cdot c(N_{X'/Y})^{-1} \big) \cap j^{\ast} s(Y,M') \quad \quad (\text{since } \rho \text{ is flat}) \\
& = \big( c(T_{M'|X'}) \cdot c(N_{X'/Y})^{-1} \big) \cap c(N_{X'/Y}) \cap s(X', M') \\
& = c_F(X'). \qedhere
\end{align*}
\end{proof}

\end{document}